\tikzset{curve/.style={settings={#1},to path={(\tikztostart)
    .. controls ($(\tikztostart)!\pv{pos}!(\tikztotarget)!\pv{height}!270:(\tikztotarget)$)
    and ($(\tikztostart)!1-\pv{pos}!(\tikztotarget)!\pv{height}!270:(\tikztotarget)$)
    .. (\tikztotarget)\tikztonodes}},
    settings/.code={\tikzset{quiver/.cd,#1}
        \def\pv##1{\pgfkeysvalueof{/tikz/quiver/##1}}},
    quiver/.cd,pos/.initial=0.35,height/.initial=0}
\tikzset{tail reversed/.code={\pgfsetarrowsstart{tikzcd to}}}
\tikzset{2tail/.code={\pgfsetarrowsstart{Implies[reversed]}}}
\tikzset{2tail reversed/.code={\pgfsetarrowsstart{Implies}}}
\tikzset{no body/.style={/tikz/dash pattern=on 0 off 1mm}}
\numberwithin{equation}{section} 
\numberwithin{table}{section}
\theoremstyle{plain}
\newtheorem{theorem}{Theorem}[section]
\newtheorem{proposition}[theorem]{Proposition}
\newtheorem{lemma}[theorem]{Lemma}
\newtheorem{corollary}[theorem]{Corollary}
\theoremstyle{definition}
\newtheorem{definition}[theorem]{Definition}
\newtheorem{nota}[theorem]{Notation}
\newtheorem{assume}[theorem]{Assumption}
\theoremstyle{remark}
\newtheorem{remark}[theorem]{Remark}
\newcommand{\HH}{\mathrm{H}}
\begin{document} 

\title{Holomorphic differentials of alternating four covers}

\author{Frauke M. Bleher}
\address{F.B.: Department of Mathematics\\University of Iowa\\
14 MacLean Hall\\Iowa City, IA 52242-1419\\ U.S.A.}
\email{frauke-bleher@uiowa.edu}

\author{Margarita Bustos Gonzalez}
\address{M.B.G.: School of Mathematical and Statistical Sciences - 1804\\Arizona State University\\
734 W Alameda Drive\\Tempe AZ 85282\\ U.S.A.}

\thanks{Both authors were supported in part by NSF Grant No.\ DMS-1801328. The first author was also supported in part by Simons Foundation grant No.\ 960170 and NSF FRG Grant No.\ DMS-2411703. Frauke M. Bleher is the corresponding author.}

\date{October 17, 2025}

\subjclass[2010]{Primary 11G20; Secondary 20C20, 14H05, 14G17}

\begin{abstract}
Suppose $k$ is an algebraically closed field of characteristic two, let $A_4$ be an alternating group on four letters, and let $H$ be the unique Sylow two-subgroup of $A_4$. Let $X$ be a smooth projective irreducible curve over $k$ with a faithful $A_4$-action such that the quotient curve $X/H$ is a projective line and the $H$-cover  $X\to X/H$ is totally ramified, in the sense that it is ramified and every branch point is totally ramified. Under these assumptions, we determine the precise $kA_4$-module structure of the space of holomorphic differentials of $X$ over $k$. We show that there are infinitely many different isomorphism classes of indecomposable $kA_4$-modules that can occur as direct summands, and we give precise formulas for the multiplicities with which they occur. 
\end{abstract}

\maketitle


\section{Introduction}
\label{s:intro}

Let $k$ be an algebraically closed field, let $X$ be a smooth projective curve over $k$, and let $\HH^0(X,\Omega_X)$ be the space of holomorphic differentials of $X$ over $k$. Suppose $G$ is a finite group acting faithfully on $X$, which makes $\HH^0(X,\Omega_X)$ into a $kG$-module. It is a classical problem to determine the precise $kG$-module structure of $\HH^0(X,\Omega_X)$. This problem was first posed in 1928 by Hecke in \cite{Hecke1928}. In 1934, Chevalley and Weil solved Hecke's problem in \cite{ChevalleyWeil1934} when the characteristic of $k$ is either zero or a prime number that does not divide $\# G$. However, when the characteristic of $k$ does divide $\# G$, Hecke's problem has not been solved in general.

From now on, we will assume that the characteristic of $k$ is a prime number $p$ that divides $\#G$. Many authors have made some progress in this case, by either placing conditions on the group $G$ or on the ramification data of the cover $X\to X/G$. We refer to \cite{ValentiniMadan1981,Kani1986,Nakajima1986,RzedowskiCVillaSMadan1996,Kock2004,KaranikolopoulosKontogeorgis2013,MarquesWard2018,BleherChinburgKontogeorgis2020,Garnek2022,BleherCamacho2023,Kock2024} for a sample of previous results. 

The goal of this paper is to study the case when the characteristic of $k$ is two and $G$ is the alternating group $A_4$ on four letters. This group is a semidirect product with normal subgroup given by the unique Sylow $2$-subgroup $H\cong \mathbb{Z}/2\times\mathbb{Z}/2$ of $G$ on which a cyclic group $C\cong\mathbb{Z}/3$ faithfully acts. We assume that the quotient curve $X/H$ is the projective line over $k$, that the cover $X\to X/H$  is ramified and that all branch points are totally ramified (see Assumption \ref{ass:general}). In particular, this includes all so-called Harbater-Katz-Gabber $A_4$-covers over $k$ (see Definition \ref{def:HKGcovers} and Lemma \ref{lem:HKGcovers}). Under these assumptions, we determine the precise $kG$-module structure of $\HH^0(X,\Omega_X)$, by describing all indecomposable $kG$-modules that occur as direct summands together with their multiplicities (see Theorem \ref{thm:main} and Corollary \ref{cor:main}).

This paper, together with \cite{BleherCamacho2023}, should be seen as the first step in a program concerning the precise $kG$-module structure of $\HH^0(X,\Omega_X)$ when the group $G$ acting on $X$ has tame representation type over $k$, i.e. when the characteristic of $k$ is $2$ and the Sylow $2$-subgroups of $G$ are either dihedral, semidihedral or generalized quaternion (see \cite{BondarenkoDrozd1977, Brenner1970, Higman1954}). 
Because of the Conlon induction theorem \cite[Thm. (80.51)]{CurtisReiner}, any $kG$-module is uniquely determined by its restrictions to all so-called 2-hypo-elementary subgroups of $G$. These are precisely the subgroups of $G$ that are semidirect products with normal $2$-subgroups on which cyclic groups of odd order act. 

For this reason, we will assume now that $G$ is such a 2-hypo-elementary group with normal Sylow $2$-subgroup $H$ and corresponding quotient group $G/H=\langle \rho H\rangle$ for an element $\rho\in G$ having odd order.
As seen in \cite{BleherCamacho2023}, unlike the case of finite representation type studied in \cite{BleherChinburgKontogeorgis2020}, in the tame case the ramification data of the cover $X\to X/G$, consisting of the lower ramification groups together with the fundamental characters of the closed points of $X$, does in general not fully determine the $kG$-module structure of $\HH^0(X,\Omega_X)$. 

Therefore, we focus now on the simplest case when the quotient curve $X/H$ is a projective line, which we will assume from now on.
The first goal is to determine the $kH$-module structure of $\HH^0(X,\Omega_X)$. For example, if $H\cong \mathbb{Z}/2\times \mathbb{Z}/2$, then it was shown in \cite[Thm. 1.2]{BleherCamacho2023} that the lower ramification groups of the cover $X\to X/H=\mathbb{P}^1_k$ do in fact determine the $kH$-module structure of $\HH^0(X,\Omega_X)$.
The second goal is then to answer the question what other knowledge beyond the restriction to $H$ determines the full $kG$-module structure of $\HH^0(X,\Omega_X)$. In the case of finite representation type studied in \cite{BleherChinburgKontogeorgis2020}, the radical layers of $\HH^0(X,\Omega_X)$ with respect to the action of $H$ together with the action of $G/H$ on these layers determine the full $kG$-module structure. However, this approach does not work directly in the tame case, since the radical layers of the indecomposable $kH$-modules are not all one-dimensional over $k$. 

For this reason, it makes sense to first consider cases in which each indecomposable $kG$-module can be decomposed explicitly into a direct sum of indecomposable $kH$-modules and the action of $\rho$ on these indecomposable $kH$-module summands can be used to determine precisely the indecomposable $kG$-module restricting to their direct sum. 
The smallest example of this kind with $[G:H]>1$ is the case when $G\cong A_4$, which is the subject of this paper.

We obtain the following result (see Theorem \ref{thm:main} and Corollary \ref{cor:main} for the precise details):

\begin{theorem}
\label{thm:fakemain}
Let $G$ be an alternating four group with unique Klein four Sylow $2$-subgroup $H$.
Suppose $X\to X/G$ is a $G$-cover of smooth projective curves such that the $H$-subcover $X\to X/H$ is totally ramified and $X/H=\mathbb{P}^1_k$. Then the $kG$-module structure of $\HH^0(X,\Omega_{X/k})$ is fully determined by
\begin{itemize}
\item[(i)] the lower ramification groups of the closed points of $X$ that ramify in the cover $X\to X/H$, and
\item[(ii)] the action of an element $\rho\in G$ of order $3$ on the uniformizers of the branch points of the cover $X\to X/H$.
\end{itemize}
More precisely, the isomorphism classes of the indecomposable $kG$-modules that actually occur as direct summands of $\HH^0(X,\Omega_{X/k})$ together with their multiplicites can be described explicitly. Furthermore, the list of these isomorphism classes is infinite and contains indecomposable $kG$-modules of arbitrarily large finite $k$-dimension.
\end{theorem}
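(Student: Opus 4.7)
The strategy is to first fix the restricted $kH$-module structure of $\HH^0(X,\Omega_X)$ and then promote it to the full $kG$-module structure by tracking the action of an order-three element $\rho$. By \cite[Thm.~1.2]{BleherCamacho2023}, the restriction $\HH^0(X,\Omega_X)|_H$ is pinned down as an explicit direct sum of indecomposable $kH$-modules, whose multiplicities depend only on the lower ramification data in (i). The task therefore reduces to understanding how the normalizer action of $\rho$ glues these $kH$-summands together into indecomposable $kA_4$-modules.

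To organize this, I would partition the branch points $\bar{P}_1,\ldots,\bar{P}_r\in\mathbb{P}^1_k$ of $X\to X/H$ into those in $\rho$-orbits of size three and those fixed by $\rho$. For an orbit $\{\bar{P},\rho\bar{P},\rho^2\bar{P}\}$ of size three, the three preimages in $X$ are permuted cyclically by $\rho$, so by a Mackey-type argument the local contribution to $\HH^0(X,\Omega_X)$ is forced to have the form $\mathrm{Ind}_{H}^{G}(M_{\bar{P}})$, where $M_{\bar{P}}$ is the $kH$-summand at $\bar{P}$ from \cite{BleherCamacho2023}; here ingredient (ii) carries no additional information. For a branch point $\bar{P}$ fixed by $\rho$, let $P\in X$ be its unique preimage and choose a uniformizer $t$ at $P$, so that $\rho$ acts on $t$ modulo $t^2$ by a primitive cube root of unity $\zeta\in k$, which is precisely the datum (ii). Using a Boseck-style basis for $\HH^0(X,\Omega_X)$ built from local expansions of the form $t^{-n}\,dt$ adapted to the lower ramification filtration, one can write the local contribution at $P$ block by block as a $kG$-module, and then identify each block against the classification of indecomposable $kA_4$-modules in tame representation type by comparing its $kH$-restriction to the cyclic action of $\rho$ on the three nontrivial elements of $H$.

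The main obstacle is this matching step, which requires a careful combinatorial bookkeeping to show that each block is indecomposable over $kG$ and to pin down exactly which band module of $kA_4$ it represents, as a function of the ramification jumps and the scalar $\zeta$. Summing the block contributions over all $\rho$-fixed branch points and combining with the induced contributions from the size-three orbits then yields the explicit multiplicity formulas claimed in Theorem~\ref{thm:main}. For the last assertion, note that as the ramification jump at a single $\rho$-fixed branch point grows without bound, the associated block produces an indecomposable $kG$-summand of strictly larger $k$-dimension; choosing, for instance, suitable Harbater-Katz-Gabber $A_4$-covers with unbounded conductors (as permitted by Assumption~\ref{ass:general}) exhibits infinitely many non-isomorphic indecomposable $kA_4$-modules of arbitrarily large finite dimension among the direct summands of $\HH^0(X,\Omega_{X/k})$.
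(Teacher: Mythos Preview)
Your outline matches the paper's strategy closely: decompose $\HH^0(X,\Omega_X)$ as a $kH$-module via \cite{BleherCamacho2023}, partition $Y_{\mathrm{br}}$ into $\rho$-fixed points and size-three $\rho$-orbits, handle the orbits by induction, and analyze the fixed points through the $\rho$-eigenvalue action on an explicit Boseck-type basis.

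Two points of imprecision are worth flagging. First, you have the band/string dichotomy reversed. At the $\rho$-fixed branch points (which the paper shows lie in $\{0,\infty\}\subset\mathbb{P}^1_k$) the local contributions are \emph{string} modules $N_{2n,\ast,i}$, $M_{3,1,i}$, $S_i$, with the subscript $i\in\{0,1,2\}$ recording the $\rho$-eigenvalue on the socle; the relevant computation is $\rho(\pi_y^{-a}\,ds)=\zeta^{1-\epsilon_y a}\pi_y^{-a}\,ds$ on the uniformizer at $y$ in $Y$, not at the point upstairs in $X$. The \emph{band} modules $B_{6n,\mu}$ arise from the size-three orbits, because $\mathrm{Ind}_H^G N_{2n,\phi}^{(C,D)}\cong B_{6n,\phi^3}$ when $\phi\in k^\times$ (Lemma~\ref{lem:restrictA4toK4}). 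Second, your claim that ingredient (ii) ``carries no additional information'' for the size-three orbits is not quite right: the induced module depends on \emph{which} $kH$-module $N_{2n,\lambda}$ one is inducing, and the parameter $\lambda$ (equivalently $\phi$ in Notation~\ref{not:organizebranch}) is a local invariant at the branch point that is not visible from the ramification filtration alone when $m_y=M_y$. The paper extracts it from the leading coefficient of the Artin--Schreier datum $\theta_y$; see Lemma~\ref{lem:valuesbranchpoints}(b).

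Finally, for the last assertion the paper does more than let a single jump grow: Proposition~\ref{prop:list} gives the complete list of isomorphism classes that occur, and three separate families of examples (\S\ref{ss:ex1}--\S\ref{ss:ex3}) are needed to realize all string modules $N_{2n,\ast,i}$, all $M_{3,1,i}$ and $S_i$, and all band modules $B_{6n,\mu}$ for every $\mu\in k^\times$. Your Harbater--Katz--Gabber argument handles unbounded dimension but not the full list.
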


This paper is organized as follows. 

In \S~\ref{s:prelim}, we first recall some definitions and results in characteristic two on Klein four covers, using Artin-Schreier theory and \cite{BleherCamacho2023}, and on cyclic degree three covers, using Kummer theory. We then prove Theorem \ref{thm:GaloisA4}, which classifies the precise structure of alternating four covers in characteristic two. 
In \S~\ref{s:holom}, we focus on alternating four covers $X\to X/G$ over an algebraically closed field $k$ of characteristic 2, i.e. $G\cong A_4$, satisfying the above assumptions (see Assumption \ref{ass:general}). We first show in Lemmas \ref{lem:branchpoints} and \ref{lem:valuesbranchpoints} how the action of the cyclic group $C =G/H \cong\mathbb{Z}/3$ on the Sylow 2-subgroup $H\cong \mathbb{Z}/2\times\mathbb{Z}/2$ of $G$ affects the ramification data of the Klein four cover $X\to X/H$ determined in \cite[Section 3]{BleherCamacho2023}. This allows us to restate \cite[Theorem 3.7]{BleherCamacho2023} in this setting. We then use this to prove our main result, Theorem \ref{thm:main}, which determines the precise $kG$-module structure of $\HH^0(X,\Omega_X)$. As a consequence, we obtain in Corollary \ref{cor:main} the precise $kG$-module structure of $\HH^0(X,\Omega_X)$ when $X$ is a Harbater-Katz-Gabber $A_4$-cover over $k$.
In \S~\ref{s:examples}, we give in Proposition \ref{prop:list} the complete (infinite) list of isomorphism classes of indecomposable $kG$-modules that actually occur as direct summands of $\HH^0(X,\Omega_X)$. We prove this result by giving examples that show that all the modules in this list occur. 
Finally, in an appendix \S~\ref{s:repV4A4}, we describe the indecomposable modules for $kH$ and $kG$, given by so-called string and band modules. We moreover show in Lemmas \ref{lem:bandconnection} and \ref{lem:restrictA4toK4} how these modules are related via induction and restriction.

Part of this paper is the Ph.D. thesis of the second author under the supervision of the first (see \cite{MargaritaThesis2025}). 


\section{Preliminaries}
\label{s:prelim}

In this section, we first recall in \S~\ref{ss:K4} and \S~\ref{ss:Z3} some definitions and results that will be needed for the remainder of the paper. Moreover, in \S~\ref{ss:A4} we prove Theorem \ref{thm:GaloisA4} about the structure of alternating four covers  in characteristic two.

Let $A_4$ denote the alternating group on 4 letters.
We will call a field extension $L/J$ an \emph{alternating four cover} if $L$ is Galois over $J$ and its Galois group is isomorphic to $A_4$.
If $J$ and $L$ are function fields of smooth projective curves, we will call the corresponding Galois cover of these curves also an \emph{alternating four cover}.

Throughout this paper, we make the following assumptions and use the following notations.

\begin{assume}
\label{ass:general}
Let $k$ be an algebraically closed field of characteristic 2, let $G \cong A_4$ be an alternating group on four letters, let $H< G$ be the unique Sylow two-subgroup of $G$, and let $C=G/H\cong\mathbb{Z}/3$. Suppose $\pi:X\to Z$ is an alternating four cover of smooth projective curves over $k$ with Galois group $G$. We write $\pi=\pi_2\circ\pi_1$ for two covers
$$\pi_1:X\to Y \quad\mbox{and}\quad \pi_2:Y\to Z$$
where $\pi_1:X\to Y$ is an $H$-cover and $\pi_2:Y\to Z$ is a $C$-cover. 
We assume that $\pi_1$ is totally ramified, in the sense that $\pi_1$ is ramified and that every branch point of $\pi_1$ is totally ramified. Moreover, we assume that $Y=\mathbb{P}^1_k$ is a projective line over $k$.
\end{assume}

\begin{nota}
\label{not:general}
Under Assumption \ref{ass:general}, we write
$$G=\langle\sigma,\rho\;:\;\sigma^2=\rho^3=(\sigma\rho)^3=1\rangle\cong A_4.$$
Moreover, defining
$$\tau := \rho\sigma\rho^{-1}\quad\mbox{and} \quad\overline{\rho} := \rho H,$$
we write
$$H=\langle\sigma,\tau \rangle\quad\mbox{and}\quad C=\langle \overline{\rho}\rangle.$$
\end{nota}


\subsection{Totally ramified Klein four covers in characteristic two}
\label{ss:K4}

\begin{remark} 
\label{rem:K4covers}
Let $L/K$ be a Galois extension of fields of characteristic two with Galois group $H$. Then $L^{\langle\sigma\rangle}$, $L^{\langle\tau\rangle}$ and $L^{\langle\sigma\tau\rangle}$ are the three intermediate fields of degree two over $K$. By Artin-Schreier theory (see \cite[Proposition 3.7.8]{Stichtenoth2009}), there exist $u,v\in L$ such that 
$$L^{\langle\sigma\rangle}=K(u) \quad \mbox{and} \quad  L^{\langle\tau\rangle}=K(v), $$
where
$$u^2-u=\alpha\in K \quad \mbox{and} \quad v^2-v=\beta\in K, $$ and none of $\alpha,\beta$ can be written as $\xi^2-\xi$ for any $\xi\in K$. Therefore,
$$\begin{array}{rccrcc}
\sigma u&=&u, & \tau u&=&u+1, \\
\sigma v&=&v+1, & \tau v&=&v .
\end{array}$$
In particular, this implies that 
$$L=K(u,v).$$
Moreover, $K(u+v)=L^{\langle \sigma\tau\rangle}$ and $(u+v)^2-(u+v) = \alpha+\beta\in K$, where $\alpha+\beta \ne \xi^2-\xi$ for any $\xi\in K$.
\end{remark}

The next remark is a direct consequence of \cite[Remark 2.7]{BleherCamacho2023}.

\begin{remark} 
\label{rem:BC2023}
Let $\pi_1:X\to Y$ be as in  Assumption \ref{ass:general}. Defining $K:=k(Y)$ and $L:=k(X)$, we have a Galois extension $L/K$ with Galois group $H$. We let $u,v,\alpha,\beta$ be as in Remark \ref{rem:K4covers}. 

Let $y\in Y$ be a branch point of $\pi_1$ and let $x\in X$ be the point above $y$. There exist positive odd integers $m_y\leq M_y$ such that the lower ramification groups at $x$ are given by 
$$H=H_{x,0}= H_{x,1}=\dots= H_{x,m_y}>H_{x,m_y+1}=\dots=H_{x,m_y+2(M_y-m_y)}>H_{x,m_y+2(M_y-m_y)+1}=1.$$
Furthermore, for $\gamma\in\{\alpha,\beta,\alpha+\beta\}$, there exists $h_{\gamma,y}\in K$ such that either
\begin{enumerate}
\item[(i)] $m_y=M_y$ and $\operatorname{ord}_y(\gamma-(h_{\gamma,y}^2-h_{\gamma,y}))=-m_y$ for every $\gamma$,   or
\item[(ii)] $m_y<M_y$ and either
\begin{enumerate}
\item[(a)] $\operatorname{ord}_y(\alpha-(h_{\alpha,y}^2-h_{\alpha,y}))=-m_y$ and $\operatorname{ord}_y(\beta-(h_{\beta,y}^2-h_{\beta,y}))=-M_y=\operatorname{ord}_y(\alpha+\beta-(h_{\alpha+\beta,y}^2-h_{\alpha+\beta,y})),$ or
\item[(b)] $\operatorname{ord}_y(\beta-(h_{\beta,y}^2-h_{\beta,y}))=-m_y$ and $\operatorname{ord}_y(\alpha-(h_{\alpha,y}^2-h_{\alpha,y}))=-M_y=\operatorname{ord}_y(\alpha+\beta-(h_{\alpha+\beta,y}^2-h_{\alpha+\beta,y})),$ or
\item[(c)] $\operatorname{ord}_y(\alpha+\beta-(h_{\alpha+\beta,y}^2-h_{\alpha+\beta,y}))=-m_y$ and $\operatorname{ord}_y(\alpha-(h_{\alpha,y}^2-h_{\alpha,y}))=-M_y=\operatorname{ord}_y(\beta-(h_{\beta,y}^2-h_{\beta,y})).$
\end{enumerate}
\end{enumerate}
Define 
$$\left\{\begin{array}{rcccrccl}
u_y &=& u-h_{\alpha,y}&\mbox{and}& \alpha_y &=& \alpha-(h_{\alpha,y}^2-h_{\alpha,y}) & \mbox{in the cases  (i)  or  (ii)(a)},\\
u_y &=& v -h_{\beta,y} &\mbox{and}& \alpha_y &=& \beta-(h_{\beta,y}^2-h_{\beta,y}) & \mbox{in the case  (ii)(b)},\\
u_y &=& u+v - h_{\alpha+\beta,y}  &\mbox{and}& \alpha_y &=& \alpha+\beta-(h_{\alpha+\beta,y}^2-h_{\alpha+\beta,y}) & \mbox{in the case  (ii)(c)},
\end{array}\right.$$
and define
$$\left\{\begin{array}{rcccrccl}
v_y &=& v-h_{\beta,y} & \mbox{and}&\beta_y&=&\beta-(h_{\beta,y}^2-h_{\beta,y}) & \mbox{in the cases  (i)  or  (ii)(a)  or  (ii)(c)},\\
v_y &=&u-h_{\alpha,y} & \mbox{and} &\beta_y&=&\alpha-(h_{\alpha,y}^2-h_{\alpha,yx}) & \mbox{in the case  (ii)(b)}.
\end{array}\right.$$
Then
$$\operatorname{ord}_x(u_y)=-2m_y \quad \mbox{and}  \quad \operatorname{ord}_x(v_y)=-2M_y.$$
Moreover, there exist $\theta_y,\eta_y\in K$
such that $w_y=v_y+\theta_yu_y+\eta_y$ is an Artin-Schreier generator of $L$ over $K(u_y)$ with the following properties:
$$\operatorname{ord}_x(w_y)=-m_y-2(M_y-m_y),\quad \operatorname{ord}_x(\theta_y)=-2(M_y-m_y),$$
and
$$\left\{\begin{array}{rcccrccl}
(\sigma -1)w_y&=&1 & \mbox{and} &(\tau-1)w_y&=&\theta_y & \mbox{in the cases (i) or (ii)(a)}, \\
(\tau -1)w_y&=&1 & \mbox{and} &(\sigma-1)w_y&=&\theta_y & \mbox{in the case (ii)(b)}, \\
(\sigma\tau-1)w_y&=&1 & \mbox{and} & (\tau-1)w_y&=&\theta_y& \mbox{in the case (ii)(c)}.
\end{array}\right.$$
\end{remark}

\begin{nota} 
\label{not:uniform}
Assume the notation from Remark \ref{rem:BC2023}, and let $\pi_y$ be a uniformizer at $y$. We can write $\alpha_y,\beta_y,\theta_y,\eta_y$ as Laurent series of $k[[\pi_y,\pi_y^{-1}]]$ as follows, where we use that $\mathrm{ord}_y(\eta_y)\ge (-M_y+1)/2$ by \cite[Lemma 2.8]{BleherCamacho2023}:
\begin{eqnarray*}
\alpha_y &=& \pi_y^{-m_y}\sum_{i\geq0}\alpha_{y,i}\pi_y^i,\\
\beta_y &=& \pi_y^{-M_y}\sum_{i\geq0}\beta_{y,i}\pi_y^i,\\
\theta_y &=& \pi_y^{-(M_y-m_y)/2}\sum_{i\geq0}\theta_{y,i}\pi_{y}^i,\\
\eta_y &=& \pi_y^{(-M_y+1)/2}\sum_{i\geq0}\eta_{y,i}\pi_y^i.
\end{eqnarray*}
For $j\geq0$, we define
\begin{eqnarray*}
\theta_y(j) &:=& \pi_y^{-(M_y-m_y)/2}\left(\sum_{i=0}^j\theta_{y,i}\pi_y^i\right),\\[1ex]
\widetilde{\eta}_y &:= &\pi_y^{(-M_y+1)/2}\left(\sum_{i=0}^{\big\lfloor\frac{2m_y+3}{4}\big\rfloor-\big\lfloor\frac{m_y+3}{4}\big\rfloor-1}\eta_{y,i}\pi_y^i\right),\\[1ex]
w_y(j) &:=&v_y+\theta_y(j)u_y+\widetilde{\eta}_y.
\end{eqnarray*}
\end{nota}

\begin{remark}
\label{rem:doweneedthis?}
By \cite[Lemma 2.8]{BleherCamacho2023}, we have $\alpha_{y,0},\beta_{y,0},\theta_{y,0}\in k^{\times}$. 
Morever, by \cite[Equation (2.8) in Lemma 2.8]{BleherCamacho2023}, 
$$\beta_{y,2i} + \sum_{i_1+i_2=i}\alpha_{y,2i_1}\theta_{y,i_2}^2=0\quad\mbox{for $0\le i\le \lfloor \frac{m_y}{4}\rfloor$}.$$
If $m_y=M_y$ then $\theta_{y,0}\in k-\{0,1\}$.
\end{remark}

\begin{nota} 
\label{not:deltalambda}
Assuming the above notation, we define the following:
\begin{enumerate}
\item[(i)] If we are in the situation of Remark \ref{rem:BC2023}(i), we define $\lambda_y:=\theta_{y,0}$ and
$$\delta_y:=\begin{cases}
\operatorname{ord}_y(\theta_y-\lambda_y) & \mbox{if } 1\leq\operatorname{ord}_y(\theta_y-\lambda_y)\leq\lfloor\frac{m_y}{4}\rfloor,\\
0 & \mbox{otherwise}.
\end{cases}$$
By \cite[Lemma 2.8]{BleherCamacho2023},  $\lambda_y\in k-\{0,1\}$. 
\item[(ii)] If we are in the situation of Remark \ref{rem:BC2023}(ii), we define $\delta_y:=-1$ and 
$$\lambda_y:=\begin{cases}
\infty & \mbox{if we are in the situation of Remark \ref{rem:BC2023}(ii)(a)},\\ 
 0 & \mbox{if we are in the situation of Remark \ref{rem:BC2023}(ii)(b)},\\ 
1 & \mbox{if we are in the situation of Remark \ref{rem:BC2023}(ii)(c)}.
\end{cases}$$
\end{enumerate}
\end{nota}

By Assumption \ref{ass:general}, $Y=\mathbb{P}^1_k$ is the projective line over $k$.
This case was studied in \cite[Section 3]{BleherCamacho2023} and the $kH$-module structure of $\HH^0(X,\Omega_X)$ was determined.
We now recall the notation and definitions from \cite[Section 3]{BleherCamacho2023} that will be important for this paper.
We write the function field 
$$K := k(Y ) = k(\mathbb{P}^1_k) =k(s)$$ for a variable $s$.
For $\mu\in k\cup\{\infty\}$, let $y_{\mu}$ be the corresponding closed point in $Y=\mathbb{P}_k^1$ with  uniformizer
\begin{equation}
\label{eq:P1uniformizer}
\pi_{y_{\mu}}=\begin{cases}
s^{-1} & \mbox{ if } \mu=\infty,\\
s-\mu & \mbox{ if } \mu\neq\infty.
\end{cases}
\end{equation}
We will also write $\infty$ instead of $y_{\infty}$.

Because of Assumption \ref{ass:general} and Remark \ref{rem:BC2023}, we can change the variable $s$, if necessary, to be able to make the following assumptions throughout this section (see \cite[Assumption 3.1 and Remark 3.2]{BleherCamacho2023}).

The finite set of branch points is given by 
\begin{equation}
\label{eq:Ybr}
Y_{\mathrm{br}}=\{\infty,y_1,\dots,y_n\}
\end{equation}
for some $n\ge 0$, where each $y_i$ corresponds to $s-\mu_i$ for $\mu_i\in k^{\times}$. For $\gamma\in\{\alpha,\beta,\alpha+\beta\}$, there exists a rational function $h_\gamma\in K=k(s)$ and
\begin{equation}
\label{eq:standardform}
\gamma-(h_\gamma^2-h_\gamma)=s^{p_{\gamma,\infty}+p_{\gamma,y_1}+\dots+p_{\gamma,y_n}-d_\gamma}(\mu_1^{p_{\gamma,y_1}}\dots\mu_n^{p_{\gamma,y_n}})\frac{c_{\gamma,d_\gamma}s^{d_\gamma}+\dots+c_{\gamma,1}s+c_{\gamma,0}}{(s-\mu_1)^{p_{\gamma,y_1}}\dots(s-\mu_n)^{p_{\gamma,y_n}}}
\end{equation}
where $p_{\gamma,\infty}\in\{m_\infty,M_\infty\}$, $p_{\gamma,y_i}\in\{m_{y_i},M_{y_i}\}$ for $i=1,\dots,n$ such that the following conditions are satisfied:
\begin{itemize}
\item $0\leq d_{\gamma}\leq p_{\gamma,\infty}+p_{\gamma,y_1}+\dots+p_{\gamma,y_n}$,
\item $c_{\gamma,d_\gamma},c_{\gamma,0}\in k^{\times}$, and 
\item $c_{\gamma,d_\gamma}s^{d_\gamma}+\dots+c_{\gamma,0}$ and $(s-\mu_1)^{p_{\gamma,y_1}}\dots(s-\mu_n)^{p_{\gamma,y_n}}$ are relatively prime in $k[s]$.
\end{itemize}

\begin{remark} 
\label{rem:important}
Let $u,v,\alpha,\beta$ be as in Remark \ref{rem:BC2023}, and let $h_\gamma$ be as in (\ref{eq:standardform}) for $\gamma\in\{\alpha,\beta,\alpha+\beta\}$.
Define
$$\begin{array}{rclrcl}
\widetilde{u}&:=&u-h_\alpha, \quad & \widetilde{\alpha}&:=&\alpha-(h_\alpha^2-h_\alpha),\\
\widetilde{v}&:=&v-h_\beta, \quad & \widetilde{\beta}&:=&\beta-(h_\beta^2-h_\beta),\\
\widetilde{u+v}&:=&u+v-h_{\alpha+\beta}, \quad & \widetilde{\alpha+\beta}&:=&\alpha+\beta-(h_{\alpha+\beta}^2-h_{\alpha+\beta}).
\end{array}$$
Fix $y\in Y_{\mathrm{br}}$. By \cite[Remark 3.3]{BleherCamacho2023}, we obtain the following for $u_y,v_y,\alpha_y,\beta_y$ and $m_y,M_y$ from Remark \ref{rem:BC2023}:
\begin{itemize}
\item If $p_{\alpha,y}=p_{\beta,y}=p_{\alpha+\beta,y}$ or $p_{\alpha,y} < p_{\beta,y}=p_{\alpha+\beta,y}$, then $m_y=p_{\alpha,y}$ and $M_y=p_{\beta,y}=p_{\alpha+\beta,y}$, and
$$u_y = \widetilde{u},\alpha_y=\widetilde{\alpha}, \quad v_y = \widetilde{v}, \beta_y=\widetilde{\beta}.$$
\item If $p_{\beta,y}<p_{\alpha,y}=p_{\alpha+\beta,y}$, then $m_y=p_{\beta,y}$ and $M_y=p_{\alpha,y}=p_{\alpha+\beta,y}$, and
$$u_y = \widetilde{v},\alpha_y=\widetilde{\beta}, \quad v_y = \widetilde{u}, \beta_y=\widetilde{\alpha}.$$
\item If $p_{\alpha+\beta,y}<p_{\alpha,y} = p_{\beta,y}$, then $m_y=p_{\alpha+\beta,y}$ and $M_y=p_{\alpha,y}=p_{\beta,y}$, and
$$u_y = \widetilde{u+v},\alpha_y=\widetilde{\alpha+\beta}, \quad v_y = \widetilde{v}, \beta_y=\widetilde{\beta}.$$
\end{itemize}
\end{remark}


\subsection{Cyclic covers of degree three in characteristic two}
\label{ss:Z3}

\begin{remark} 
\label{rem:Z3}
Let $J$ be a field of characteristic two containing a primitive cube root of unity. Let $K/J$ be a Galois extension with Galois group $C$.
By Kummer theory, there exists an element $f \in J$ such that 
$$K=J(\sqrt[3]{f}) \mbox{ with } \sqrt[3]{f}\in K-J.$$ 
Moreover,  the generator $\overline{\rho}$ of $C=\mathrm{Gal}(K/J)$ satisfies 
$$\overline{\rho}(\sqrt[3]{f})=\zeta\sqrt[3]{f}$$  
for a primitive cube root $\zeta$ of unity in $J$. 
\end{remark} 

\begin{remark} 
\label{rem:Zbr}
Let $\pi_2:Y\to Z$ be as in  Assumption \ref{ass:general}.  Defining $J:=k(Z)$ and $K:=k(Y)$, we have a Galois extension $K/J$ with Galois group $C$. We let $f,\sqrt[3]{f},\zeta$ be as in Remark \ref{rem:Z3}. We note that $\zeta\in k$.

Since $\#C = 3$ is relatively prime to the characteristic of $k$, $\pi_2$ is tamely ramified. Let $Z_{\mathrm{br}}$ be the finite set of branch points.
By the Riemann-Hurwitz formula, we have
$$g(Y) - 1 = 3(g(Z)-1) + \#Z_{\mathrm{br}}.$$
\end{remark}

Using Remarks \ref{rem:Z3} and \ref{rem:Zbr}, we obtain the following result.

\begin{lemma} \label{nice function field when Z is P1}
Under Assumption $\ref{ass:general}$, $Z=\mathbb{P}^1_k$ and there exists $s\in K=k(Y)$, $s\not\in J=k(Z)$, such that $s$ is transcendental over $k$, 
$$K=k(Y) = k(s) \quad \mbox{and} \quad J=k(Z) = k(s^3).$$
Moreover, the generator $\overline{\rho}$ of $C=\mathrm{Gal}(K/J)$ satisfies $\overline{\rho}(s)=\zeta s$ for a primitive cube root $\zeta $ of unity in $k$. 
\end{lemma}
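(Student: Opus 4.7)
The plan is to combine the Riemann--Hurwitz formula with Kummer theory in order to first identify $Z$ as $\mathbb{P}^1_k$ and then to normalize the Kummer generator to $\sqrt[3]{t}$ for a suitable coordinate $t$ on $Z$. I would start by invoking the Riemann--Hurwitz identity recorded in Remark \ref{rem:Zbr}: since $Y=\mathbb{P}^1_k$ by Assumption \ref{ass:general} we have $g(Y)=0$, so the identity becomes $3g(Z)+\#Z_{\mathrm{br}}=2$. Because $g(Z),\#Z_{\mathrm{br}}\geq 0$, the only possibility is $g(Z)=0$ and $\#Z_{\mathrm{br}}=2$, so $Z\cong\mathbb{P}^1_k$ and $\pi_2$ is ramified at exactly two closed points. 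I would then pick a coordinate $t$ on $Z$ so that $J=k(t)$ and these two branch points correspond to $t=0$ and $t=\infty$, which is achievable by an element of $\mathrm{PGL}_2(k)$.

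By Remark \ref{rem:Z3} there is $w\in K\setminus J$ with $K=J(w)$ and $f:=w^3\in J^{\times}$. Since $\pi_2$ is ramified at a closed point $Q$ of $Z$ if and only if $v_Q(f)\not\equiv 0\pmod 3$, we have $v_Q(f)\equiv 0\pmod 3$ for every $Q\notin\{0,\infty\}$. Using that every divisor of degree zero on $\mathbb{P}^1_k$ is principal, I can multiply $f$ by an appropriate cube in $J^{\times}$ (equivalently, replace $w$ by $wg$ for some $g\in J^{\times}$, which does not alter $K=J(w)$) to reduce $f$ to $k^{\times}\cdot t^e$ for some $e\in\mathbb{Z}$. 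Because $k$ is algebraically closed of characteristic $2\neq 3$, the remaining scalar is a cube in $k^{\times}$ that I absorb into $w$, leaving $f=t^e$. Since $w$ and $w^2$ both generate $K$ over $J$ (degree three being prime), I may further replace $w$ by $w^2$ if needed to arrange $e\equiv 1\pmod 3$, and a final cube-adjustment forces $e=1$.

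Setting $s:=w$, we then have $s^3=t$, so $K=J(s)=k(s^3)(s)=k(s)$ and $J=k(s^3)$; moreover $s$ is transcendental over $k$ because $\mathrm{trdeg}_k K=1$ and $s\notin k$. The statement about $\overline{\rho}$ is immediate from Remark \ref{rem:Z3}: $\overline{\rho}(s)=\zeta s$ for a primitive cube root of unity $\zeta\in J$, and since $k$ is algebraically closed of characteristic $\neq 3$, this $\zeta$ already lies in $k$. I do not anticipate any substantive obstacle; the only mildly delicate step is the normalization of $f$, which relies only on the triviality of the degree-zero class group of $\mathbb{P}^1_k$ and on $k$ being closed under cube roots, both of which are automatic in our setting.
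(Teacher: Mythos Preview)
Your proof is correct and follows essentially the same route as the paper's: both use the Riemann--Hurwitz identity from Remark~\ref{rem:Zbr} to force $g(Z)=0$ and $\#Z_{\mathrm{br}}=2$, and then invoke Kummer theory to normalize the generator so that $K=k(s)$ and $J=k(s^3)$. The only cosmetic difference is that the paper leaves the two branch points arbitrary and absorbs the fractional linear transformation into the definition of $s=\sqrt[3]{(at-b)/(ct-d)}$, whereas you first move the branch points to $0$ and $\infty$ and then reduce $f$ to $t$; these are the same manipulation in a different order.
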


\begin{proof}
Since $g(Y)=0$, it follows from Remark \ref{rem:Zbr}  that $g(Z)=0$ and $\#Z_{\mathrm{br}} = 2$. In particular, $J=k(Z) = k(t)$ for a variable $t$. 
Let $f=f(t)\in J=k(t)$ and $\zeta$ be as in Remark \ref{rem:Z3}.
Writing $Z_{\mathrm{br}} = \{z_1,z_2\}$, we have 
$$\mathrm{ord}_{z_1}(f) \equiv  -\mathrm{ord}_{z_2}(f)\mod 3
\;\equiv \;\pm 1 \mod 3$$
and $\mathrm{ord}_z(f) \equiv 0\mod 3$ for all $z\in Z-Z_{\mathrm{br}}$.
Hence there exist $a,b,c,d \in k$ with $ad-bc\ne 0$ and $f_1(t)\in k(t)$ such that
$$f=\frac{at-b}{ct-d} \,(f_1(t))^3.$$
Defining 
$$s:=\sqrt[3]{\frac{at-b}{ct-d}}\in K,$$
the remaining statements of the lemma follow.
\end{proof}


\subsection{Alternating four Galois extensions in characteristic two}
\label{ss:A4}

\begin{theorem} 
\label{thm:GaloisA4}
Let $L/J$ be an extension of fields of characteristic $2$ where $J$ contains all cube roots of unity. Then $L/J$ is Galois with Galois group $G\cong A_4$ if and only if the following two conditions hold:
\begin{itemize}
\item[(1)] There exists an intermediate field $K$ of $L/J$ such that 
\begin{itemize}
\item[(a)] $K=J(\sqrt[3]{f})$ for some $f \in J$ with $\sqrt[3]{f}\not\in J$, and
\item[(b)] $K/J$ is Galois with $\mathrm{Gal}(K/J)=\langle\overline{\rho}\rangle\cong \mathbb{Z}/3$ and $\overline{\rho}(\sqrt[3]{f})=\zeta \sqrt[3]{f}$ for a primitive cube root of unity $\zeta $ in $J$; and
\end{itemize}
    
\item[(2)] $L=K(u,v)$ where $u^2-u=\alpha\in K$ such that
\begin{itemize}
\item[(a)] $\alpha\neq \xi^2-\xi$ for any $\xi \in K$,
\item[(b)] $\mathrm{Tr}_{K/J}(\alpha)=\alpha+\overline{\rho}\alpha+\overline{\rho}^2\alpha=0$, and
\item[(c)] $v^2-v=\overline{\rho}\alpha$.
\end{itemize}
\end{itemize}
\end{theorem}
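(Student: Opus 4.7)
The plan is to identify $K$ with the fixed field $L^H$, where $H$ is the unique Klein four subgroup of $G=A_4$. Condition (1) then encodes the cyclic degree-three quotient $K/J$ via Kummer theory (Remark \ref{rem:Z3}), while condition (2) encodes the Klein four extension $L/K$ via Artin--Schreier theory (Remark \ref{rem:K4covers}) together with the compatibility needed for a lift $\rho$ of $\overline{\rho}$ of order three to act correctly on the Artin--Schreier generators. A key structural observation is that in $A_4$ the element $\rho$ must cyclically permute the three involutions of $H$ as $\sigma\mapsto\tau\mapsto\sigma\tau\mapsto\sigma$; if $\rho$ fixed any one of them, $\langle\sigma,\rho\rangle$ would be abelian.

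\textbf{Forward direction.} Assume $\mathrm{Gal}(L/J)\cong A_4$ and set $K:=L^H$. Condition (1) is immediate. From Remark \ref{rem:K4covers} there exist $u,v\in L$ with $u^2-u=\alpha$, $v^2-v=\beta$ and the standard $\sigma,\tau$-action. Using $h(\rho u)=\rho((\rho^{-1}h\rho)(u))$ for $h\in H$, one computes that $\rho u$ transforms under $H$ exactly like $v$, so $\rho u-v\in L^H=K$. Absorbing this difference into $v$ (i.e., replacing $v$ by $\rho u$, which merely changes the Artin--Schreier generator of $K(v)/K$) yields $v=\rho u$ and hence $v^2-v=\rho(\alpha)=\overline{\rho}\alpha$, establishing (2)(c). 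The same $H$-equivariance argument applied to $\rho v=\rho^2 u$ gives $\rho v-(u+v)\in K$; comparing Artin--Schreier equations on both sides forces $\mathrm{Tr}_{K/J}(\alpha)=k_2^2+k_2$ for some $k_2\in K$. Applying $\mathrm{Tr}_{K/J}$ once more, using that trace commutes with Frobenius and that $[K:J]=3$, descends this to $\mathrm{Tr}_{K/J}(\alpha)=c^2+c$ with $c\in J$. Replacing $u$ by $u+c$ (and correspondingly $v=\rho u$ by $v+\overline{\rho}c$) kills the trace and preserves every other relation, yielding (2)(b). Condition (2)(a) is automatic since $\tau(u)=u+1\neq u$.

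\textbf{Backward direction.} Assume (1) and (2). The first task is to show $K(u)\neq K(v)$, equivalently $[L:K]=4$. If instead $K(u)=K(v)$, writing $v=p+qu$ with $p,q\in K$ and using $v^2-v=\overline{\rho}\alpha$ forces $q=1$ and $\overline{\rho}\alpha-\alpha=b^2+b$ for some $b\in K$; iterating the $\overline{\rho}$-action, summing the three resulting equations, and using $\mathrm{Tr}_{K/J}(\alpha)=0$ in characteristic two gives $\alpha=(\overline{\rho}b)^2+\overline{\rho}b\in\wp(K)$, contradicting (2)(a). With $[L:K]=4$ in hand, let $\sigma,\tau$ be the Artin--Schreier involutions of $L/K$, and define $\rho\colon L\to L$ by $\rho|_K=\overline{\rho}$, $\rho(u)=v$, $\rho(v)=u+v$. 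The essential well-definedness check is $(\rho v)^2-\rho v=(u+v)^2-(u+v)=\alpha+\overline{\rho}\alpha=\overline{\rho}^2\alpha$, which is precisely the rewriting of (2)(b) in characteristic two. Direct computation gives $\rho^3=1$ and $(\sigma\rho)^3=1$, so $\langle\sigma,\rho\rangle$ is a quotient of $A_4$; this quotient is proper only if $\sigma=1$ or $\rho=1$, both of which fail (note $\rho(u)=v\neq u$, since $u=v$ would imply $\alpha=\overline{\rho}\alpha\in J$, so $\mathrm{Tr}_{K/J}(\alpha)=3\alpha=\alpha=0\in\wp(K)$). Hence $\langle\sigma,\rho\rangle\cong A_4$ has order $12$, and since $|\mathrm{Aut}_J(L)|\le[L:J]=12$, equality forces $L/J$ to be Galois with $\mathrm{Gal}(L/J)\cong A_4$.

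\textbf{Main obstacle.} The subtlest step is the trace-normalization in the forward direction: descending $\mathrm{Tr}_{K/J}(\alpha)\in\wp(K)$ to $\mathrm{Tr}_{K/J}(\alpha)\in\wp(J)$, and then verifying that a single adjustment $u\mapsto u+c$ with $c\in J$ simultaneously kills the trace and preserves both $v=\rho u$ and $v^2-v=\overline{\rho}\alpha$. A mirror difficulty appears in the backward direction, where the identity $\alpha+\overline{\rho}\alpha=\overline{\rho}^2\alpha$ coming from $\mathrm{Tr}_{K/J}(\alpha)=0$ is simultaneously what makes $\rho$ a field homomorphism and what rules out $K(u)=K(v)$, binding conditions (2)(a), (2)(b), and (2)(c) together in an essential way.
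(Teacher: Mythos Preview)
Your proof is correct and follows the same overall architecture as the paper's: identify $K=L^{H}$, handle $K/J$ by Kummer theory and $L/K$ by Artin--Schreier theory, and then normalize so that $\mathrm{Tr}_{K/J}(\alpha)=0$. The differences are in execution rather than strategy.

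In the backward direction, you construct $\rho$ explicitly on generators via $\rho(u)=v$, $\rho(v)=u+v$ and verify the relations directly; the paper instead extends $\overline{\rho}$ to an embedding $L\to\overline{L}$, observes $\rho u\in\{v,v+1\}$ and $\rho v\in\{u+v,u+v+1\}$, and then checks all four cases. Your version avoids the case split. For $K(u)\neq K(v)$ you give an iterated $\overline{\rho}$-action argument; the paper's route is shorter: since $\mathrm{Tr}_{K/J}(\alpha)=0$ gives $\alpha+\overline{\rho}\alpha=\overline{\rho}^{2}\alpha\notin\wp(K)$, the two Artin--Schreier extensions are distinct immediately.

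In the forward direction your trace normalization differs. The paper writes $\alpha'=j_{0}+j_{1}\sqrt[3]{f}+j_{2}\sqrt[3]{f^{2}}$ so that $\mathrm{Tr}_{K/J}(\alpha')=j_{0}$, then shows directly that $j:=u'+\rho u'+\rho^{2}u'$ lies in $J$ (using that $(\sigma\rho)^{3}=1$ forces $\sigma\rho u'=\rho u'+1$), whence $j^{2}-j=j_{0}$ and $u:=u'+j$ works in one step. You instead first set $v=\rho u$, extract $\mathrm{Tr}_{K/J}(\alpha)=k_{2}^{2}+k_{2}$ with $k_{2}\in K$, and then apply $\mathrm{Tr}_{K/J}$ a second time to descend to $c\in J$. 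Both arguments are valid; the paper's is one step shorter, while yours makes the role of the $H$-equivariance formula $h(\rho u)=\rho((\rho^{-1}h\rho)u)$ more transparent.

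A minor remark: to rule out a proper quotient of $A_4$ in the backward direction you only need $\sigma\neq 1$, since the only nontrivial proper normal subgroup of $A_4$ is $H$ and contains $\sigma$; your additional check that $\rho\neq 1$ is harmless but unnecessary.
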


\begin{proof}
Let $L/J$ be a field extension as in the first sentence of the statement of the theorem.

We first assume that conditions (1) and (2) hold. We want to show that $L/J$ is Galois with Galois group $G\cong A_4$. By condition (2), we have $L=K(u,v)$. Moreover, $u^2-u=\alpha$ such that $\alpha\neq \xi^2-\xi$ for any $\xi \in K$. Since $\overline{\rho}$ is a field automorphism of $K$, it follows that both $\overline{\rho}\alpha$ and $\overline{\rho}^2\alpha$ are also not equal to $\xi^2-\xi$ for any $\xi \in K$. This implies that $K(u)$ and $K(v)$ are both subfields of $L$ that are Galois over $K$ of degree $2$, and that $K(u)\cap K(v)=K$. By Galois theory, the composite $L=K(u) K(v)$ is Galois over $K$ and its Galois group $\mathrm{Gal}(L/K)$ is isomorphic to $$\mathrm{Gal}(K(u)/K)\times \mathrm{Gal}(K(v)/K)\cong \mathbb{Z}/2\times \mathbb{Z}/2.$$
Defining $H:=\mathrm{Gal}(L/K)$, we can write $H=\langle\sigma,\tau:\sigma^2=\tau^2=(\sigma\tau)^2=1\rangle$ such that $\sigma u = u$, $\tau u \ne u$ and $\tau v = v$, $\sigma v \ne v$.  Since $\tau(u^2-u)=\tau\alpha=\alpha$ and $\sigma(v^2-v)=\sigma\rho\alpha=\rho\alpha$, it follows that $\tau u = u+1$ and $\sigma v = v+1$.
Since $\sqrt[3]{f}\in K$, we now have, by condition (1),  that
$$L=K(u,v)=J(\sqrt[3]{f},u,v)$$
where
\begin{equation}
\label{eq:Laction}
\left\{\begin{array}{ccccccccc}
\sigma(\sqrt[3]{f})&=&\sqrt[3]{f}, \quad & \tau(\sqrt[3]{f})&=&\sqrt[3]{f}, \quad& \overline{\rho}(\sqrt[3]{f})&=&\zeta \sqrt[3]{f}, \\
\sigma u&=&u, \quad& \tau u&=&u+1, \quad&  \\
\sigma v&=&v+1, \quad& \tau v&=&v. \quad& 
\end{array}\right.
\end{equation}
Since $\overline{\rho}$ is an automorphism of $K$ over $J$ and since $L/K$ is algebraic, we can extend $\overline{\rho}$ to an embedding $\rho$ from $L$ into an algebraic closure $\overline{L}$ of $L$ that fixes the elements in $J$. By condition (2),
$$v^2-v = \rho\alpha = (\rho u)^2-\rho u$$
and
$$(u+v)^2 - (u+v) = \alpha+\rho\alpha =  \rho^2\alpha = (\rho v)^2-\rho v,$$
which implies 
\begin{equation}
\label{eq:fourrho}
\rho u\in\{v,v+1\}\subset L\quad \mbox{and} \quad \rho v\in\{u+v,u+v+1\}\subset L. 
\end{equation}
Therefore, $\rho$ is an automorphism of $L$ over $J$, and $\langle\sigma,\rho\rangle$ is a subgroup of the automorphism group of $L=J(\sqrt[3]{f},u,v)$ over $J$.

We next show that $G:=\langle\sigma,\rho\rangle \cong A_4$ by showing that the  relations 
$$\sigma^2=\rho^3=(\sigma\rho)^3=1$$
are satisfied. Since 
$$\sigma^2v=\sigma(v+1)=v+1+1=v,$$ 
it follows from (\ref{eq:Laction}) that  $\sigma$ has order $2$. 
Going through the four possibilities for the pair $(\rho u,\rho v)$ from (\ref{eq:fourrho}), we see that in all cases, $\rho^3u = (\sigma\rho)^3u = u$ and $\rho^3v = (\sigma\rho)^3v = v$. Hence it follows from (\ref{eq:Laction}), using $\overline{\rho}=\rho|_K$, that  $\rho$ and $\sigma\rho$ have order $3$, showing that $G=\langle\sigma,\rho\rangle\cong A_4$.
    
Since $\sigma\in \mathrm{Gal}(L/K)$ and $J\subset K$, it follows that the fixed field $L^G=L^{\langle \sigma,\rho\rangle}$ contains $J$. Because $[L:L^G] = \# G = 12 = [L:J]$, we obtain that $L^G = J$.  Hence, by Artin's Theorem, $L/J$ is Galois with Galois group $G=\langle \sigma,\rho\rangle\cong A_4$, which proves the backward direction of Theorem \ref{thm:GaloisA4}.
    
To prove the forward direction of Theorem \ref{thm:GaloisA4}, we now assume that $L/J$ is a Galois extension with Galois group $G\cong A_4$. As in Notation \ref{not:general}, we write $G=\langle\sigma,\rho:\sigma^2=\rho^3=(\sigma\rho)^3=1\rangle$, $\tau=\rho\sigma\rho^{-1}$ and $H=\langle\sigma,\tau\rangle$. Since $H$ is a normal subgroup of $G$, it follows by Galois theory that the fixed field $K:=L^H$ is Galois over $J$ with $\mathrm{Gal}(K/J)= G/H \cong \mathbb{Z}/3$. Since the coset $\rho H$ generates $G/H$, we can use Galois theory to see that the restriction $\overline{\rho}=\rho|_K$ is a generator of the Galois group $\mathrm{Gal}(K/J)$. Since $K/J$ is cyclic of degree 3 and $J$ contains all cube roots of unity, we can use Remark \ref{rem:Z3} to see that there exists an element $f\in J$ such that $K=J(\sqrt[3]{f})$ with $\sqrt[3]{f}\in K- J$. Moreover, the generator $\overline{\rho}=\rho|_K$ of $\mathrm{Gal}(K/J)$ satisfies $\overline{\rho}(\sqrt[3]{f})=\zeta \sqrt[3]{f}$ for a primitive cube root $\zeta $ of unity in $J$. Thus we have shown condition (1).    
    
By Galois theory, $L/K$ is Galois with $\mathrm{Gal}(L/K) = \mathrm{Gal}(L/L^H) = H = \langle\sigma,\tau\rangle$. By Remark \ref{rem:K4covers}, there exist $u^\prime\in L^{\langle\sigma\rangle}$ and $v^\prime\in L^{\langle\tau\rangle}$ such that $L^{\langle\sigma\rangle}=K(u^{\prime})$ and $L^{\langle\tau\rangle}=K(v^{\prime})$, where $(u^\prime)^2-u^\prime=\alpha^\prime\in K$ and $(v^\prime)^2-v^\prime=\beta^\prime\in K$ and none of $\alpha^\prime,\beta^\prime$ can be written as $\xi^2-\xi$ for any $\xi \in K$. Moreover, 
$$\begin{array}{cccccc}
\sigma u^\prime&=&u^\prime,\quad & \tau u^\prime&=&u^\prime+1, \\
\sigma v^\prime&=&v^\prime+1, \quad& \tau v^\prime&=&v^\prime ,
\end{array}$$
and $L=K(u^\prime,v^\prime)$. Additionally, since $u'+v' \not\in L^{\langle \sigma\rangle}\cup L^{\langle \tau\rangle}$, we obtain that $K(u^\prime+v^\prime) = L^{\langle \sigma\tau\rangle}$ and $\alpha'+\beta'\ne \xi^2-\xi$ for any $\xi \in K$.
We have now shown that
$$L=K(u^\prime,v^\prime)=J(\sqrt[3]{f},u^\prime,v^\prime)$$
where $f\in J, \sqrt[3]{f}\in K-J$ and $(u^\prime)^2-u^\prime=\alpha^\prime\in K,(v^\prime)^2-v^\prime=\beta^\prime\in K$ such that none of $\alpha^\prime$, $\beta^\prime$ and $\alpha^\prime+\beta^\prime$ can be written as $\xi^2-\xi$ for any  $\xi\in K$.

Since $\overline{\rho}=\rho|_K$, we have $\mathrm{Tr}_{K/J}(\alpha^\prime)=\alpha^\prime+\rho\alpha^\prime+\rho^2\alpha^\prime$. Because $\alpha^\prime\in K=J(\sqrt[3]{f})$, there exist $j_0,j_1,j_2\in J$ such that 
$$\alpha^\prime=j_0+j_1\sqrt[3]{f}+j_2\sqrt[3]{f^2},$$ 
which implies 
\begin{equation}
\label{eq:oy0}
\mathrm{Tr}_{K/J}(\alpha^\prime)=3j_0+(1+\zeta +\zeta ^2)j_1\sqrt[3]{f}+(1+\zeta ^2+\zeta )j_2\sqrt[3]{f^2}=j_0.
\end{equation} 
Therefore,
$$j_0=\alpha'+\rho\alpha'+\rho^2\alpha'=(u^\prime+\rho u^\prime+ \rho^2 u^\prime)^2-(u^\prime+\rho u^\prime+ \rho^2 u^\prime).$$ 
We next show that $u^\prime+\rho u^\prime+ \rho^2 u^\prime\in J$. This element is certainly in $L$, and, since $J=L^G$ and $G=\langle \sigma,\rho\rangle$, it suffices to show that this element is fixed by $\sigma$ and $\rho$. It is obvious that $\rho$ fixes $u^\prime+\rho u^\prime+ \rho^2 u^\prime$. Since $u^\prime+1=\tau u^\prime=(\rho\sigma\rho^2) u^\prime$, we have
\begin{equation}
\label{eq:oy1}
\sigma\rho^2 u^\prime =\rho^2 (u^\prime+1)=\rho^2 u^\prime+1,
\end{equation}
which implies 
$$\sigma(u^\prime+\rho u^\prime+ \rho^2 u^\prime) = u^\prime+\sigma\rho u^\prime+\rho^2 u^\prime+1.$$
Since $\rho\alpha^\prime\in K$ and $\sigma$ fixes the elements in $K$, we have $\rho\alpha^\prime=\sigma\rho\alpha^\prime=(\sigma\rho u^\prime)^2-\sigma\rho u^\prime$. Since also $\rho\alpha^\prime=(\rho u^\prime)^2-\rho u^\prime$, we obtain 
$$\sigma\rho u^\prime\in\{\rho u^\prime,\rho u^\prime+1\}.$$ 
If $\sigma\rho u^\prime=\rho u^\prime$, then, using (\ref{eq:oy1}), we see that $(\sigma\rho)^3u^\prime=u^\prime+1$, which contradicts that the order of $\sigma\rho$ is 3. Thus $\sigma\rho u^\prime=\rho u^\prime+1$, which implies that $\sigma$ fixes $u^\prime+\rho u^\prime+ \rho^2 u^\prime$, proving that this element lies in $J$.
    
Defining $j:=u^\prime+\rho u^\prime+ \rho^2 u^\prime\in J$, we obtain, using (\ref{eq:oy0}), that
$$j^2 - j = \alpha' +\rho\alpha'+\rho^2\alpha' = \mathrm{Tr}_{K/J}(\alpha') = j_0.$$
Letting $u:=u^\prime + j$ and $\alpha:=\alpha^\prime + j_0$, it follows that $L=K(u,v^\prime)$. Moreover,
$$u^2-u = (u^\prime)^2 - u' + j^2-j = \alpha'+j_0 = \alpha,$$
$\alpha\neq \xi^2-\xi$ for any $\xi\in K$, and  $\mathrm{Tr}_{K/J}(\alpha)=\mathrm{Tr}_{K/J}(\alpha') + 3j_0 = 4 j_0 = 0$. In particular, $u$ and $\alpha$ satisfy conditions (2)(a) and (2)(b). To finish the proof of Theorem \ref{thm:GaloisA4}, we now show that
$$L=K(u,v^\prime)=K(u,\rho u).$$
We have that $(\rho^2 u)^2-\rho^2u=\rho^2\alpha=\rho\alpha+\alpha$, which implies that $\rho^2u\in\{u+\rho u,u+\rho u +1\}$. Since $\rho$ is a field automorphism of $L$, $\rho u\in L=K(u,v^\prime)$. If $\rho u\in K$, then $\rho^2u\in K,$ implying $\rho u+\rho^2u\in K$, which is equivalent to $u\in K$. But this is impossible. On the other hand, using that $\tau=\rho\sigma\rho^{-1}$, we have $$\tau\rho u=\rho\sigma u=\rho u$$ implying that $\rho u\in L^{\langle\tau\rangle}-K$. Since $L^{\langle \tau\rangle}=K(v')$, this means that $K(v')=K(\rho u)$. Therefore, $L=K(u,v^\prime)=K(u, \rho u)$. Furthermore, $(\rho u)^2-\rho u=\rho\alpha=\overline{\rho}\alpha$, which gives us condition (2)(c), completing the proof of Theorem \ref{thm:GaloisA4}.
\end{proof}


\section{Holomorphic differentials of alternating four covers in characteristic two}
\label{s:holom}

Throughout this section, we suppose Assumption \ref{ass:general} holds, and we use Notation \ref{not:general}. Using Theorem \ref{thm:GaloisA4}, we introduce the following notation for the function fields.

\begin{nota}
\label{not:functionfields}
Let $\pi:X\to Z$, $\pi_1:X\to Y$ and $\pi_2:Y\to Z$ be as in Assumption \ref{ass:general}. The function fields of the curves $X$, $Y$ and $Z$ are as follows:
$$\begin{array}{ccccccc}
J&:=&k(Z)&=&k(s^3),&&\\
K&:=&k(Y) &=&J(s) &=& k(s),\\
L&:=&k(X) &=& K(u,\rho u) &=&k(s,u,\rho u) 
\end{array}$$
such that
\begin{itemize}
\item[(i)] $s\in K-J$, $s$ is transcendental over $k$,
\item[(ii)] $u\in L-K$, $u^2-u = \alpha\in K$, $\alpha\neq \xi^2-\xi$ for any $\xi\in K$, and $\mathrm{Tr}_{K/J}(\alpha)=\alpha+ \rho \alpha + \rho^2 \alpha=0$.
\end{itemize}
Moreover, there exists a primitive cube root of unity $\zeta\in k$ such that the actions of the generators $\sigma,\rho$ of $G$ on $s,u,\rho u$ are as follows:
$$\begin{array}{rclrcl}
\sigma (s) &=& s, \quad &\rho (s) &=& \zeta s,  \\
\sigma (u) &=& u, \quad &\rho (u) &=& \rho u, \\
\sigma (\rho u) &=& \rho u+1, \quad & \rho (\rho u)  &\in& \{u+\rho u ,u+\rho u + 1\}.
\end{array}$$ 
Defining
$$v:=\rho u  \quad \mbox{and} \quad \beta:=\rho \alpha$$
we have $v^2-v = \beta$ and $\beta\neq \xi^2-\xi$ for any $\xi\in K$.
Additionally, $\alpha+\beta = \rho^2 \alpha$ and $u+v\in \{\rho^2u,\rho^2 u+1\}$.
\end{nota}

Concerning the number of totally ramified points of $\pi$, we have the following result.

\begin{lemma} 
\label{lem:totram}
Under Assumption $\ref{ass:general}$ and Notations $\ref{not:general}$ and $\ref{not:functionfields}$, there are at most two totally ramified branch points of the cover $\pi:X\to Z$. Moreover, we can assume, without loss of generality, that the points in $Y_{\mathrm{br}}$ above these points are in $\{0,\infty\}$.
\end{lemma}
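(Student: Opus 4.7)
The plan is to characterize totally ramified branch points of $\pi$ using the inertia groups in the two subcovers $\pi_1$ and $\pi_2$, and then count them via the explicit form of the cyclic cover $\pi_2$ from Lemma \ref{nice function field when Z is P1}.

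First, I would establish that a branch point $z\in Z$ of $\pi$ is totally ramified if and only if its preimage $y\in Y$ under $\pi_2$ lies both in $Y_{\mathrm{br}}$ (the branch locus of $\pi_1$) and in the fixed-point set $Y^C$ of $C$ acting on $Y$. To see this, fix $x\in \pi_1^{-1}(y)$. Since $H=\mathrm{Gal}(L/K)$ acts transitively on $\pi_1^{-1}(y)$, any $g\in G$ whose image $\overline{g}\in C$ fixes $y$ can be adjusted by an element of $H$ so as to fix $x$. Consequently, the projection $G\twoheadrightarrow G/H=C$ sends the inertia group $I_x\le G$ onto the inertia group $I_y\le C$ of $y$ in $\pi_2$, while $I_x\cap H$ is precisely the inertia of $x$ in $\pi_1$. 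This yields $|I_x|=|I_x\cap H|\cdot|I_y|$, so $I_x=G$ forces $|I_x\cap H|=4$ and $|I_y|=3$. By Assumption \ref{ass:general}, the first equality is equivalent to $y$ being a branch point of $\pi_1$, while the second is equivalent to $y$ being a fixed point of the cyclic group $C$.

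Second, I would invoke Lemma \ref{nice function field when Z is P1} to choose $s\in K$ with $K=k(s)$, $J=k(s^3)$, and $\overline{\rho}(s)=\zeta s$ for a primitive cube root of unity $\zeta\in k$. The fixed points of the automorphism $s\mapsto \zeta s$ on $Y=\mathbb{P}^1_k$ are exactly $0$ and $\infty$. Combining this with the first step, the totally ramified branch points of $\pi$ correspond bijectively to the elements of $\{0,\infty\}\cap Y_{\mathrm{br}}$, which has at most two elements; moreover their preimages in $Y$ automatically lie in $\{0,\infty\}$, as claimed.

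The only technical subtlety is the identification of the image of $I_x$ in $C$ with the full inertia group $I_y$, which rests on the transitivity of $H$ on $\pi_1^{-1}(y)$; after that the count is immediate and I expect no genuine obstacle. Because Notation \ref{not:functionfields} resets the coordinate $s$ at the start of this section, no extra compatibility with the earlier normalization of \eqref{eq:Ybr} is required, so the without-loss-of-generality clause is absorbed into the choice of $s$ furnished by Lemma \ref{nice function field when Z is P1}.
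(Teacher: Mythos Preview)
Your proof is correct and follows essentially the same route as the paper's. Both arguments observe that a totally ramified point of $\pi$ must lie over a ramification point of $\pi_2$ in $Y$, and then identify those points as $\{0,\infty\}$ in the coordinate $s$. The paper reaches this by citing Remark~\ref{rem:Zbr} ($\#Z_{\mathrm{br}}=2$ via Riemann--Hurwitz) and moving the two branch points by a fractional linear transformation on $Z$, whereas you work directly in the coordinate already fixed by Notation~\ref{not:functionfields} and compute the fixed points of $s\mapsto\zeta s$; your inertia-group analysis giving the exact characterization $\{0,\infty\}\cap Y_{\mathrm{br}}$ is a little sharper than the paper's bare upper bound, but the underlying idea is the same.
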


\begin{proof}
By Remark \ref{rem:Zbr}, the set of branch points $Z_{\mathrm{br}}$ of the cover $\pi_2:Y\to Z$ consists of precisely two points, say $Z_{\mathrm{br}}=\{Q_1,Q_2\}$. For $i\in\{1,2\}$, let $P_i$ be the unique point of $Y$ above $Q_i$. 
Let $Y_{\mathrm{br}}$ be the set of branch points of the cover $\pi_1:X\to Y$.
Since every automorphism of a function field $k(t)$ in one variable over $k$ is given by sending $t$ to a fractional linear transformation $\frac{at+b}{ct+d}$ with $ad-bc\ne 0$, we can move any 3 points on $\mathbb{P}^1_k$ to any other 3 points on $\mathbb{P}^1_k$. Hence we can move  $Q_1$ to $\infty$ and $Q_2$ to $0$. 
This means that if $k(Z)=k(t)$ then $k(Y)=k(t)(s)$ where $s^3=t$ or $s^3=t^{-1}$. 
In other words, $\{P_1,P_2\}=\{0,\infty\}$. 
Since the totally ramified points of $\pi$ on $X$, if they exist, must lie above $P_1$ or $P_2$, this completes the proof.
\end{proof}

By Lemma \ref{lem:totram}, we have to consider the following three cases, where we replace $s$ by $s^{-1}$ if necessary:
\begin{itemize}
\item[(1)] $\pi$ has precisely one totally ramified point and $\pi_1:X\to Y$ is branched over $\infty$ but not over $0$;
\item[(2)] $\pi$ has precisely two totally ramified points and $\pi_1:X\to Y$ is branched over both $\infty$ and $0$;
\item[(3)] $\pi$ has no totally ramified point and $\pi_1:X\to Y$ is branched neither over $\infty$ nor over $0$.
\end{itemize}

Case (1) is particularly important since it includes all so-called Harbater-Katz-Gabber $G$-covers, which we show next. Harbater-Katz-Gabber covers are of independent interest; see, for example, \cite{KontogeorgisTsouknidas2020,Obus2017} and their references. 
We first recall their definition (see, for example, \cite[Section 4B]{BCPS2017}).

\begin{definition}
\label{def:HKGcovers}
A $G$-cover $\tau: X\to Z$ of smooth projective curves over $k$ is called a Harbater-Katz-Gabber $G$-cover, if the following three conditions are satisfied:
\begin{itemize}
\item[(i)] $Z=\mathbb{P}^1_k$;
\item[(ii)] $\tau$ has precisely one totally ramified point $x\in X$; and 
\item[(iii)] the action of $G$ on $X-\{x\}$ is either unramified everywhere, or it is tamely and non-trivially ramified at one $G$-orbit in $X-\{x\}$ and unramified everywhere else. 
\end{itemize}
\end{definition}

\begin{lemma} 
\label{lem:HKGcovers}
Using Notation $\ref{not:general}$, every Harbater-Katz-Gabber $G$-cover of smooth projective curves over $k$ is an alternating four cover $\pi: X\to Z$ with Galois group $G$ with exactly one totally ramified point satisfying Assumption $\ref{ass:general}$.
\end{lemma}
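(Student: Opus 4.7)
The plan is to verify, for a Harbater--Katz--Gabber $G$-cover $\pi\colon X\to Z$, the conditions of Assumption \ref{ass:general} not already built into the HKG definition: that $Y:=X/H=\mathbb{P}^1_k$, that $\pi_1$ is ramified, and that every branch point of $\pi_1$ is totally ramified. Throughout, I would exploit that the unique totally ramified point $x\in X$ has $\mathrm{Stab}_G(x)=G$.

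First, set $y:=\pi_1(x)\in Y$. Since $\mathrm{Stab}_G(x)\cap H=H$ and its image in $C=G/H$ is all of $C$, the point $y$ is totally ramified under both $\pi_1$ and the tame cover $\pi_2\colon Y\to Z=\mathbb{P}^1_k$. This immediately gives that $\pi_1$ is ramified at $y$.

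Next, I would pin down the remaining ramification of $\pi_2$ in order to compute $g(Y)$. A point $y'\neq y$ of $Y$ is a branch point of $\pi_2$ iff some $x'\in\pi_1^{-1}(y')$ has $G$-stabilizer meeting $G\smallsetminus H$. By HKG, any such $x'$ lies in the (at most one) tame $G$-orbit in $X-\{x\}$, with stabilizer of order coprime to $2$; in $A_4$ this forces $|\mathrm{Stab}_G(x')|=3$. The key observation is that without such a tame orbit, $y$ would be the only branch point of $\pi_2$, and Riemann--Hurwitz for $\pi_2$ would give $2g(Y)-2=-6+2=-4$, which is impossible. Hence the tame orbit must exist. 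Because $\gcd(3,|H|)=1$, one has $H\cdot\mathrm{Stab}_G(x')=G$, so the $G$-orbit $G\cdot x'$ consists of a single $H$-orbit of size $4$ and therefore collapses under $\pi_1$ to one point $y'\in Y$, which is totally ramified under $\pi_2$. Riemann--Hurwitz for $\pi_2$ with the two totally ramified points $y,y'$ now yields $g(Y)=0$, i.e.\ $Y=\mathbb{P}^1_k$.

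Finally, I would show that $y$ is the only branch point of $\pi_1$. A branch point of $\pi_1$ arises from a point of $X$ with nontrivial $H$-stabilizer, equivalently one whose $G$-stabilizer contains a nontrivial $2$-element. Since $H$ is the unique Sylow $2$-subgroup of $A_4$, every $2$-element of $G$ lies in $H$, and the only points of $X$ with nontrivial $G$-stabilizer are $x$ (stabilizer $G$) and the points of the tame orbit (stabilizer of order $3$, containing no $2$-element). Hence $y$ is the unique branch point of $\pi_1$ and it is totally ramified, completing the verification of Assumption \ref{ass:general}.

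The main obstacle I anticipate is the Riemann--Hurwitz dichotomy in the middle step: one has to notice that the HKG ``unramified outside $x$'' alternative is incompatible with $g(Y)\geq 0$, forcing the tame orbit to exist. Everything else is routine bookkeeping of stabilizers across $G$, $H$, and $C$.
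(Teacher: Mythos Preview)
Your proof is correct and follows essentially the same approach as the paper: both arguments hinge on Riemann--Hurwitz for the tame $C$-cover $\pi_2\colon Y\to Z$ to force $g(Y)=0$ and rule out the ``unramified outside $x$'' alternative, then read off that $\pi_1$ has a single (totally ramified) branch point from the HKG stabilizer data. The paper is terser---it bounds $\#Z_{\mathrm{br}}\le 2$ directly from the HKG definition and applies Remark~\ref{rem:Zbr} in one line---while you spell out the stabilizer and orbit computations explicitly, but the underlying argument is the same.
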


\begin{proof}
Suppose $\tau: X\to \mathbb{P}^1_k=:Z$ is a Harbater-Katz-Gabber $G$-cover, as in Definition \ref{def:HKGcovers}. 
In particular, $1\le \#Z_{\mathrm{br}} \le 2$. 
Writing $\tau=\tau_2\circ \tau_1$ for an $H$-cover $\tau_1:X\to Y$ and a $C$-cover $\tau_2:Y\to Z$, it follows by Remark \ref{rem:Zbr} that $g(Y) - 1 = -3 + \#Z_{\mathrm{br}}$. Since $g(Y)\ge 0$, this implies $\#Z_{\mathrm{br}} = 2$ and $g(Y)=0$. Since $\tau$ is a Harbater-Katz-Gabber $G$-cover, it follows that $\tau$ has precisely one totally ramified point $x\in X$ and the action of $G$ on $X-\{x\}$ is tamely and non-trivially ramified at one $G$-orbit in $X-\{x\}$ and unramified everywhere else. But this implies that $\tau_1:X\to Y$ is totally ramified with $\#Y_{\mathrm{br}}=1$, which proves Lemma \ref{lem:HKGcovers}.
\end{proof}

\begin{nota}
\label{not:specialbranchY}
Under Assumption \ref{ass:general} and Notations \ref{not:general} and \ref{not:functionfields}, define 
$$Y_{\infty,0}:=\left\{P\in Y_{\mathrm{br}}\,:\, P\in\{\infty,0\}\right\}\quad\mbox{and}\quad r:=\# Y_{\infty,0}$$
so that $r\in\{0,1,2\}$ by Lemma $\ref{lem:totram}$. For $y\in Y_{\infty,0}$, define
$$\epsilon_y:=\left\{\begin{array}{rl}
-1 & \mbox{if } y=\infty,\\
1 & \mbox{if } y=0.
\end{array}\right.$$
Moreover, if $r> 0$, we replace $s$ by $s^{-1}$ if necessary to be able to assume, without loss of generality, that $\infty\in Y_{\infty,0}$.
\end{nota}

Since we have to deal with both the $H$-cover $\pi_1:X\to Y$ and the $C$-cover $\pi_2: Y\to Z$, we will not apply a fractional linear transformation to $K=k(Y)=k(s)$ to ensure that $\infty$ is a branch point of $\pi_1$ and $0$ is not. Instead, we will keep $Y_{\infty,0}$ as in Notation \ref{not:specialbranchY} for the remainder of this section. Because of this, we now discuss a slight variation of the results from \cite[Section 3]{BleherCamacho2023}, and in particular of the $k$-basis for $\HH^0(X,\Omega_X)$ as given in \cite[Definition 3.5 and Lemma 3.6]{BleherCamacho2023}.

Similarly to (\ref{eq:Ybr}), we have
$$Y_{\mathrm{br}} = Y_{\infty,0}\cup \{y_1,\ldots,y_n\}$$
for some $n\ge 0$, where 
each $y\in Y_{\infty,0}$ corresponds to $s^{\epsilon_y}$ for $\epsilon_y$ as in Notation \ref{not:specialbranchY},
and each $y_i$ corresponds to $s-\mu_i$ for certain $\mu_i\in k^\times$. Similarly to  (\ref{eq:standardform}), we can use the partial fractions decomposition of $\alpha$ to show that there exists $h_\alpha\in K = k(s)$ such that
\begin{equation}
\label{eq:newalpha!}
\alpha-(h_\alpha^2-h_\alpha)=s^{p_\infty+\left(\sum_{i=1}^np_i\right)-d}\;\frac{\left(\prod_{i=1}^n \mu_i^{p_i}\right)\left(c_ds^d+\dots+c_1s+c_0\right)}{(s-\mu_1)^{p_1}\dots(s-\mu_n)^{p_n}}
\end{equation}
satisfying the following five conditions:
\begin{itemize}
\item $\infty \in Y_{\infty,0}$ if and only if $p_\infty>0$;
\item $0\in Y_{\infty,0}$ if and only if $p_0:=d-(p_\infty + p_1 + \cdots + p_n)>0$;
\item if $y \in Y_{\infty,0}$ then $p_y \in\{m_y,M_y\}$; 
\item if $n\ge 1$ then $p_i\in\{m_{y_i},M_{y_i}\}$ for $1\le i \le n$;
\item $c_d,c_0\in k^{\times}$, and $c_ds^d+\cdots+c_1s+c_0$ and $(s-\mu_1)^{p_1}\dots(s-\mu_n)^{p_n}$ are relatively prime in $k[s]$.
\end{itemize}

\begin{lemma} 
\label{lem:branchpoints}
Suppose Assumption $\ref{ass:general}$ holds, and use Notations $\ref{not:general}$, $\ref{not:functionfields}$ and $\ref{not:specialbranchY}$.
\begin{itemize}
\item[(i)] We have $\#Y_{\mathrm{br}} = r+n=r+3 \ell$ for some $\ell \ge 0$.
\item[(ii)] There exist $\psi_1,\ldots,\psi_\ell\in k^\times$ such that 
$$\{\mu_1,\ldots,\mu_n\} = \{\zeta^i\psi_j\;:\; 1\le j\le \ell, 0\le i \le 2\}.$$
\item[(iii)] The element $h_\alpha \in k(s)$ in $(\ref{eq:newalpha!})$ can be chosen to satisfy $\mathrm{Tr}_{K/J}(h_\alpha) = 0 = \mathrm{Tr}_{K/J}(h_\alpha^2)$, and hence $\mathrm{Tr}_{K/J}(\alpha-(h_\alpha^2-h_\alpha))=0$.
\item[(iv)] If $\infty\in Y_{\infty,0}$ then $p_\infty \equiv \pm 1\mod 3$, and if $0\in Y_{\infty,0}$ then $p_0 \equiv \pm 1\mod 3$.
\end{itemize}
\end{lemma}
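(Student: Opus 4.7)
The proof divides naturally into its four parts, with (iii) being the subtle one.

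For parts (i) and (ii), since $H$ is normal in $G$, the quotient $C = G/H$ acts on $Y = X/H$ and permutes $Y_{\mathrm{br}}$ (conjugation by any $g \in G$ carries the inertia subgroup at $x \in X$ to that at $g(x)$). The generator $\overline{\rho}$ acts on $Y = \mathbb{P}^1_k$ by $s \mapsto \zeta s$, whose only fixed points are $0$ and $\infty$, so every non-fixed $C$-orbit on $Y$ has size $3$ and (if it meets $k^\times$) has the form $\{\psi, \zeta\psi, \zeta^2\psi\}$. Partitioning $Y_{\mathrm{br}}$ into the $r$ fixed branch points in $Y_{\infty,0}$ and $\ell$ free $C$-orbits, with chosen representatives $\psi_1, \ldots, \psi_\ell \in k^\times$, yields both (i) and (ii).

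For part (iv), first observe that $\mathrm{Tr}_{K/J}(\alpha) = \alpha + \beta + (\alpha + \beta) = 0$ by Notation \ref{not:functionfields}. Applying $\mathrm{Tr}_{K/J}$ to $\alpha = (h_\alpha^2 - h_\alpha) + \alpha'$ from (\ref{eq:newalpha!}), and using that $\wp(x) := x^2 - x$ is additive in characteristic $2$ and commutes with $\mathrm{Tr}_{K/J}$, I obtain
\[
\mathrm{Tr}_{K/J}(\alpha') \;=\; \wp(T_0), \qquad T_0 := \mathrm{Tr}_{K/J}(h_\alpha) \in J,
\]
where $\alpha'$ denotes the right-hand side of (\ref{eq:newalpha!}). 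Since $J = k(s^3)$, every element of $J$ has pole order at $\infty$ (viewed in $K = k(s)$) divisible by $3$, so $\wp(T_0) = T_0^2 - T_0$ has pole order at $\infty$ either $0$ or an even multiple of $3$ (dominated by $T_0^2$). On the other hand, reading off (\ref{eq:newalpha!}) shows $\alpha'$ has a pole at $\infty$ of exact odd order $p_\infty$ with leading coefficient some $c \ne 0$; since $\rho^i$ scales this leading coefficient by $\zeta^{i p_\infty}$, the leading term of $\mathrm{Tr}_{K/J}(\alpha')$ at order $p_\infty$ equals $c(1 + \zeta^{p_\infty} + \zeta^{2 p_\infty})\, s^{p_\infty}$. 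If $3 \mid p_\infty$ this simplifies to $c\, s^{p_\infty} \ne 0$ (using $3 = 1$ in characteristic $2$), forcing a pole of odd order $p_\infty$ in $\mathrm{Tr}_{K/J}(\alpha')$ and contradicting the even/zero constraint from $\wp(T_0)$. Hence $3 \nmid p_\infty$, i.e., $p_\infty \equiv \pm 1 \pmod 3$; the argument at $0$ is entirely symmetric.

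For part (iii), given any $h_\alpha^{(0)}$ satisfying (\ref{eq:newalpha!}) with $T_0 := \mathrm{Tr}_{K/J}(h_\alpha^{(0)}) \in J$, the plan is to set $h_\alpha := h_\alpha^{(0)} + T_0$. Since $\#C = 3 = 1$ in characteristic $2$, the trace restricts to the identity on $J$, whence $\mathrm{Tr}_{K/J}(h_\alpha) = T_0 + T_0 = 0$; then $\mathrm{Tr}_{K/J}(h_\alpha^2) = \mathrm{Tr}_{K/J}(h_\alpha)^2 = 0$ (Frobenius commutes with $\mathrm{Tr}_{K/J}$) and $\mathrm{Tr}_{K/J}(\alpha - (h_\alpha^2 - h_\alpha)) = 0$ follow at once. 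What needs checking is that the new right-hand side
\[
\alpha - \wp(h_\alpha) \;=\; \alpha'^{(0)} - \wp(T_0) \;=\; \alpha'^{(0)} - \mathrm{Tr}_{K/J}(\alpha'^{(0)})
\]
is still of the form (\ref{eq:newalpha!}) with the same pole orders $p_y$. At each $y \in Y_{\infty,0}$ this follows immediately from (iv): the pole order of $\mathrm{Tr}_{K/J}(\alpha'^{(0)})$ at $y$ is a multiple of $3$, hence strictly less than the odd $p_y \not\equiv 0 \pmod 3$, so the subtraction does not alter the leading term at $y$. At each free $C$-orbit $\{\mu, \zeta\mu, \zeta^2\mu\}$ the analysis is more delicate: because $\wp(T_0) \in J$, its pole order is the same even integer at all three orbit members, whereas the orbit pole orders $p_y \in \{m_y, M_y\}$ of $\alpha'^{(0)}$ are odd; a leading-coefficient/parity analysis parallel to (iv), combined with the identity $\mathrm{Tr}_{K/J}(\alpha'^{(0)}) = \wp(T_0)$, then forces this common even orbit pole order to drop strictly below every $p_y$ in the orbit, so the subtraction perturbs only the numerator coefficients $c_j$ of (\ref{eq:newalpha!}) without changing the structural data. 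Confirming this orbit-level parity/divisibility estimate in every configuration of $(p_\mu, p_{\zeta\mu}, p_{\zeta^2\mu}) \in \{m_y, M_y\}^3$ is the main technical obstacle; should the naive modification $h_\alpha^{(0)} + T_0$ fail to preserve (\ref{eq:newalpha!}) in some configuration, the fallback is to pre-adjust $h_\alpha^{(0)}$ by a suitable element of $J$ with sufficiently small poles at the offending orbit, exploiting the fact that any $g \in J$ with pole orders in $K$ strictly less than $p_y/2$ at each branch point leaves the form (\ref{eq:newalpha!}) intact up to modification of the $c_j$.
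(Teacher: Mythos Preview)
Your arguments for (i), (ii), and (iv) are correct. For (i)--(ii) you use the $C$-action on $Y_{\mathrm{br}}$ directly, and for (iv) a clean parity/divisibility obstruction; the paper instead extracts all four parts from a single explicit partial-fraction construction of $h_\alpha$, so your route to (i), (ii), (iv) is more conceptual and arguably preferable.

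Part (iii), however, has a real gap. Your modification $h_\alpha := h_\alpha^{(0)} + T_0$ does kill both traces, but you have not shown that $\alpha'^{(0)} - \wp(T_0)$ retains the form (\ref{eq:newalpha!}). At a free orbit $\{\mu,\zeta\mu,\zeta^2\mu\}$ the element $\wp(T_0)\in J$ has a \emph{common} even pole order $q$ at all three points, and from parity you get only $q<\max(p_\mu,p_{\zeta\mu},p_{\zeta^2\mu})$, not $q<\min$. In the asymmetric configuration $p_\mu=m_y<M_y=p_{\zeta\mu}=p_{\zeta^2\mu}$ (which does occur; see Lemma~\ref{lem:valuesbranchpoints}(b)(ii)) nothing rules out $m_y<q<M_y$, and then $\alpha'^{(0)}-\wp(T_0)$ acquires even pole order $q$ at $\mu$, so (\ref{eq:newalpha!}) fails there. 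Your proposed fallback does not rescue this: for $g\in J$ one has $\mathrm{Tr}_{K/J}(h_\alpha^{(0)}+g)=T_0+3g=T_0+g$ in characteristic $2$, so the trace-zero requirement forces $g=T_0$ and you cannot choose the poles of $g$ independently.

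The paper avoids this by working forward rather than backward. Using $\mathrm{Tr}_{K/J}(\alpha)=0$, it first writes $\alpha$ itself in a $C$-equivariant partial-fraction form in which the coefficients at the three points of each free orbit already satisfy $a_{j,0,e}+a_{j,1,e}+a_{j,2,e}=0$, and then assembles $h_\alpha$ term-by-term from pieces that manifestly have trace zero. No post-hoc correction is needed, and (iv) drops out as a byproduct because the exponents at $0$ and $\infty$ that survive the trace constraint are exactly those not divisible by~$3$.
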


\begin{proof}
Since $k(s^{-1})=k(s)$, we can write $\alpha$ as a rational function over $k$ in the variable $s^{-1}$. Then the condition $\mathrm{Tr}_{K/J}(\alpha)=0$ means that there exist $j_1(s^{-3}),j_2(s^{-3})\in J=k(s^{-3})$ such that
\begin{equation}
\label{eq:alpha1!}
\alpha = s^{-1}\cdot j_1(s^{-3}) + s^{-2}\cdot j_2(s^{-3}).
\end{equation}
Because of (\ref{eq:newalpha!}) and since $k$ is algebraically closed, it follows that there exist polynomials $f_1(T),f_2(T)\in k[T]$ such that
\begin{equation}
\label{eq:alpha2!}
\alpha = \frac{s^{-1}\cdot f_1(s^{-3}) + s^{-2}\cdot f_2(s^{-3})}{(s^{-3})^{e_\infty} (s^{-3}-\psi^{-3}_1)^{e_1}\cdots (s^{-3}-\psi^{-3}_\ell)^{e_\ell}}
\end{equation}
for some $\ell\ge 0$, satisfying the following five conditions:
\begin{itemize}
\item[(a)] $\psi_1,\dots,\psi_\ell \in k^\times$ are distinct, and $\{\mu_1,\dots,\mu_n\} = \{\zeta^i\psi_j\;:\; 1\le j\le \ell, 0\le i \le 2\}$;
\item[(b)] $f_1(0)\ne 0$ or $f_2(0)\ne 0$, and $f_1(\psi^{-3}_j) + \zeta^i\psi^{-1}_jf_2(\psi^{-3}_j)\ne 0$ for all $1\le j\le \ell$, $0\le i\le 2$;
\item[(c)] $e_1,\dots,e_\ell >0$;
\item[(d)] $e_\infty >0$ if and only if $\infty \in Y_{\infty,0}$;
\item[(e)] $e_0:=-3 (e_\infty+e_1+\cdots+e_\ell)+1+\mathrm{max}\{3\,\mathrm{deg}(f_1),1 + 3\,\mathrm{deg}(f_2)\}> 0$ if and only if $0 \in Y_{\infty,0}$.
\end{itemize}
We note that $e_0\equiv \pm 1\mod 3$, which means that $e_0\ne 0$. Moreover, part (e) follows from
\begin{eqnarray*}
\mathrm{ord}_{s}\alpha &=& 3 (e_\infty+e_1+\cdots+e_\ell)-1 + \mathrm{min}\{\mathrm{ord}_{s}f_1(s^{-3}), -1 + \mathrm{ord}_{s}f_2(s^{-3})\}\\
&=&3 (e_\infty+e_1+\cdots+e_\ell)-1-\mathrm{max}\{3\,\mathrm{deg}(f_1),1 + 3\,\mathrm{deg}(f_2)\}.
\end{eqnarray*}
In particular, this implies parts (i) and (ii) of Lemma \ref{lem:branchpoints}.
From now on, we write 
$$\{\mu_1,\ldots,\mu_n\} = \{\zeta^i\psi_j\;:\; 1\le j\le \ell, 0\le i \le 2\}.$$
Using condition (b) above, we define
$$b:=\begin{cases}
1 & \mbox{ if } f_1(0)\neq 0,\\
2 & \mbox{ otherwise}.
\end{cases}$$

To prove parts (iii) and (iv) of Lemma \ref{lem:branchpoints}, we look at the partial fraction decomposition of (\ref{eq:alpha2!}) in the variable $s^{-1}$ over $k$. This partial fraction decomposition looks like
\begin{equation} 
\label{eq:alpha3!}
\alpha = \sum_{e=0}^{e_0} a_{0,e}(s^{-1})^e + \sum_{e=1}^{{3e_\infty-b}} \frac{a_{\infty,e}}{(s^{-1})^e} + \sum_{j=1}^{\ell} \sum_{i=0}^2 \sum_{e=1}^{3e_j} \frac{a_{j,i,e}}{(\zeta^{-i}s^{-1}-\psi^{-1}_j)^e}
\end{equation}
for appropriate $a_{0,e} (0\le e \le e_0),~a_{\infty,e} (1\le e \le 3e_\infty-b), ~a_{j,i,e} (1\le j\le \ell, 0\le i\le 2, 1\le e \le 3e_j)\in k$. We first calculate the traces from $K$ to $J$ of the summands on the right side. Fix $e\ge 0$. We have
\begin{equation}
\label{eq:trace0!}
\mathrm{Tr}_{K/J}\left((s^{-1})^e\right) = \sum_{c=0}^2 (\zeta^{-c}s^{-1})^e
= \left(1+\zeta^e+ \zeta^{2e}\right)(s^{-1})^e = 
\begin{cases}
\displaystyle(s^{-1})^e & \mbox{if }e\equiv 0 \mod 3, \\
\;\;\;0& \mbox{otherwise, and}
\end{cases}
\end{equation}
\begin{equation}
\label{eq:traceinfinity!}
\mathrm{Tr}_{K/J}\left(\frac{1}{(s^{-1})^e}\right) = \sum_{c=0}^2 \frac{1}{(\zeta^{-c}s^{-1})^e} = \frac{1+\zeta^e+ \zeta^{2e}}{(s^{-1})^e} = 
\begin{cases}
\displaystyle\frac{1}{(s^{-1})^e} & \mbox{if }e\equiv 0 \mod 3, \\
\;\;\;0& \mbox{otherwise}.
\end{cases}
\end{equation}
On the other hand, for all $1\le j\le \ell$ and all $0\le i\le 2$, we have
\begin{eqnarray}
\nonumber
\mathrm{Tr}_{K/J}\left(\frac{1}{(\zeta^{-i}s^{-1}-\psi^{-1}_j)^e}\right) &=&\sum_{c=0}^2 \frac{1}{(\zeta^{-c}s^{-1}-\psi^{-1}_j)^e}\\
\label{eq:need1!}
&=& \frac{1}{(s^{-3}-\psi^{-3}_j)^e} \sum_{c=0}^2\left(\zeta^{-2c}s^{-2} +\zeta^{-c}\psi_j^{-1}s^{-1} + \psi_j^{-2}\right)^e.
\end{eqnarray}
We note that
\begin{equation}
\label{eq:need2!}
\sum_{c=0}^2 \left(\zeta^{-2c}(\psi_j)^{-2} +\zeta^{-c}\psi_j^{-1}(\psi_j)^{-1} + \psi_j^{-2}\right)^e = \psi_j^{-2e} \ne 0.
\end{equation}
By the multinomial theorem, we have
$$\mathrm{Tr}_{K/J}\left(\frac{1}{(\zeta^{-i}s^{-1}-\psi^{-1}_j)^e}\right)
= \frac{1}{(s^{-3}-\psi^{-3}_j)^e} \sum_{{ \genfrac {} {}{0pt}{2}{0\le d_1,d_2,d_3\le e}{d_1+d_2+d_3=e}}} \frac{e!}{d_1! d_2! d_3!}
E_j(d_1,d_2,d_3)$$
where
\begin{eqnarray*}
E_j(d_1,d_2,d_3) &=& \sum_{c=0}^2\left( \zeta^{-2cd_1}s^{-2d_1} \zeta^{-cd_2}\psi_j^{-d_2}s^{-d_2}\psi_j^{-2d_3}\right)\\
&=&\begin{cases}
\psi_j^{-(d_2+2d_3)} s^{-(2d_1+d_2)} & \mbox{if }2d_1+d_2\equiv 0 \mod 3, \\
\;\;\;0& \mbox{otherwise}.
\end{cases}
\end{eqnarray*}
Using (\ref{eq:need1!}) and (\ref{eq:need2!}), this implies that, for all $1\le j\le \ell$, there exists a polynomial $f_{j,e}(T)\in k[T]$ with $f_{j,e}(\mu^{-3}_j)\ne 0$ such that, for all $0\le i\le 2$, we have
\begin{equation}
\label{eq:traceformula!}
\mathrm{Tr}_{K/J}\left(\frac{1}{(\zeta^{-i}s^{-1}-\psi^{-1}_j)^e}\right) = \frac{f_{j,e}(s^{-3})}{(s^{-3}-\psi^{-3}_j)^e}.
\end{equation}
Since $\mathrm{Tr}_{K/J}(\alpha)=0$, (\ref{eq:alpha3!}) and using that $\left\lfloor \frac{3e_\infty-b}{3}\right\rfloor=e_\infty-1$, (\ref{eq:trace0!}), (\ref{eq:traceinfinity!}) and (\ref{eq:traceformula!}) lead to the equation
\begin{equation}
\label{eq:need3!}
\sum_{e=0}^{\left\lfloor \frac{e_0}{3}\right\rfloor} a_{0,3e} (s^{-1})^{3e} + 
\sum_{e=1}^{e_\infty-1} \frac{a_{\infty,3e}}{(s^{-1})^{3e}} + 
\sum_{j=1}^{\ell} \sum_{e=1}^{3e_j} (a_{j,0,e}+a_{j,1,e}+a_{j,2,e})\frac{f_{j,e}(s^{-3})}{(s^{-3}-\psi^{-3}_j)^e}=0
\end{equation}
in $k(s^{-3})$. But this means that  (\ref{eq:need3!}) must remain true after localizing at various points. If $e_0 \le 0$ then $e_0<0$ and the first sum is zero. If $e_0 > 0$ then localizing at $s=0$, we obtain
$$\mathrm{ord}_{s} \left( \mathrm{Tr}_{K/J}(\alpha)\right) = -3 \left\lfloor \frac{e_0}{3}\right\rfloor$$
unless $a_{0,3\lfloor \frac{e_0}{3}\rfloor}=0$. Using descending induction, we conclude
$$a_{0,3e}=0 \quad \mbox{for all $e\in\{0,\ldots,\lfloor \frac{e_0}{3}\rfloor\}$.}$$
If $e_\infty \le 1$ then the second sum is zero. If $e_\infty > 1$, then, localizing at $s=\infty$, 
we similarly
conclude
$$a_{\infty,3e}=0 \quad \mbox{for all $e\in\{1,\ldots,e_\infty-1\}$.}$$
Fix $j\in\{1,\ldots,\ell\}$. Localizing at $s=\psi_j$, we 
additionally
conclude 
$$a_{j,0,e}+a_{j,1,e}+a_{j,2,e}=0 \quad \mbox{for all $j\in\{1,\ldots,\ell\}$ and all $e\in\{1,\ldots,3e_j\}$.}$$
Therefore, $\alpha$ in  (\ref{eq:alpha3!}) becomes
\begin{eqnarray}
\label{eq:alpha4!}
\alpha &=& \sum_{{ \genfrac {} {}{0pt}{2}{e\in\{1,\ldots, e_0\}}{e\not\equiv 0\mod 3}}} a_{0,e}(s^{-1})^e  \quad + \sum_{{ \genfrac {} {}{0pt}{2}{e\in\{1,\ldots, 3e_\infty-b\}}{e\not\equiv 0\mod 3}}} \frac{a_{\infty,e}}{(s^{-1})^e} \quad + \\
\nonumber
&&\sum_{j=1}^{\ell}  \sum_{e=1}^{3e_j} \left(\frac{a_{j,0,e}}{(s^{-1}-\psi^{-1}_j)^e} + \frac{a_{j,1,e}}{(\zeta^{-1}s^{-1}-\psi^{-1}_j)^e} - \frac{a_{j,0,e}+a_{j,1,e}}{(\zeta^{-2}s^{-1}-\psi^{-1}_j)^e} \right).
\end{eqnarray}
For each integer $e\ge 1$, write $e = 2^{e'}e''$ where $e'\ge 0$ and $e''$ is odd. Since $k$ is algebraically closed of characteristic $2$, for each $a\in \{a_{0,e},a_{\infty,e}\} \cup \{a_{j,0,e},a_{j,1,e}:1\le j\le \ell\}$, there exists $a'\in k$ such that $a=(a')^{2^{e'}}$. 
Define 
\begin{eqnarray*}
h_{\alpha,0,e} &:=& 
\sum_{i=1}^{e'} (a_{0,e}')^{2^{e'-i}}(s^{-1})^{2^{e'-i}e''},\\
h_{\alpha,\infty,e} &:=& 
\sum_{i=1}^{e'} \frac{(a_{\infty,e}')^{2^{e'-i}}}{(s^{-1})^{2^{e'-i}e''}},
\end{eqnarray*}
and, for $j\in\{1,\ldots,\ell\}$, define
$$h_{\alpha,j,e} := 
\sum_{i=1}^{e'} \left(\frac{(a_{j,0,e}')^{2^{e'-i}}}{(s^{-1}-\psi_j^{-1})^{2^{e'-i}e''}}+
\frac{(a_{j,1,e}')^{2^{e'-i}}}{(\zeta^{-1}s^{-1}-\psi_j^{-1})^{2^{e'-i}e''}}-
\frac{(a_{j,0,e}')^{2^{e'-i}}+ (a_{j,1,e}')^{2^{e'-i}}}{(\zeta^{-2}s^{-1}-\psi_j^{-1})^{2^{e'-i}e''}} \right).$$
Note that $h_{\alpha,0,e}=0$ if $e$ is odd or if $a_{0,e}=0$, and that $h_{\alpha,\infty,e}=0$ if $e$ is odd or if $a_{\infty,e}=0$, and that, for $j\in\{1,\ldots,\ell\}$, $h_{\alpha,j,e}=0$ if $e$ is odd or if $a_{j,0,e}=a_{j,1,e}=0$. Define
$$h_\alpha := \sum_{{ \genfrac {} {}{0pt}{2}{e\in\{1,\ldots, e_0\}}{e\not\equiv 0\mod 3}}} h_{\alpha,0,e}\quad +
\sum_{{ \genfrac {} {}{0pt}{2}{e\in\{1,\ldots, 3e_\infty-b\}}{e\not\equiv 0\mod 3}}} h_{\alpha,\infty,e} + \quad
\sum_{j=1}^\ell \sum_{e=1}^{3e_j}h_{\alpha,j,e}.$$
By our earlier calculations, leading to  (\ref{eq:trace0!}), (\ref{eq:traceinfinity!}), (\ref{eq:traceformula!}) and (\ref{eq:alpha4!}), and since the characteristic of $k$ is $2$, this implies that $\mathrm{Tr}_{K/J}(h_\alpha) =\mathrm{Tr}_{K/J}(h_\alpha^2) = 0$. We obtain
\begin{eqnarray*}
\alpha-(h_\alpha^2-h_\alpha) &=&
\sum_{{ \genfrac {} {}{0pt}{2}{e\in\{1,\ldots, e_0\}}{e\not\equiv 0\mod 3}}} a_{0,e}'(s^{-1})^{e''}+
\sum_{ { \genfrac {} {}{0pt}{2}{e\in\{1,\ldots, 3e_\infty-b\}}{e\not\equiv 0\mod 3}} }\frac{a_{\infty,e}'}{(s^{-1})^{e''}} +\\
&&
\sum_{j=1}^{\ell}  \sum_{e=1}^{3e_j} \left(\frac{a_{j,0,e}'}{(s^{-1}-\psi^{-1}_j)^{e''}} + \frac{a_{j,1,e}'}{(\zeta^{-1}s^{-1}-\psi^{-1}_j)^{e''}} - \frac{a_{j,0,e}'+a_{j,1,e}'}{(\zeta^{-2}s^{-1}-\psi^{-1}_j)^{e''}} \right).
\end{eqnarray*}
Since $e''$ is odd for all positive integers $e$, we can multiply both the numerator and the denominator of this expression with the same odd power of $s$ to obtain a right side as in  (\ref{eq:newalpha!}). Since $e = 2^{e'}e''$, where $e'\ge 0$ and $e''$ is odd, we obtain that $e\not\equiv 0\mod 3$ if and only if $e''\not\equiv 0\mod 3$. Therefore, if $\infty\in Y_{\infty,0}$ then $p_\infty \equiv \pm 1\mod 3$, and if $0\in Y_{\infty,0}$ then $p_0 \equiv \pm 1\mod 3$. This completes the proof of Lemma \ref{lem:branchpoints}.
\end{proof}

Based on Lemma \ref{lem:branchpoints}, we introduce the following notation.

\begin{nota} 
\label{not:branchA4}
Suppose Assumption \ref{ass:general} holds, and use Notations \ref{not:general}, \ref{not:functionfields} and \ref{not:specialbranchY}. 

\begin{itemize}
\item[(a)] We have
$$Y_{\mathrm{br}}=Y_{\infty,0}\cup\{y_1,y_1',y_1'',y_2,y_2',y_2'',\dots,y_\ell,y_\ell',y_\ell''\}$$
for some $\ell\ge 0$, where 
each $y\in Y_{\infty,0}$ corresponds to $s^{\epsilon_y}$ for $\epsilon_y$ as in Notation \ref{not:specialbranchY},
and, for $1\le j \le \ell$, there exists $\psi_j\in k^\times$ such that $y_j$ corresponds to $s-\psi_j$, $y_j'$ corresponds to $s-\zeta\psi_j$ and $y_j''$ corresponds to $s-\zeta^2\psi_j$. 
\item[(b)]
There exists $h_\alpha\in K=k(s)$ such that $\mathrm{Tr}_{K/J}(h_\alpha)=\mathrm{Tr}_{K/J}(h_\alpha^2)=0$ and
$$\alpha-(h_\alpha^2-h_\alpha)=s^{p_\infty+\left(\sum_{j=1}^{\ell}(p_j+p_j'+p_j'')\right)-d}\;
\frac{\zeta^{\sum_{j=1}^\ell(p_j'+2p_j'')}\prod_{j=1}^\ell\mu_j^{p_j+p_j'+p_j''}\left(\sum_{i=0}^d c_is^i\right)}{\prod_{j=1}^\ell (s-\psi_j)^{p_j}(s-\zeta\psi_j)^{p_j'}(s-\zeta^2\psi_j)^{p_j''}}$$
satisfying the following conditions: 
\begin{itemize}
\item[(b1)] $\infty \in Y_{\infty,0}$ if and only if $p_\infty>0$;
\item[(b2)] $0\in Y_{\infty,0}$ if and only if $p_0:=d-(p_\infty + \sum_{j=1}^\ell(p_j+p_j'+p_j''))>0$;
\item[(b3)] if $y \in Y_{\infty,0}$ then $p_y \in\{m_y,M_y\}$; 
\item[(b4)] if $\ell\ge 1$ then $p_j\in\{m_{y_j},M_{y_j}\}$, $p_j'\in \{m_{y_j'},M_{y_j'}\}$, $p_j''\in \{m_{y_j''},M_{y_j''}\}$ for $1\le j \le \ell$;
\item[(b5)] $c_d,c_0\in k^{\times}$, and $\sum_{i=0}^d c_is^i$ and $\prod_{j=1}^\ell (s-\psi_j)^{p_j}(s-\zeta\psi_j)^{p_j'}(s-\zeta^2\psi_j)^{p_j''}$ are relatively prime in $k[s]$.
\end{itemize}
\item[(c)] We define
$$h_{\rho \alpha} := \rho h_\alpha\quad\mbox{and}\quad h_{\rho^2 \alpha} := \rho^2 h_\alpha.$$
Applying Remark \ref{rem:important} and using that $v=\rho u$, $\beta=\rho \alpha$ and $\alpha+\beta=\rho^2\alpha$ by Notation \ref{not:functionfields}, we obtain
$$\left\{\begin{array}{rclrcl}
\widetilde{u}&=&u-h_\alpha, \quad & \widetilde{\alpha}&=&\alpha-(h_\alpha^2-h_\alpha),\\
\widetilde{\rho u}&=&\rho u -h_{\rho\alpha}, \quad & \widetilde{\rho\alpha}&=&\rho\alpha-(h_{\rho\alpha}^2-h_{\rho\alpha}),\\
\widetilde{u+\rho u}&=&u+\rho u-h_{\rho^2\alpha}, \quad & \widetilde{\rho^2\alpha}&=&\rho^2\alpha-(h_{\rho^2\alpha}^2-h_{\rho^2\alpha}).
\end{array}\right.$$
Moreover, for $y\in Y_{\mathrm{br}}$, we obtain
$$p_{\alpha,y}=-\operatorname{ord}_{y}(\widetilde{\alpha}), \quad p_{\rho\alpha,y}=-\operatorname{ord}_{y}(\widetilde{\rho\alpha})\quad \mbox{and}\quad p_{\rho^2\alpha,y}=-\operatorname{ord}_{y}(\widetilde{\rho^2\alpha}).$$
\end{itemize}
\end{nota}

The following result provides the values introduced in Remark \ref{rem:BC2023} and Notation \ref{not:deltalambda}.

\begin{lemma}
\label{lem:valuesbranchpoints}
Suppose Assumption $\ref{ass:general}$ holds, and use Notations $\ref{not:general}$, $\ref{not:functionfields}$, $\ref{not:specialbranchY}$ and $\ref{not:branchA4}$.
Fix $y\in Y_{\mathrm{br}}$. The values of $m_y,M_y$ from Remark $\ref{rem:BC2023}$ and of $\lambda_y, \delta_y$ from Notation $\ref{not:deltalambda}$ are as follows:
\begin{itemize}
\item[(a)] If $y\in Y_{\infty,0}$, then $m_y=M_y=p_y$, $\lambda_y=\zeta^{\epsilon_yp_y}\in\{\zeta,\zeta^2\}$, and $\delta_y\in \{0,1,\cdots,\lfloor \frac{m_y}{4}\rfloor\}$. Moreover,
$$u_y=\widetilde{u}, \alpha_y = \widetilde{\alpha},\quad v_y=\widetilde{\rho u}, \beta_y = \widetilde{\rho\alpha}.$$
\item[(b)] Let $j\in\{1,\ldots,\ell\}$, define $\psi:=\psi_j$, and define $y:=y_j$, $y':=y_j'$ and $y'':=y_j''$, so that the uniformizers are $\pi_y=(s-\psi)$, $\pi_{y'}=(s-\zeta\psi)$ and $\pi_{y''}=(s-\zeta^2\psi)$.
\begin{itemize}
\item[(i)] If $m_y=M_y$ then $m_y=m_{y'}=m_{y''}=M_y=M_{y'}=M_{y''}=p_j=p_j'=p_j''$, $$\lambda_y\in k-\{0,1\},\quad \lambda_{y'}=\frac{1+\lambda_y}{\lambda_y}\quad\mbox{and}\quad\lambda_{y''} = \frac{1}{1+\lambda_y},$$
and $\delta_y=\delta_{y'}=\delta_{y''}\in \{0,1,\cdots,\lfloor \frac{m_y}{4}\rfloor\}$.
Moreover,
$$u_y=u_{y'}=u_{y''}=\widetilde{u}, \alpha_y=\alpha_{y'}=\alpha_{y''} = \widetilde{\alpha},\quad v_y=v_{y'}=v_{y''}=\widetilde{\rho u}, \beta_y=\beta_{y'}=\beta_{y''} = \widetilde{\rho\alpha}.$$
\item[(ii)] If $m_y<M_y$, then $m_y=m_{y'}=m_{y''}<M_y=M_{y'}=M_{y''}$,
$\{\lambda_y,\lambda_{y'},\lambda_{y''}\}=\{0,1,\infty\}$,
and $\delta_y=\delta_{y'}=\delta_{y''}=-1$. 
Moreover:
\begin{itemize}
\item[(1)] If $m_y=p_{\alpha,y}$, then $m_y=p_j$, $M_y=p_j'=p_j''$, and
$$\begin{array}{ll}
u_y=\widetilde{u}, \alpha_y = \widetilde{\alpha}, & v_y=\widetilde{\rho u}, \beta_y = \widetilde{\rho\alpha},\\
u_{y'}= \widetilde{u+\rho u}, \alpha_{y'}=\widetilde{\rho^2\alpha}, &v_{y'}=\widetilde{\rho u}, \beta_{y'} = \widetilde{\rho\alpha},\\
u_{y''}=\widetilde{\rho u}, \alpha_{y''} = \widetilde{\rho\alpha}, & v_{y''}= \widetilde{u}, \beta_{y''}=\widetilde{\alpha}.
\end{array}$$
\item[(2)] If $m_y=p_{\rho\alpha,y}$, then $m_y=p_j'$, $M_y=p_j=p_j''$, and
$$\begin{array}{ll}
u_y=\widetilde{\rho u}, \alpha_y = \widetilde{\rho\alpha}, & v_y= \widetilde{u}, \beta_y=\widetilde{\alpha},\\
u_{y'}=\widetilde{u}, \alpha_{y'} = \widetilde{\alpha}, & v_{y'}=\widetilde{\rho u}, \beta_{y'} = \widetilde{\rho\alpha},\\
u_{y''}= \widetilde{u+\rho u}, \alpha_{y''}=\widetilde{\rho^2\alpha}, &v_{y''}=\widetilde{\rho u}, \beta_{y''} = \widetilde{\rho\alpha}.
\end{array}$$
\item[(3)] If $m_y=p_{\rho^2\alpha,y}$, then $m_y=p_j''$, $M_y=p_j=p_j'$,  and
$$\begin{array}{ll}
 u_y= \widetilde{u+\rho u}, \alpha_y=\widetilde{\rho^2\alpha}, &v_y=\widetilde{\rho u}, \beta_y = \widetilde{\rho\alpha},\\
u_{y'}=\widetilde{\rho u}, \alpha_{y'} = \widetilde{\rho\alpha}, & v_{y'}= \widetilde{u}, \beta_{y'}=\widetilde{\alpha},\\
u_{y''}=\widetilde{u}, \alpha_{y''} = \widetilde{\alpha}, & v_{y''}=\widetilde{\rho u}, \beta_{y''} = \widetilde{\rho\alpha}.
\end{array}$$
\end{itemize}
\end{itemize}
\end{itemize}
\end{lemma}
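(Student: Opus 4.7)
The plan is as follows. The common thread is that $\rho$ acts on $K=k(s)$ by $s\mapsto\zeta s$, so $\widetilde{\rho\alpha}(s)=\widetilde{\alpha}(\zeta s)$ and $\widetilde{\rho^2\alpha}(s)=\widetilde{\alpha}(\zeta^2 s)$. Consequently, a pole of $\widetilde{\alpha}$ of order $N$ at the point with $s=\mu$ yields poles of $\widetilde{\rho\alpha}$ and $\widetilde{\rho^2\alpha}$ of the same order $N$ at the points with $s=\zeta^2\mu$ and $s=\zeta\mu$, respectively. Using the exponents $p_\infty,p_0,p_j,p_j',p_j''$ from Notation \ref{not:branchA4}(b), this determines $p_{\alpha,y},p_{\rho\alpha,y},p_{\rho^2\alpha,y}$ at every $y\in Y_{\mathrm{br}}$ and, via Remarks \ref{rem:BC2023} and \ref{rem:important}, the values $m_y,M_y$ together with the identifications of $u_y,\alpha_y,v_y,\beta_y$ in all three subcases (a), (b)(i), and (b)(ii)(1)--(3).

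For Part (a) with $y\in Y_{\infty,0}$, the point is $\rho$-fixed, so all three orders coincide and Remark \ref{rem:important} puts us in case (i) with the claimed $u_y,\alpha_y,v_y,\beta_y$. To compute $\lambda_y$, I would use the leading-coefficient equation $\theta_{y,0}^2=\beta_{y,0}/\alpha_{y,0}$ from Remark \ref{rem:doweneedthis?}: direct substitution of $s\mapsto\zeta s$ in the leading Laurent term of $\widetilde{\alpha}$ at $y$ (with uniformizer $\pi_y=s^{\epsilon_y}$) gives $\beta_{y,0}/\alpha_{y,0}=\zeta^{-\epsilon_y p_y}$, and the unique square root in characteristic $2$ (via $\zeta^{1/2}=\zeta^2$) yields $\lambda_y=\zeta^{\epsilon_y p_y}$, which lies in $\{\zeta,\zeta^2\}$ by $p_y\equiv\pm 1\pmod 3$ from Lemma \ref{lem:branchpoints}(iv). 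The range $\delta_y\in\{0,\ldots,\lfloor m_y/4\rfloor\}$ is immediate from Notation \ref{not:deltalambda}(i).

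For Part (b)(i) at $y_j$ with $p_j=p_j'=p_j''=:p$, denote by $A_j,A_{j'},A_{j''}$ the leading Laurent coefficients of $\widetilde{\alpha}$ at $y_j,y_j',y_j''$; the $\rho$-substitution gives $\lambda_{y_j}^2=\zeta^{-p}A_{j'}/A_j$ and the cyclic analogs for $\lambda_{y_j'}^2,\lambda_{y_j''}^2$. The critical input is the global trace identity $\widetilde{\alpha}+\widetilde{\rho\alpha}+\widetilde{\rho^2\alpha}=\mathrm{Tr}_{K/J}(\widetilde{\alpha})=0$, guaranteed by our choice of $h_\alpha$ in Notation \ref{not:branchA4}(b), whose $(s-\psi_j)^{-p}$-coefficient reads $A_j+\zeta^{-p}A_{j'}+\zeta^{-2p}A_{j''}=0$. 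Substituting into the $\lambda^2$-formulas and taking unique characteristic-$2$ square roots yields $\lambda_{y_j'}=(1+\lambda_{y_j})/\lambda_{y_j}$ and $\lambda_{y_j''}=1/(1+\lambda_{y_j})$; the bound $\lambda_{y_j}\in k-\{0,1\}$ is built into Remark \ref{rem:doweneedthis?}. The equality $\delta_{y_j}=\delta_{y_j'}=\delta_{y_j''}$ follows by applying the same trace argument to higher-order Laurent coefficients, combined with the recursion of Remark \ref{rem:doweneedthis?} propagating the equality to all $\theta_{y,i}$. For Part (b)(ii), the three sub-subcases (1), (2), (3) correspond to which of $p_{\alpha,y_j},p_{\rho\alpha,y_j},p_{\rho^2\alpha,y_j}$ is the strict minimum; tracing, via the cyclic rotation at $y_j',y_j''$, which of cases (ii)(a), (ii)(b), (ii)(c) of Remark \ref{rem:BC2023} applies at each of the three points and invoking Remark \ref{rem:important} yields the claimed tables of $u_y,\alpha_y,v_y,\beta_y$, while $\lambda_y\in\{\infty,0,1\}$ and $\delta_y=-1$ come directly from Notation \ref{not:deltalambda}(ii).

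The main obstacle is the derivation in Part (b)(i) of the fractional-linear relations among $\lambda_{y_j},\lambda_{y_j'},\lambda_{y_j''}$, which requires not just the local equation of Remark \ref{rem:doweneedthis?} but also the global trace-zero condition on $\widetilde{\alpha}$ and careful bookkeeping of the $\zeta$-twists introduced by substituting $s\mapsto\zeta s$; the remaining assertions reduce to systematic case-checks from Remark \ref{rem:important}.
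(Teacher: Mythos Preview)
Your proposal is correct and follows essentially the same approach as the paper: both arguments use the $\rho$-action on uniformizers to relate the orders $p_{\alpha,y},p_{\rho\alpha,y},p_{\rho^2\alpha,y}$ at the cyclically permuted points, invoke Remark~\ref{rem:important} to read off $m_y,M_y,u_y,v_y,\alpha_y,\beta_y$, compute $\lambda_y$ from $\lambda_y^2=\beta_{y,0}/\alpha_{y,0}$ of Remark~\ref{rem:doweneedthis?}, and in part~(b)(i) exploit the global trace identity $\mathrm{Tr}_{K/J}(\widetilde{\alpha})=0$ (Lemma~\ref{lem:branchpoints}(iii)) to derive the fractional-linear relations among $\lambda_{y_j},\lambda_{y_j'},\lambda_{y_j''}$. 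The only cosmetic difference is bookkeeping: the paper expands both $\widetilde{\alpha}$ and $\widetilde{\rho\alpha}$ at the single point $y$ and then transports via $\rho$ to $y',y''$, whereas you parametrize by the leading coefficients $A_j,A_{j'},A_{j''}$ of $\widetilde{\alpha}$ at each of the three points; the two calculations are equivalent.
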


\begin{proof}
We first prove part (a). Suppose $y\in Y_{\infty,0}$. Since $\infty$ corresponds to $s^{-1}$ and $0$ corresponds to $s$, we have $p_y=p_{\alpha,y}=p_{\rho\alpha,y}=p_{\rho^2\alpha,y}$.
Therefore, we obtain from Remark \ref{rem:important} and Notation \ref{not:deltalambda} that $m_y=M_y=p_y$, and $\delta_y\in \{0,1,\cdots,\lfloor \frac{m_y}{4}\rfloor\}$. In particular, we obtain from Remark \ref{rem:important}, using Notation \ref{not:branchA4}, that
\begin{equation}
\label{eq:blimy}
u_y=\widetilde{u}, \alpha_y = \widetilde{\alpha},\quad v_y=\widetilde{\rho u}, \beta_y = \widetilde{\rho\alpha}.
\end{equation}
It remains to show $\lambda_y=\zeta^{\epsilon_yp_y}\in\{\zeta,\zeta^2\}$.
By Remark \ref{rem:doweneedthis?},
$$\beta_{y,0} + \alpha_{y,0}\theta_{y,0}^2 = 0,$$
where, by Notation \ref{not:deltalambda}, $\lambda_y=\theta_{y,0}$. Using Notation \ref{not:branchA4}, parts (b) and (c), and (\ref{eq:blimy}), we obtain
$$\lambda_y^2=\theta_{y,0}^2=\zeta^{-\epsilon_yp_y}$$
for $\epsilon_y$ as in Notation \ref{not:specialbranchY}, which means $\lambda_y=\zeta^{\epsilon_yp_y}$.
By Lemma \ref{lem:branchpoints}(iv), we get $\lambda_y\in\{\zeta,\zeta^2\}$, which finishes the proof of part (a).

For the proof of part (b), we use that by Notation \ref{not:branchA4},
\begin{equation}
\label{eq:needed?}
\left\{ \begin{array}{l@{\,=\;}rl@{\,=\;}rl@{\,=\;}r}
p_{\alpha,y} & p_j, & p_{\rho\alpha,y} & p_j', & p_{\rho^2\alpha,y} & p_j'',\\
p_{\alpha,y'} & p_j', & p_{\rho\alpha,y'} & p_j'', & p_{\rho^2\alpha,y'} & p_j,\\
p_{\alpha,y''} & p_j'', & p_{\rho\alpha,y''} & p_j, & p_{\rho^2\alpha,y''} & p_j'.
\end{array}\right.
\end{equation}

We first consider part (b)(i), i.e. $m_y=M_y$. By (\ref{eq:needed?}), this implies 
$$m_y=m_{y'}=m_{y''}=M_y=M_{y'}=M_{y''}=p_j=p_j'=p_j''.$$
Hence we obtain from Remark \ref{rem:important} and Notation \ref{not:branchA4}(c) that
$$u_y=u_{y'}=u_{y''}=\widetilde{u}, \alpha_y=\alpha_{y'}=\alpha_{y''} = \widetilde{\alpha},\quad v_y=v_{y'}=v_{y''}=\widetilde{\rho u}, \beta_y=\beta_{y'}=\beta_{y''} = \widetilde{\rho\alpha}.$$
In particular, it follows from Notation \ref{not:deltalambda} that $\lambda_y\in k-\{0,1\}$ and that $\delta_y\in \{0,1,\cdots,\lfloor \frac{m_y}{4}\rfloor\}$. To show the remaining statements in part (b)(i), we write
\begin{eqnarray}
\label{eq:oyy1}
\widetilde{\alpha} (s-\psi)^{m_y}&=& \sum_{i\ge 0} \alpha_i (s-\psi)^i,\\
\label{eq:oyy2}
\widetilde{\rho\alpha} (s-\psi)^{m_y}&=& \sum_{i\ge 0} \beta_i (s-\psi)^i,
\end{eqnarray}
which gives us expressions for $\alpha_y$ and $\beta_y$ in terms of $\pi_y$.
We note that $\alpha_0\ne 0$, $\beta_0\ne 0$ and $\alpha_0+\beta_0\ne 0$ by Remark \ref{rem:doweneedthis?}.
Since $\widetilde{\alpha}+\widetilde{\rho\alpha}+\widetilde{\rho^2\alpha} = \mathrm{Tr}_{K/J}(\widetilde{\alpha})=0$ by Lemma \ref{lem:branchpoints}, we can apply $\rho$ and $\rho^2$ to (\ref{eq:oyy1}) and (\ref{eq:oyy2}) to 
get similar expressions for $\alpha_{y'},~\alpha_{y''}$ and $\beta_{y'},~\beta_{y''}$.
We thus obtain
$$\begin{array}{ll}
\alpha_y=\pi_y^{-m_y} \sum_{i\ge 0} \alpha_i \pi_y^i, &
\beta_y= \pi_y^{-m_y} \sum_{i\ge 0} \beta_i \pi_y^i,\\
\alpha_{y'}=\pi_{y'}^{-m_y} \sum_{i\ge 0} \beta_i\zeta^{-2(m_y-i)} \pi_{y'}^i, &
\beta_{y'}= \pi_{y'}^{-m_y} \sum_{i\ge 0} (\alpha_i+\beta_i)\zeta^{-2(m_y-i)} \pi_{y'}^i,\\
\alpha_{y''}=\pi_{y''}^{-m_y} \sum_{i\ge 0} (\alpha_i+\beta_i)\zeta^{-(m_y-i)} \pi_{y''}^i, &
\beta_{y''}= \pi_{y''}^{-m_y} \sum_{i\ge 0} \alpha_i\zeta^{-(m_y-i)} \pi_{y''}^i.
\end{array}$$
Using Remark \ref{rem:doweneedthis?}, we can use these expressions to prove
$$\lambda_y^2=\frac{\beta_0}{\alpha_0},\quad \lambda_{y'}^2=\frac{\alpha_0+\beta_0}{\beta_0}\quad\mbox{and}\quad
\lambda_{y''}^2=\frac{\alpha_0}{\alpha_0+\beta_0}.$$
In particular, this implies $\lambda_{y'}=\frac{1+\lambda_y}{\lambda_y}$ and $\lambda_{y''} = \frac{1}{1+\lambda_y}$.
Using again Remark \ref{rem:doweneedthis?} together with an inductive argument, we moreover obtain $\delta_y=\delta_{y'}=\delta_{y''}$, which completes the proof of part (b)(i).

Finally, we consider part (b)(ii), i.e. $m_y<M_y$. By  Remark \ref{rem:important} and (\ref{eq:needed?}), this implies 
$$m_y=m_{y'}=m_{y''}<M_y=M_{y'}=M_{y''}.$$
In other words, $\lambda_y,\lambda_{y'},\lambda_{y''}\in\{0,1,\infty\}$, which immediately implies $\delta_y=\delta_{y'}=\delta_{y''}=-1$. 
Using Remark \ref{rem:important} and Notation \ref{not:branchA4} together with (\ref{eq:needed?}), the remaining statements in part (b)(ii) follow, finishing the proof of Lemma \ref{lem:valuesbranchpoints}.
\end{proof}

Similarly to \cite[Definition 3.5 and Lemma 3.6]{BleherCamacho2023}, we now construct a $k$-basis of $\HH^0(X,\Omega_X)$ using our above notation. In particular, if $r=\#Y_{\infty,0}=1$ then our new basis is the same as in \cite{BleherCamacho2023}.

\begin{definition} 
\label{def:uniformky}
For $y\in Y$, define the uniformizer $\pi_y$ as in (\ref{eq:P1uniformizer}), and define
$$k_y:=\left\{\begin{array}{rl}
0 & \mbox{ if } y\neq\infty,\\
-2 & \mbox{ if } y=\infty.
\end{array}\right.$$
\end{definition}

The next definition is a variation of \cite[Definition 3.5]{BleherCamacho2023}.

\begin{definition} 
\label{def:3.5}
Suppose Assumption $\ref{ass:general}$ holds, and use Notations $\ref{not:general}$, $\ref{not:functionfields}$, $\ref{not:specialbranchY}$ and $\ref{not:branchA4}$. Write
$$Y_{\mathrm{br}} =Y_{\infty,0}\cup \{y_1,y_1',y_1'',\ldots,y_\ell,y_\ell',y_\ell''\}$$
as in Notation $\ref{not:branchA4}$(a), and let $r=\# Y_{\infty,0}$. 
Fix $y\in Y_{\mathrm{br}}$.
Let $m_y,~M_y$, $\lambda_y,~\delta_y$ and $u_y, ~ v_y$ be as in Lemma \ref{lem:valuesbranchpoints}.
Moreover, 
let $\theta_y(j),w_y(j)$ ($j\ge 0$) be as in Notation \ref{not:uniform}, 
and let $\pi_y,~k_y$ be as in Definition \ref{def:uniformky}.
\begin{enumerate}
\item[(a)] Define $\mu_{y,1}:=\bigg\lfloor\frac{m_y+3}{4}\bigg\rfloor,~\mu_{y,2}:=\bigg\lfloor\frac{2m_y+3}{4}\bigg\rfloor,~\mu_{y,3}:=\bigg\lfloor\frac{3m_y+3}{4}\bigg\rfloor, \mbox{ and } \nu_y:=\frac{M_y-m_y}{2}.$ Moreover, define 
\begin{eqnarray*}
a(y)&:=&\begin{cases}
0 & \mbox{if } r\ge 1\mbox{ and }y=\infty , \\
2 & \mbox{if } r = 0 \mbox{ and } y=y_1,\\
1 & \mbox{otherwise},
\end{cases}\\
b(y)&:=&\begin{cases}
0 & \mbox{if $r\ge 1$ or if $r=0$ and $y=y_1$},\\
1 & \mbox{otherwise}.
\end{cases}
\end{eqnarray*}
In other words, $b(y)=1$ if and only if $r=0$ and 
$y\in Y_{\mathrm{br}}-\{y_1\}$.
\item[(b)] Define
$$\begin{array}{rclrcl}
{f_{y,1,i_1}} & {:=} & {\pi_y^{-i_1},} & a(y)+b(y)\leq i_1 &\leq& \mu_{y,3}+\nu_y+k_y, \\
{f_{y,2,i_2}} & {:=} & {\pi_y^{-i_2}u_y,} & a(y)+b(y)\leq i_2 &\leq& \mu_{y,1}+\nu_y+k_y, \\
{f_{y,3,i_3}} & {:=} & {\pi_y^{-i_3}v_y,} & a(y)+b(y)\leq i_3 &\leq& \mu_{y,1}+k_y, \\
{f_{y,3,i_3}} & {:=} & {\pi_y^{-i_3}w_y(i_3-\mu_{y,1}-k_y-1),} \quad& \mu_{y,1}+k_y+1\leq i_3 &\leq &\mu_{y,2}+k_y.
\end{array}$$
If $r=0$, define additionally, for 
$y\in Y_{\mathrm{br}}-\{y_1\}$,
$$\begin{array}{rcl}
{f_{y,1,1}} & {:=} & {\left\{\begin{array}{rl}
\pi_{y_1}^{-1}\pi_{y_1'}^{-1} & \mbox{if } y=y_1',\\
\pi_{y_1'}^{-1}\pi_{y_1''}^{-1} & \mbox{if } y=y_1'',\\
\pi_{y_1}^{-1}\pi_{y_j}^{-1} & \mbox{if } y=y_j\mbox{ for } j\ge 2,\\
\pi_{y_1'}^{-1}\pi_{y_j'}^{-1} & \mbox{if } y=y_j'\mbox{ for } j\ge 2,\\
\pi_{y_1''}^{-1}\pi_{y_j''}^{-1} & \mbox{if } y=y_j''\mbox{ for } j\ge 2,
\end{array}\right.}\\
{f_{y,2,1}} & {:=} & {f_{y,1,1}\,u_y},\\
{f_{y,3,1}} & {:=} & {f_{y,1,1}\,v_y}.
\end{array}$$
Define 
\begin{eqnarray*}
\mathcal{B}_{y,1} &:=& \{f_{y,1,i_1}\; : \; a(y)\le i_1 \le \mu_{y,3}  + \nu_y + k_y \} ,\\
\mathcal{B}_{y,2} &:=& \{f_{y,2,i_2}\; : \; a(y)\le i_2 \le \mu_{y,1} + \nu_y + k_y \},\\
\mathcal{B}_{y,3} &:=& \{f_{y,3,i_3}\; : \; a(y)\le i_3 \le \mu_{y,2}  + k_y \}.
\end{eqnarray*}
\item[(c)] For each $y\in Y_{\mathrm{br}}$, define $\mathcal{B}_y:=\mathcal{B}_{y,1}\cup\mathcal{B}_{y,2}\cup\mathcal{B}_{y,3}$, and 
$$\mathcal{B}:=\bigcup_{y\in Y_{\mathrm{br}}}\mathcal{B}_y.$$
\end{enumerate}
\end{definition}

\begin{remark}
\label{rem:cuteargument}
Let $Y_{\mathrm{br}} =Y_{\infty,0}\cup \{y_1,y_1',y_1'',\ldots,y_\ell,y_\ell',y_\ell''\}$ be as in Definition \ref{def:3.5} and suppose $\ell \ge 1$. 
We order $Y_{\mathrm{br}} -Y_{\infty,0}$ as follows:
$$y_1<y_1'<y_1''<y_2<y_2'<y_2''<\cdots < y_\ell < y_\ell' < y_\ell''.$$
Define
$$\pi_{y,\widetilde{y}}:= \pi_{y}^{-1} \pi_{\widetilde{y}}^{-1}\quad \mbox{for $y<\widetilde{y}$ in $Y_{\mathrm{br}}-Y_{\infty,0}$,}$$
and let $V_\ell$ be the $k$-span of all $\pi_{y,\widetilde{y}}$. We claim that $\mathrm{dim}_k V_\ell = 3\ell -1$.

To see this, consider the Weil divisor 
$$D:= -2\infty + \sum_{j=1}^{\ell} (y_j+y_j'+y_j'') $$
on $\mathbb{P}^1_k$. It follows that $V_\ell\subseteq \HH^0(\mathbb{P}^1_k,\mathcal{O}_{\mathbb{P}^1_k}(D))$. Since the genus of $\mathbb{P}^1_k$ is zero,  the Riemann-Roch theorem implies 
$$\mathrm{dim}_k \HH^0(\mathbb{P}^1_k,\mathcal{O}_{\mathbb{P}^1_k}(D)) - \mathrm{dim}_k \HH^1(\mathbb{P}^1_k,\mathcal{O}_{\mathbb{P}^1_k}(D)) = \mathrm{deg}(D) +1.$$
By Serre duality, we have
$$\HH^1(\mathbb{P}^1_k,\mathcal{O}_{\mathbb{P}^1_k}(D)) = 
\mathrm{Hom}_k(\HH^0(\mathbb{P}^1_k,\Omega_{\mathbb{P}^1_k}\otimes\mathcal{O}_{\mathbb{P}^1_k}(-D)),k)=
\mathrm{Hom}_k(\HH^0(\mathbb{P}^1_k,\mathcal{O}_{\mathbb{P}^1_k}(-2\infty-D)),k)=0.$$
Since $\mathrm{deg}(D) = 3\ell - 2$, we obtain that $\mathrm{dim}_k V_\ell \le 3\ell -1$.
To show equality, we first note that if $\ell=1$ then $\mathrm{dim}_k V_\ell = 2 = 3\ell -1$, since $\pi_{y_1,y_1'},\pi_{y_1',y_1''}$ are $k$-linearly independent. If $\ell >1$, then we realize that $V_\ell$ is the direct sum of $V_{\ell -1}$ and the subspace $V_\ell'$ spanned by all elements of the form $\pi_{y,\widetilde{y}}$, for $y<\widetilde{y}$ and $\widetilde{y}\in\{y_\ell,y_\ell',y_\ell''\}$. By induction, $\mathrm{dim}_k V_{\ell-1} = 3(\ell-1) -1$. On the other hand, since $\pi_{y_1,y_\ell},\pi_{y_1,y_\ell'},\pi_{y_1,y_\ell''}$ are $k$-linearly independent and in $V_\ell'$, it follows that $\mathrm{dim}_k V_\ell\ge 3\ell-1$, which proves the claim.

This has the following consequence in Definition \ref{def:3.5}: If $r=0$ then $\{f_{y,1,1}\,:\, y\in Y_{\mathrm{br}}-\{y_1\}\}$ is a $k$-basis of $V_\ell$. More precisely, applying $\mathrm{ord}_y$ to a $k$-linear combination of these elements, for descending $y=y_\ell'', y_\ell', y_\ell, \ldots,y_2'',y_2',y_2,y_1'',y_1'$, we see that these elements are $k$-linearly independent. 
\end{remark}

The next result is a variation of \cite[Lemma 3.6]{BleherCamacho2023}.

\begin{lemma} 
\label{lem:Hbasis}
Suppose Assumption $\ref{ass:general}$ holds, and use Notations $\ref{not:general}$, $\ref{not:functionfields}$, $\ref{not:specialbranchY}$, $\ref{not:branchA4}$ and Definitions $\ref{def:uniformky}$, $\ref{def:3.5}$.
A $k$-basis of $\HH^0(X,\Omega_X)$ is given by 
$$\{fds:f\in\mathcal{B}\}.$$ 
\end{lemma}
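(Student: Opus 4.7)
The plan is to follow the strategy of \cite[Lemma 3.6]{BleherCamacho2023}, which proves the analogous statement in the $r=1$ case, and to adapt it to cover both $r=2$ (where there is a second totally ramified branch point at $0$) and, more substantially, $r=0$ (where neither $\infty$ nor $0$ lies in $Y_{\mathrm{br}}$). The proof breaks into three parts: (I) showing $\{f\,ds : f\in\mathcal{B}\}\subseteq \HH^0(X,\Omega_X)$; (II) checking the count $\#\mathcal{B}=\dim_k\HH^0(X,\Omega_X)=g(X)$; and (III) establishing $k$-linear independence.

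For (I), I would verify $\operatorname{ord}_x(f\,ds)\geq 0$ at every closed point $x$ of $X$ and every $f\in\mathcal{B}$. Since $ds$ is regular on $Y-\{\infty\}$ and each $f\in\mathcal{B}$ is a rational function whose poles on $Y$ lie in $Y_{\mathrm{br}}\cup\{\infty\}$, the only points requiring attention are those of $X$ above $y\in Y_{\mathrm{br}}\cup\{\infty\}$. At a point $x$ of $X$ above $y\in Y_{\mathrm{br}}$, the extension $\pi_1$ is totally ramified, so one can build a uniformizer at $x$ out of $\pi_y, u_y, v_y, w_y(j)$ and directly compute orders using Remark \ref{rem:BC2023} and Notation \ref{not:uniform} (in particular $\operatorname{ord}_x(u_y)=-2m_y$, $\operatorname{ord}_x(v_y)=-2M_y$, and $\operatorname{ord}_x(w_y)=-m_y-2(M_y-m_y)$ in the appropriate case). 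The bounds $a(y)+b(y)\leq i_1\leq \mu_{y,3}+\nu_y+k_y$ and their analogues in Definition \ref{def:3.5}(b) are calibrated precisely so that these orders are nonnegative. For $y=\infty\notin Y_{\mathrm{br}}$ (the $r\leq 1$ subcase) or any $y\in Y-Y_{\mathrm{br}}$ where the function $f$ has a pole, the verification is immediate from the fact that $\pi_1$ is unramified there.

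For (II), the Riemann--Hurwitz formula applied to $\pi_1: X\to Y=\mathbb{P}^1_k$ expresses $g(X)$ in terms of the data $\{m_y, M_y\}_{y\in Y_{\mathrm{br}}}$, and a direct summation from Definition \ref{def:3.5} gives $\#\mathcal{B}$ matching this value; when $r=0$ the extra elements $f_{y,1,1}, f_{y,2,1}, f_{y,3,1}$ contribute exactly the terms that compensate for the shift $a(y)+b(y)$ in the lower bound of $i_1,i_2,i_3$. For (III), the key observation is that for each fixed $y\in Y_{\mathrm{br}}$, the elements of $\mathcal{B}_y$ produce differentials whose pole orders at the point $x$ above $y$ are pairwise distinct. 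A descending order argument, processing branch points one at a time, reduces any hypothetical nontrivial relation to the zero relation.

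The main obstacle is the case $r=0$, which does not appear in \cite{BleherCamacho2023}. In this case, the naive elements $\pi_y^{-1}$, $\pi_y^{-1}u_y$, $\pi_y^{-1}v_y$ would not give holomorphic differentials, since $\pi_y^{-1}$ has a pole at $\infty\in Y-Y_{\mathrm{br}}$ and hence the corresponding differentials on $X$ have poles at the unramified points above $\infty$. The resolution is to replace these by the paired functions $f_{y,1,1}=\pi_{\widetilde y}^{-1}\pi_y^{-1}$ (and their products with $u_y, v_y$) of Definition \ref{def:3.5}(b), where $\widetilde y$ is chosen within the same $\rho$-orbit structure. The crucial dimension check is Remark \ref{rem:cuteargument}: using Riemann--Roch and Serre duality on $\mathbb{P}^1_k$, the $k$-span of $\{f_{y,1,1}:y\in Y_{\mathrm{br}}-\{y_1\}\}$ has dimension exactly $3\ell-1$, which is precisely the deficit introduced by having $a(y_1)=2$ rather than $a(y_1)=0$, and by imposing $b(y)=1$ for the other $y$. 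Once these paired functions are shown to remain holomorphic after multiplication by $u_y$ or $v_y$ (using that $\pi_{\widetilde y}^{-1}$ is regular at the points above $y$ and vice versa), the proof proceeds uniformly across all three cases $r=0,1,2$.
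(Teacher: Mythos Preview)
Your proposal is correct and follows essentially the same route as the paper's proof: defer to \cite[Lemma 3.6]{BleherCamacho2023} for $r\ge 1$, and for $r=0$ verify holomorphicity at points over $\infty$ and over the branch locus separately, invoke Remark~\ref{rem:cuteargument} for the linear independence of the paired elements $f_{y,1,1}$, and close with the Riemann--Hurwitz count. One small slip: when $r=0$ and $y\in Y_{\mathrm{br}}$, the function $\pi_y^{-1}=(s-\mu)^{-1}$ has a \emph{zero} at $\infty$, not a pole; the obstruction is that $ds$ has a pole of order $2$ at $\infty$, so $\pi_y^{-1}\,ds$ has order $1-2=-1$ there, and this is why you need the paired elements $\pi_{\widetilde y}^{-1}\pi_y^{-1}$ (which vanish to order $2$ at $\infty$).
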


\begin{proof}
If $r\ge 1$, this is proved using the same arguments as in the proof of \cite[Lemma 3.6]{BleherCamacho2023}.

Suppose now that $r=0$. Then $\infty\not\in Y_{\mathrm{br}}$ and we have, for all $x_\infty\in X$ above $\infty$,
$$-e_{x_\infty/\infty}k_\infty-d_{x_\infty/\infty}=2,$$
since the ramification index $e_{x_\infty/\infty}=1$ and the different exponent $d_{x_\infty/\infty}=0$. Since, for all $y\in Y_{\mathrm{br}}$, $\mathrm{ord}_{x_\infty}(\pi_y^{-a})=a$ for all integers $a$ and since $\mathrm{ord}_{x_\infty}(u_y)$ and $\mathrm{ord}_{x_\infty}(v_y)$ are greater than or equal to 0, it follows that, for all $f\in\mathcal{B}$,
$$\mathrm{ord}_{x_\infty}(f)\ge 2 = -e_{x_\infty/\infty}k_\infty-d_{x_\infty/\infty}.$$
Suppose next that $\widetilde{y}\in Y-Y_{\mathrm{br}}$, $\widetilde{y}\ne \infty$, and let $x_{\widetilde{y}}\in X$ be above $\widetilde{y}$. Since, for all $y\in Y_{\mathrm{br}}$, $\mathrm{ord}_{x_{\widetilde{y}}}(\pi_y^{-a})= 0$ for all integers $a$ and  $\mathrm{ord}_{x_{\widetilde{y}}}(u_y)$ and $\mathrm{ord}_{x_{\widetilde{y}}}(v_y)$ are greater than or equal to 0, it follows that, for all $f\in\mathcal{B}$,
$$\mathrm{ord}_{x_{\widetilde{y}}}(f)\ge 0 = -e_{x_{\widetilde{y}}/\widetilde{y}}k_{\widetilde{y}}-d_{x_{\widetilde{y}}/\widetilde{y}}.$$
Finally, suppose $\widetilde{y}\in Y_{\mathrm{br}}$. Using similar arguments to the ones used in the proof of \cite[Lemma 3.6]{BleherCamacho2023}, it follows that
$$\mathrm{ord}_{x_{\widetilde{y}}}(f)\ge  -e_{x_{\widetilde{y}}/\widetilde{y}}k_{\widetilde{y}}-d_{x_{\widetilde{y}}/\widetilde{y}}$$
for all $f\in \mathcal{B} - \{f_{y_i,j,1}\,:\, 2\le i \le 3\ell,1\le j \le 3\}$. Suppose $2\le i \le 3\ell$ and $1\le j \le 3$. Since $\mathrm{ord}_{x_{\widetilde{y}}}(f_{y_i,1,1})\ge -4$ and since $\mathrm{ord}_{x_{\widetilde{y}}}(u_{y_i})$ and $\mathrm{ord}_{x_{\widetilde{y}}}(v_{y_i})$ are at least as big as $-2M_{\widetilde{y}}$, we conclude that, for all $2\le i \le 3\ell$ and $1\le j \le 3$, 
$$\mathrm{ord}_{x_{\widetilde{y}}}(f_{y_i,j,1}) \ge -4-2 M_{\widetilde{y}} = -3(m_{\widetilde{y}}+1)-2(M_{\widetilde{y}}-m_{\widetilde{y}})+m_{\widetilde{y}}-1 \ge -e_{x_{\widetilde{y}}/\widetilde{y}}k_{\widetilde{y}}-d_{x_{\widetilde{y}}/\widetilde{y}},$$
where the last inequality follows since $k_{\widetilde{y}}=0$ and $d_{x_{\widetilde{y}}/\widetilde{y}}=3(m_{\widetilde{y}}+1)+2(M_{\widetilde{y}}-m_{\tilde{y}})$ by \cite[Equation (3.11)]{BleherCamacho2023}.
Therefore, $f ds\in \HH^0(X,\Omega_X)$ for all $f\in\mathcal{B}$. 

Similarly to the proof of \cite[Lemma 3.6]{BleherCamacho2023}, we obtain that the elements in $\mathcal{B}$ are $k$-linearly independent, and that, for all $y\in Y_{\mathrm{br}}$,
$$\mu_{y,3}+\nu_y + \mu_{y,1}+\nu_y + \mu_{y,2} = \textstyle \frac{1}{2}\,d_{x_y/y}.$$
We obtain 
$$\#\mathcal{B}_y = \left\{ \begin{array}{cl}
\frac{1}{2}\,d_{x_y/y} - 3&\mbox{if $y=y_1$},\\
\frac{1}{2}\,d_{x_y/y}&\mbox{if $y\ne y_1$}.
\end{array}\right. $$
Since the Riemann-Hurwitz formula implies
$$g(X) = 1+4(g(Y)-1) + \frac{1}{2} \sum_{y\in Y_{\mathrm{br}}} d_{x_y/y}= -3 + \frac{1}{2} \sum_{y\in Y_{\mathrm{br}}} d_{x_y/y},$$
we obtain $\#\mathcal{B}=g(X)$, which completes the proof of Lemma \ref{lem:Hbasis}.
\end{proof}

\begin{remark}
\label{rem:really??}
We make the same assumptions and use the same notation as in Lemma \ref{lem:Hbasis}.
For each $y\in Y_{\mathrm{br}}$, define
$$\sigma_y-1:=\begin{cases}
\sigma-1, & \mbox{in the situation of Remark \ref{rem:BC2023} (i) or (ii)(a)},\\
\tau-1, & \mbox{in the situation of Remark \ref{rem:BC2023} (ii)(b)},\\
\sigma\tau-1, & \mbox{in the situation of Remark \ref{rem:BC2023} (ii)(c)},
\end{cases}$$
and 
$$\tau_y-1:=\begin{cases}
\tau-1, & \mbox{in the situation of Remark \ref{rem:BC2023} (i) or (ii)(a)},\\
\sigma -1, & \mbox{in the situation of Remark \ref{rem:BC2023} (ii)(b)},\\
\tau - 1, & \mbox{in the situation of Remark \ref{rem:BC2023} (ii)(c)}.
\end{cases}$$
Similarly to the proof of \cite[Theorem 3.7]{BleherCamacho2023}, it follows that the $H$-action on $\mathcal{B}_y$ is given by 
$$\begin{array}{|c||c|c|l}
     \cline{1-3}
     & \sigma_y-1 & \tau_y-1 & \\
     \cline{1-3}\\[-2.75ex]\cline{1-3}
    f_{y,1,i_1} & 0 & 0 & \mbox{ for } a(y)\leq i_1\leq \mu_{y,3}+\nu_y+k_y,\\
    \cline{1-3}
    f_{y,2,i_2} & 0 & f_{y,1,i_2} & \mbox{ for } a(y)\leq i_2\leq \mu_{y,1}+\nu_y+k_y,\\
    \cline{1-3}
    f_{y,3,i_3} & f_{y,1,i_3} & 0 & \mbox{ for } a(y)\leq i_3\leq \mu_{y,1}+k_y,\\
    \cline{1-3}
    f_{y,3,i_3} & f_{y,1,i_3} & \displaystyle\sum_{i=0}^{i_3-\mu_{y,1}-k_y-1}\theta_{y,i}f_{y,1,i_3+\nu_y-i} & \mbox{ for } \mu_{y,1}+k_y+1\leq i_3\leq \mu_{y,2}+k_y.\\
    \cline{1-3}
\end{array}$$
\end{remark}

\medskip

Using Definition \ref{def:3.5} and Lemma \ref{lem:Hbasis} instead of \cite[Definition 3.5 and Lemma 3.6]{BleherCamacho2023} and using Remark \ref{rem:really??}, we can now apply similar arguments as in the proof of \cite[Theorem 3.7]{BleherCamacho2023} to obtain the following result, where we use Notation \ref{not:K4indecomposables} for the indecomposable $kH$-modules.

\begin{theorem} 
\label{thm:BC2023}
Suppose Assumption $\ref{ass:general}$ holds, and use Notations $\ref{not:general}$, $\ref{not:deltalambda}$, $\ref{not:functionfields}$, $\ref{not:specialbranchY}$, $\ref{not:branchA4}$ and Definitions $\ref{def:uniformky}$, $\ref{def:3.5}$. There is an isomorphism of $kH$-modules 
$$\HH^0(X,\Omega_X)\cong\bigoplus_{y\in Y_{\mathrm{br}}}\bigg(N_{2l_y,\lambda_y}^{\oplus a_{y,1}}\oplus N_{2(l_y-1),\lambda_y}^{\oplus a_{y,2}}\bigg)\oplus M_{3,1}^{\oplus b}\oplus k^{\oplus c}$$
where 
$b=-1+\sum_{y\in Y_{\mathrm{br}}} \mu_{y,1}$, $c=\sum_{y\in Y_{\mathrm{br}}} (\mu_{y,3}-\mu_{y,2})$, and $l_y,~a_{y,1},~a_{y,2}$ are given as follows for each $y\in Y_{\mathrm{br}}:$
\begin{itemize}
\item[(i)] If $\delta_y=0$, then $l_y=1,~a_{y,1}=\mu_{y,2}-\mu_{y,1}$, and $a_{y,2}=0$.
 \item[(ii)] If $\delta_y\ge 1$, then $l_y\geq1$ and $1\leq a_{y,1}\leq\delta_y$ are uniquely determined by the equation
 $$\mu_{y,2}-\mu_{y,1}=(l_y-1)\delta_y+a_{y,1},$$
and $a_{y,2}=\delta_y-a_{y,1}$.
\item[(iii)] If $\delta_y=-1$, then $l_y\ge 1$ and $1\leq a_{y,1}\leq\frac{M_y-m_y}{2}$ are uniquely determined by the equation 
$$\mu_{y,2}-\mu_{y,1} +\frac{M_y-m_y}{2}=(l_y-1)\frac{M_y-m_y}{2}+a_{y,1},$$
and $a_{y,2}=\frac{M_y-m_y}{2}-a_{y,1}$.
\end{itemize}
\end{theorem}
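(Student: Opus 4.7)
The plan is to follow closely the strategy of \cite[Theorem 3.7]{BleherCamacho2023}, substituting the new basis $\mathcal{B}$ of Lemma \ref{lem:Hbasis} for the one used there and invoking the $H$-action table from Remark \ref{rem:really??}. The first step would be to observe that every element of $\mathcal{B}_y$ is mapped by $\sigma_y-1$ and $\tau_y-1$ into the $k$-span of $\mathcal{B}_{y,1}\subseteq\mathcal{B}_y$, so each $V_y := \mathrm{span}_k(\mathcal{B}_y)$ is a $kH$-submodule of $\HH^0(X,\Omega_X)$. Since the $\mathcal{B}_y$ are pairwise disjoint and their union is $\mathcal{B}$, this yields a direct-sum decomposition
$$\HH^0(X,\Omega_X)\;=\;\bigoplus_{y\in Y_{\mathrm{br}}} V_y$$
of $kH$-modules, reducing the problem to a local analysis at each branch point.

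Next, for each fixed $y$ I would decompose $V_y$ into indecomposable $kH$-summands by an explicit change of basis within $\mathcal{B}_y$ guided by the action table. That table tells us that the $f_{y,1,i_1}$ are all $H$-invariant, that each pair $(f_{y,1,i_2},f_{y,2,i_2})$ is a length-two indecomposable killed by $\sigma_y-1$, and that the $f_{y,3,i_3}$ generate longer indecomposables whose length and parameter depend on how the truncated Laurent tail of $\theta_y-\lambda_y$ (encoded by $\delta_y$) interacts with $\sigma_y-1$ and $\tau_y-1$. The three cases of the statement correspond precisely to the three possible behaviors: $\delta_y=0$ (where only the leading coefficient $\lambda_y$ contributes, producing $N_{2,\lambda_y}$ summands), $\delta_y\ge 1$ (where the Euclidean-division identity $\mu_{y,2}-\mu_{y,1}=(l_y-1)\delta_y+a_{y,1}$ dictates the proportion of $N_{2l_y,\lambda_y}$ and $N_{2(l_y-1),\lambda_y}$ summands), and $\delta_y=-1$ (where $m_y<M_y$ forces $\lambda_y\in\{0,1,\infty\}$ and $\nu_y=(M_y-m_y)/2$ replaces $\delta_y$ in the bookkeeping). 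The precise matching of coefficients proceeds by iteratively normalizing the $f_{y,3,i_3}$ modulo lower-index basis elements, exactly as in \cite[Theorem 3.7]{BleherCamacho2023}.

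After decomposing each $V_y$ locally, I would globally collect the leftover pieces. The $f_{y,1,i_1}$ with $\mu_{y,2}+k_y<i_1\le\mu_{y,3}+\nu_y+k_y$ (those lying outside the image of $\sigma_y-1$ from $f_{y,3,\cdot}$) assemble into $k^{\oplus c}$ with $c=\sum_y(\mu_{y,3}-\mu_{y,2})$, while the length-two subquotients from $f_{y,3,i_3}$ with $i_3\le\mu_{y,1}+k_y$ combine with adjacent trivial generators to form $M_{3,1}$-string modules, with one net cancellation across the union of branch points accounting for the ``$-1$'' in $b=-1+\sum_y\mu_{y,1}$.

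The main obstacle I anticipate is handling the case $r=0$ of Definition \ref{def:3.5}, where the basis acquires the additional elements $f_{y,1,1}, f_{y,2,1}, f_{y,3,1}$ for each $y\in Y_{\mathrm{br}}-\{y_1\}$, while $\mathcal{B}_{y_1}$ loses three elements at the bottom. One must verify that the action table of Remark \ref{rem:really??} extends correctly to these new elements (since they are not of the form $\pi_y^{-i}$ but rather products $\pi_y^{-1}\pi_{\widetilde{y}}^{-1}$), that the $k$-linear relations among the $f_{y,1,1}$'s governed by Remark \ref{rem:cuteargument} do not destroy the direct-sum structure, and that the aggregate counts still give the stated formula. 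Once the local-to-global match and this $r=0$ adjustment are handled, the remaining verifications are routine.
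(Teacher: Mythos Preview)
Your proposal is correct and follows essentially the same approach as the paper, which simply states that the result follows by applying the arguments of \cite[Theorem 3.7]{BleherCamacho2023} with the basis of Definition~\ref{def:3.5} and Lemma~\ref{lem:Hbasis} and the action table of Remark~\ref{rem:really??} in place of the corresponding inputs from \cite{BleherCamacho2023}. Your anticipated $r=0$ obstacle is handled exactly as you suggest: since $f_{y,2,1}=f_{y,1,1}\,u_y$ and $f_{y,3,1}=f_{y,1,1}\,v_y$, the action table extends mechanically to these elements, and the index bookkeeping (with the ``$-1$'' in $b$ coming from $a(y_1)=2$ when $r=0$, or from $k_\infty=-2,\,a(\infty)=0$ when $r\ge 1$) goes through unchanged; one small slip is that the trivial summands $k$ correspond to $f_{y,1,i_1}$ with $\mu_{y,2}+\nu_y+k_y<i_1\le\mu_{y,3}+\nu_y+k_y$, not $\mu_{y,2}+k_y<i_1$.
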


Our goal is now to determine the precise $kG$-module structure of $\HH^0(X,\Omega_X)$. We use Notation \ref{not:A4indecomposables} for the indecomposable $kG$-modules. Applying Theorem \ref{thm:BC2023} together with the restrictions of all indecomposable $kG$-modules from $G$ to $H$, as provided by Lemmas \ref{lem:bandconnection} and \ref{lem:restrictA4toK4}, we see which indecomposable $kG$-modules can occur as direct summands of $\HH^0(X,\Omega_X)$. However, determining the precise multiplicities of these indecomposable $kG$-module summands is more subtle. To accomplish this, we use the following notation to organize the branch points $Y_{\mathrm{br}}$.

\begin{nota} 
\label{not:organizebranch}
Suppose Assumption \ref{ass:general} holds, and use Notations \ref{not:general}, \ref{not:functionfields} and \ref{not:branchA4}.
We order 
$$Y_{\mathrm{br}}=Y_{\infty,0}\cup\{y_j,y_j',y_j''\,:\, 1\le j\le \ell\}$$
as in Notation \ref{not:branchA4}, i.e. each $y\in Y_{\infty,0}$ corresponds to $s^{\epsilon_y}$ for $\epsilon_y$ as in Notation \ref{not:specialbranchY}, and, for $1\le j\le \ell$, there exists $\psi_j\in k^\times$  such that $y_j$ corresponds to $s-\psi_j$, $y_j'$ corresponds to $s-\zeta\psi_j$ and $y_j''$ corresponds to $s-\zeta^2\psi_j$. For $1\le j\le \ell$, we define
$$\lambda_j:=\lambda_{y_j} \quad\mbox{and}\quad
\phi_j :=  \frac{\zeta+\lambda_j}{\zeta^2+\lambda_j}.$$
Moreover, we order $\{y_1,\ldots,y_\ell\}$ such that there are  integers
$0\le \ell_1\le \ell_2\le \ell_3\le\ell$ satisfying
$$\lambda_1,\ldots,\lambda_{\ell_1} = \zeta,\;
\lambda_{\ell_1+1},\ldots,\lambda_{\ell_2} = \zeta^2,\;
\lambda_{\ell_2+1},\ldots,\lambda_{\ell_3} \in k^\times-\{1,\zeta,\zeta^2\},\mbox{ and }
\lambda_{\ell_3+1},\ldots,\lambda_{\ell} \in \{0,1,\infty\}.$$
Additionally, for $y\in Y_{\infty,0}$, we let 
$$\ast_y = \begin{cases}
0 & \mbox{if } \lambda_y=\zeta, \\
\infty &  \mbox{if } \lambda_y=\zeta^2.
\end{cases}$$
\end{nota}

We can now state our main result, which gives the precise $kG$-module structure of $\HH^0(X,\Omega_X)$, where we use Notation \ref{not:A4indecomposables} for the indecomposable $kG$-modules.

\begin{theorem}
\label{thm:main}
Suppose Assumption $\ref{ass:general}$ holds, and use Notations $\ref{not:general}$, $\ref{not:functionfields}$, $\ref{not:specialbranchY}$, $\ref{not:branchA4}$, $\ref{not:organizebranch}$ and Definitions $\ref{def:uniformky}$, $\ref{def:3.5}$,  in addition to the notation used in the statement of Theorem $\ref{thm:BC2023}$.
Then there is a $kG$-module isomorphism between $\HH^0(X,\Omega_X)$ and the direct sum 
$$\bigoplus_{y\in Y_{\infty,0}} \;\bigoplus_{i=0}^2 \left(N_{2l_y,\ast_y,i}^{\oplus a_{y,1,i}} \oplus N_{2(l_y-1),\ast_y,i}^{\oplus a_{y,2,i}}\right)\oplus$$
$$\bigoplus_{j=1}^{\ell_1}\bigoplus_{i=0}^2 \left(N_{2l_{y_j},0,i}^{\oplus a_{y_j,1}}\oplus N_{2(l_{y_j-1)},0,i}^{\oplus a_{y_j,2}}\right)\oplus
\bigoplus_{j=\ell_1+1}^{\ell_2}\bigoplus_{i=0}^2 \left(N_{2l_{y_j},\infty,i}^{\oplus a_{y_j,1}}\oplus N_{2(l_{y_j-1)},\infty,i}^{\oplus a_{y_j,2}}\right)\oplus$$
$$\bigoplus_{j=\ell_2+1}^{\ell_3} \left(B_{6l_{y_j},\phi_j^3}^{\oplus a_{y_j,1}}\oplus
B_{6(l_{y_j-1)},\phi_j^3}^{\oplus a_{y_j,2}}\right)\oplus
\bigoplus_{j=\ell_3+1}^{\ell} \left(B_{6l_{y_j},1}^{\oplus a_{y_j,1}}\oplus
B_{6(l_{y_j-1)},1}^{\oplus a_{y_j,2}}\right)\oplus$$
$$\bigoplus_{i=0}^2 M_{3,1,i}^{\oplus b_i} \oplus \bigoplus_{i=0}^2 S_i^{\oplus c_i}$$
where $\ell_1$, $\ell_2$, $\ell_3$, and $\phi_j$ $($for $1\le j \le \ell$$)$ are as in Notation $\ref{not:organizebranch}$, and $l_y$ $($for $y\in Y_{\infty,0}$$)$, and $l_{y_j}$, $a_{y_j,1}$, $a_{y_j,2}$ $($for $1\le j\le \ell$$)$ are as in the statement of Theorem $\ref{thm:BC2023}$, and the remaining parameters are as follows, for $i\in \{0,1,2\}$:
\begin{itemize}
\item[(a)] 
For $y\in Y_{\infty,0}$, write
$\mu_{y,1}+k_y-a(y)+1 = 3q_{y,1}+r_{y,1}$ for appropriate integers $q_{y,1}\ge 0$ and $r_{y,1}\in\{0,1,2\}$, and define
$$b_{y,i}=\begin{cases}
q_{y,1}+1 & \mbox{if } r_{y,1}\ne 0 \mbox{ and } i\equiv 1-\epsilon_ya(y)\mod 3,\mbox{ or}\\
& \mbox{if } r_{y,1}= 2 \mbox{ and } i\equiv 1-\epsilon_y(a(y)+1)\mod 3,\\
q_{y,1} & \mbox{otherwise}.
\end{cases}$$
Then
$$b_i=\sum_{y\in Y_{\infty,0}} b_{y,i} -\delta(r,i)+  \sum_{j=1}^\ell \mu_{y_j,1}$$
where $\delta(r,i)$ equals $1$ if $(r,i)=(0,0)$, and $0$ otherwise.
\item[(b)] 
For $y\in Y_{\infty,0}$, write 
$\mu_{y,3}-\mu_{y,2} = 3q_{y,2}+r_{y,2}$ for appropriate integers $q_{y,2}\ge 0$ and $r_{y,2}\in\{0,1,2\}$, and define
$$c_{y,i}=\begin{cases}
q_{y,2}+1 & \mbox{if } r_{y,2}\ne 0 \mbox{ and }i\equiv 1-\epsilon_y(\mu_{y,2}+k_y+1)\mbox{ mod }3,\mbox{ or}\\
&\mbox{if } r_{y,2}= 2  \mbox{ and } i\equiv 1-\epsilon_y(\mu_{y,2}+k_y+2)\mod 3, \\
q_{y,2} & \mbox{otherwise}.
\end{cases}$$
Then
$$c_i=\sum_{y\in Y_{\infty,0}} c_{y,i} + \sum_{j=1}^\ell \left(\mu_{y_j,3}-\mu_{y_j,2} \right).$$
\item[(c)] 
For $y\in Y_{\infty,0}$, let $a_{y,1}$ and $a_{y,2}$ be as defined in the statement of Theorem $\ref{thm:BC2023}$. For $x\in\{1,2\}$,  write 
$a_{y,x} = 3q_{y,3,x}+r_{y,3,x}$  for appropriate integers $q_{y,3,x}\ge 0$ and $r_{y,3,x}\in\{0,1,2\}$.
Then
$$a_{y,1,i} = \begin{cases}
q_{y,3,1}+1 & \mbox{if } r_{y,3,1}\ne 0 \mbox{ and } i\equiv 1-\epsilon_y(\mu_{y,1}+k_y+1)\mod 3,\mbox{ or}\\
& \mbox{if } r_{y,3,1}= 2 \mbox{ and }i\equiv 1-\epsilon_y(\mu_{y,1}+k_y+2)\mod 3, \\
q_{y,3,1} & \mbox{otherwise},
\end{cases}$$
and
$$a_{y,2,i} = \begin{cases}
q_{y,3,2}+1 & \mbox{if } r_{y,3,2}\ne 0 \mbox{ and } i\equiv 1-\epsilon_y(\mu_{y,1}+k_y+a_{y,1}+1)\mod 3, \mbox{ or}\\
&\mbox{if } r_{y,3,2}= 2 \mbox{ and } i\equiv 1-\epsilon_y(\mu_{y,1}+k_y+a_{y,1}+2)\mod 3, \\
q_{y,3,2} & \mbox{otherwise}.
\end{cases}$$
\end{itemize}
Moreover, the parameters in $(\mathrm{a})$, $(\mathrm{b})$ and $(\mathrm{c})$ are determined by the action of $\rho$ on the uniformizers $\pi_y$ of the branch points $y\in Y_{\mathrm{br}}$.
\end{theorem}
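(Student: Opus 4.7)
The plan is to build on Theorem~\ref{thm:BC2023} and the $k$-basis $\mathcal{B}$ from Definition~\ref{def:3.5} by computing the action of $\rho$ on $\mathcal{B}$, and then identifying each indecomposable $kG$-module summand of $\HH^0(X,\Omega_X)$ by matching its $H$-restriction against the catalogue in Lemmas~\ref{lem:bandconnection} and~\ref{lem:restrictA4toK4}. The $\rho$-orbit decomposition of $Y_{\mathrm{br}}$ organises the work: the points of $Y_{\infty,0}$ are fixed by $\rho$, whereas the orbits $\{y_j,y_j',y_j''\}$ of length three (with $\lambda$-values related by the cross-ratio transformations of Lemma~\ref{lem:valuesbranchpoints}(b)(i)) are permuted freely.

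For a fixed point $y\in Y_{\infty,0}$, I would first compute the $\rho$-action on each $f_{y,\cdot,\cdot}\,ds$ in the block $\mathcal{B}_y$. Since $\rho(s)=\zeta s$ and $\rho(ds)=\zeta\,ds$, and since $\rho u = v$, $\rho v \in \{u+v,u+v+1\}$, each of the three index chains $\{f_{y,1,i_1}\}$, $\{f_{y,2,i_2}\}$, $\{f_{y,3,i_3}\}$ can be put, after a triangular correction by lower-order terms, into a union of $\rho$-eigenspaces whose eigenvalue on the $i$th vector depends only on the residue of $i$ (plus a fixed shift determined by $\epsilon_y$ and by the offsets $a(y)$, $\mu_{y,1}+k_y$, $\mu_{y,1}+k_y+a_{y,1}$, $\mu_{y,2}+k_y+1$) modulo~$3$. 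Matching these eigenvalues against the action of $\rho$ on the top and socle layers of each $kH$-summand in Theorem~\ref{thm:BC2023}, and then invoking Lemma~\ref{lem:restrictA4toK4} to recognise each $N_{2l,\ast_y,i}$, $M_{3,1,i}$ or $S_i$, produces the multiplicities in parts (a), (b), (c). The quotient-remainder splits $\mu_{y,1}+k_y-a(y)+1=3q_{y,1}+r_{y,1}$ (and its analogues) simply count how many indices of each chain fall in each residue class, while the correction term $-\delta(r,i)$ in (a) records that the constant differential $ds$ is not holomorphic when $r=0$, so one trivial summand is missing in one specific $\rho$-eigenspace.

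For a free orbit $\{y_j,y_j',y_j''\}$, Lemma~\ref{lem:valuesbranchpoints}(b)(i) already organises the $kH$-data at the three points symmetrically, and $\rho$ cyclically permutes the blocks $\mathcal{B}_{y_j}\to\mathcal{B}_{y_j''}\to\mathcal{B}_{y_j'}\to\mathcal{B}_{y_j}$ up to explicit scalars coming from $\rho(\pi_{y_j})$ and from the transformation rules for the Artin--Schreier pair $(u_{y_j},v_{y_j})$. Consequently the triple $N_{2l_{y_j},\lambda_{y_j}}\oplus N_{2l_{y_j},\lambda_{y_j'}}\oplus N_{2l_{y_j},\lambda_{y_j''}}$, together with $\rho$, is a $kG$-module induced from its $y_j$-component, and by Lemma~\ref{lem:bandconnection} it must be a band module $B_{6l_{y_j},c}$. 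Multiplying out the three explicit $\zeta$-twisted $2\times 2$ transition matrices associated to $\lambda_{y_j},\lambda_{y_j'},\lambda_{y_j''}$ identifies the band parameter $c$ as $\phi_j^3$ in the generic case $\lambda_j\notin\{0,1,\zeta,\zeta^2,\infty\}$ (the cube reflecting $\rho^3=1$) and as $1$ in the degenerate case $\lambda_j\in\{0,1,\infty\}$. The extra $\mu_{y_j,1}$ copies of $M_{3,1}$ and $\mu_{y_j,3}-\mu_{y_j,2}$ copies of the trivial module at each of $y_j,y_j',y_j''$ appear in triples freely permuted by $\rho$, contributing one copy of each $M_{3,1,i}$ and each $S_i$ to $b_i$ and $c_i$ uniformly in $i$; this explains the symmetric summands $\sum_j\mu_{y_j,1}$ and $\sum_j(\mu_{y_j,3}-\mu_{y_j,2})$.

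The main obstacle will be the bookkeeping in the fixed-point case: one must simultaneously track the precise triangular corrections needed to diagonalise $\rho$ on each chain, the index shifts coming from $\epsilon_y$ and from the various offsets in Definition~\ref{def:3.5}, and the labelling convention for the indecomposable $kG$-modules $N_{2l,\ast_y,i}$ of Notation~\ref{not:A4indecomposables}, so that the residue-class counts in (a)--(c) produce the formulas exactly as stated. The identification $c=\phi_j^3$ in the free-orbit case is the main substantive algebraic computation but is a finite matrix product, not itself a conceptual obstacle once the fixed-point analysis is in place.
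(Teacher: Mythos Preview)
Your plan matches the paper's proof: split $\HH^0(X,\Omega_X)$ along $\rho$-orbits in $Y_{\mathrm{br}}$, read off the label $i$ at a fixed point $y\in Y_{\infty,0}$ from the $\rho$-eigenvalue $\zeta^{1-\epsilon_y a}$ on the socle element $f_{y,1,a}\,ds$, and recognise the free-orbit contribution as $\mathrm{Ind}_H^G U^{\ddagger}_{y_j}$.

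Two execution points diverge from your sketch. The band parameter is not found by multiplying three $\lambda$-transition matrices; the paper instead passes once from $(A,B)$ to $(C,D)$ via Lemma~\ref{lem:bandconnection}, getting $N_{2n,\lambda_j}^{(A,B)}\cong N_{2n,\phi_j}^{(C,D)}$, and then quotes the precomputed induction $\mathrm{Ind}_H^G N_{2n,\phi}^{(C,D)}\cong B_{6n,\phi^3}$ from Lemma~\ref{lem:restrictA4toK4}(b)(4), where the cube already comes from $J_n(\phi)^3\sim J_n(\phi^3)$. This same mechanism, with part~(b)(3) of that lemma, also covers the cases $\lambda_j\in\{\zeta,\zeta^2\}$ you did not mention, yielding $\bigoplus_i N_{2l_{y_j},\ast,i}$ rather than a band. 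Second, the term $-\delta(r,i)$ is not about $ds$ failing to be holomorphic: when $r=0$ the paper singles out a six-dimensional piece $U^{\dagger}$ built from the special basis elements $f_{y_1',1,1},f_{y_1'',1,1}$ of Definition~\ref{def:3.5}, computes $\rho$ on its two-dimensional socle explicitly, and finds $U^{\dagger}\cong M_{3,1,1}\oplus M_{3,1,2}$, so precisely $M_{3,1,0}$ is missing. Finally, for part~(c) the paper does not diagonalise $\rho$ on the full chain; rather it observes that the first $a_{y,1}$ standard socle vectors $b_1,\dots,b_{a_{y,1}}$ are exactly the eigenvectors of the size-$l_y$ Jordan blocks of $\Theta(I+\Theta)^{-1}$ (and the next $a_{y,2}$ belong to the size-$(l_y-1)$ blocks), so the $\rho$-eigenvalue of $f_{y,1,\mu_{y,1}+k_y+j}\,ds$ directly labels the $j$th summand --- this structural fact about the nilpotent matrix is the real content behind your ``triangular correction''.
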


\begin{proof}
It follows from the proof of Theorem \ref{thm:BC2023}, which is proved using similar arguments as in the proof of \cite[Theorem 3.7]{BleherCamacho2023}, that 
\begin{equation}
\label{eq:H0decomposition}
\mathrm{Res}^G_H\HH^0(X,\Omega_X)=\bigoplus_{y\in Y_{\mathrm{br}}} U_y,
\end{equation}
where $U_y$ is the $kH$-module that is the $k$-span of
$$\mathcal{B}_y\,ds:=\{f\,ds\,:\, f\in\mathcal{B}_y\}$$ 
and $\mathcal{B}_y$ is as in Definition \ref{def:3.5}(c). 

Lemmas \ref{lem:bandconnection} and \ref{lem:restrictA4toK4} show the following. Suppose $M$ is a $kH$-submodule and a direct summand of $\HH^0(X,\Omega_X)$, suppose $M$ is isomorphic to a direct sum of copies of $M_{3,1}=M_{3,1}^{(A,B)}$ (resp. a direct sum of copies of the simple $kH$-module $k$), and suppose the action of $\rho$ on $M$ preserves the socle of $M$ as a subspace. Then $M$ is a $kG$-submodule of $\HH^0(X,\Omega_X)$ that is a direct summand. Moreover, for each $i\in\{0,1,2\}$, the multiplicity with which $M_{3,1,i}$ (resp. $S_i$) occurs as a direct summand of $M$ equals the $k$-dimension of the eigenspace with eigenvalue $\zeta^i$ of the action of $\rho$ on the socle of $M$.

Suppose first that $y\in Y_{\infty,0}$, i.e. suppose $Y_{\infty,0}\ne \emptyset$. We consider $U_y$, which will give us the parameters $a_{y,1,i},~a_{y,2,i}$ and $b_{y,i},~c_{y,i}$, for $i=0,1,2$. By  Notation \ref{not:specialbranchY} and Definitions \ref{def:uniformky} and \ref{def:3.5}, we have
$$\epsilon_\infty=-1,~k_\infty=-2,~a(\infty)=0\quad\mbox{and}\quad \epsilon_0=1,~k_0=0,~a(0)=1.$$
Similar to the proof of \cite[Theorem 3.7]{BleherCamacho2023}, we define the following, pairwise disjoint subsets of $\mathcal{B}_y\,ds$:
$$\begin{array}{rcll}
\mathcal{B}_{y,3,1,s_1}\,ds &:=& \{f_{y,1,s_1}\, ds, f_{y,2,s_1}\,ds,f_{y,3,s_1}\,ds\} & \mbox{for } a(y)\le s_1\le \mu_{y,1}+k_y,\\
\mathcal{B}_{y,1,s_2} \,ds&:=& \{f_{y,1,s_2}\,ds\} & \mbox{for } \mu_{y,2} + k_y + 1 \le s_2 \le \mu_{y,3}  +k_y,
\end{array}$$
where we use that $\nu_y=\frac{M_y-m_y}{2}=0$ by Lemma \ref{lem:valuesbranchpoints}(a). As in the proof of \cite[Theorem 3.7]{BleherCamacho2023}, it follows that the $k$-span of each $\mathcal{B}_{y,3,1,s_1}\,ds$ gives a $kH$-module that is isomorphic to $M_{3,1}=M_{3,1}^{(A,B)}$, and the $k$-span of each $\mathcal{B}_{y,1,s_2}\,ds$ gives the simple $kH$-module $k$. We have
\begin{equation}
\label{eq:rhoactioninty0}
\rho (\pi_y^{-a}\, ds) = \rho (s^{-\epsilon_ya}\,ds) = (\zeta s)^{-\epsilon_ya} d(\zeta s) = \zeta^{1-\epsilon_y a} \,\pi_y^{-a}\, ds
\end{equation}
for all non-negative integers $a$.
By Definition \ref{def:3.5} and Remark \ref{rem:really??}, it follows that $\rho$ preserves the socle of each of the $kH$-modules spanned by $\mathcal{B}_{y,3,1,s_1}\,ds$ (resp. by $\mathcal{B}_{y,1,s_2}\,ds$). Therefore, as discussed above, by Lemmas \ref{lem:bandconnection} and \ref{lem:restrictA4toK4}, it follows that each of these $kH$-modules must in fact be a $kG$-module, where the $k$-span of each $\mathcal{B}_{y,3,1,s_1}\,ds$ gives a $kG$-module that is isomorphic to $M_{3,1,i}$, and the $k$-span of each $\mathcal{B}_{y,1,s_2}\,ds$ gives a simple $kG$-module $S_i$, for various $i\in\{0,1,2\}$. To figure out which $i$ occur with which multiplicity, we use (\ref{eq:rhoactioninty0}) to obtain the following simple $kG$-modules for the socles of the $k$-spans of $\mathcal{B}_{y,3,1,s_1}\,ds$, for $a(y)\le s_1\le \mu_{y,1}+k_y$:
$$S_{1-\epsilon_ya(y)\text{ mod }3},S_{1-\epsilon_y(a(y)+1)\text{ mod 3}}, S_{1-\epsilon_y(a(y)+2)\text{ mod 3}}, \ldots, S_{1-\epsilon_y(\mu_{y,1}+k_y)\text{ mod }3}$$
with $\mu_{y,1}+k_y-a(y)+1$ simple modules.
Writing $\mu_{y,1}+k_y-a(y)+1 = 3q_{y,1}+r_{y,1}$ for appropriate integers $q_{y,1}\ge 0$ and $r_{y,1}\in\{0,1,2\}$, we obtain
$$b_{y,i}=\begin{cases}
q_{y,1}+1 & \mbox{if } r_{y,1}\ne 0 \mbox{ and } i\equiv 1-\epsilon_ya(y)\mod 3,\mbox{ or}\\
& \mbox{if } r_{y,1}= 2 \mbox{ and } i\equiv 1-\epsilon_y(a(y)+1)\mod 3,\\
q_{y,1} & \mbox{otherwise}.
\end{cases}$$
On the other hand, the simple $kG$-modules corresponding to the $k$-spans of $\mathcal{B}_{y,1,s_2}\,ds$, for $\mu_{y,2}  +k_y+1 \le s_2 \le \mu_{y,3} +k_y$ are:
$$S_{1-\epsilon_y(\mu_{y,2}+k_y+1)\text{ mod }3}, S_{1-\epsilon_y(\mu_{y,2}+k_y+2)\text{ mod }3},\ldots, S_{1-\epsilon_y(\mu_{y,3}+k_y)\text{ mod }3}$$
with $\mu_{y,3}-\mu_{y,2}$ simple modules. Writing $\mu_{y,3}-\mu_{y,2} = 3q_{y,2}+r_{y,2}$ for appropriate integers $q_{y,2}\ge 0$ and $r_{y,2}\in\{0,1,2\}$, we obtain
$$c_{y,i}=\begin{cases}
q_{y,2}+1 & \mbox{if } r_{y,2}\ne 0 \mbox{ and }i\equiv 1-\epsilon_y(\mu_{y,2}+k_y+1)\mbox{ mod }3,\mbox{ or}\\
&\mbox{if } r_{y,2}= 2  \mbox{ and } i\equiv 1-\epsilon_y(\mu_{y,2}+k_y+2)\mod 3, \\
q_{y,2} & \mbox{otherwise}.
\end{cases}$$

Next, we consider the remaining elements of $\mathcal{B}_y\, ds$:
\begin{eqnarray*}
\mathcal{B}_{\lambda_y} \,ds&:=& \{f_{y,1,s_3}\,ds \; : \; \mu_{y,1}+k_y+1 \le s_3 \le \mu_{y,2} + k_y \}\\
&  \cup & \{f_{y,3,s_3}\,ds \; : \; \mu_{y,1}+k_y+1 \le s_3 \le \mu_{y,2} + k_y\},
\end{eqnarray*}
where we use again that $\nu_y=0$. 
   
Define $n_y=\mu_{y,2}-\mu_{y,1}$. If $n_y=0$ then $a_{y,1}=a_{y,2}=0$, and hence $a_{y,1,i}=a_{y,2,i}=0$ for all $i\in\{0,1,2\}$. 
   
Suppose now that $n_y\ge 1$.
Since $\lambda_y\in\{\zeta,\zeta^2\}$ by Lemma \ref{lem:valuesbranchpoints}, it follows by applying similar arguments as in the proof of \cite[Theorem 3.7]{BleherCamacho2023} and by Lemmas \ref{lem:bandconnection} and \ref{lem:restrictA4toK4} that the $k$-span of $\mathcal{B}_{\lambda_y}\,ds$ is a $kG$-module, which we call $U_{\lambda_y}$. We obtain the following simple $kG$-modules for the socle of $U_{\lambda_y}$: 
\begin{equation}
\label{eq:soclesinfinitystrings}
S_{1-\epsilon_y(\mu_{y,1}+k_y+1)\text{ mod }3}, S_{1-\epsilon_y(\mu_{y,1}+k_y+2)\text{ mod }3},\ldots, S_{1-\epsilon_y(\mu_{y,2} + k_y)\text{ mod }3}
\end{equation}
with $n_y$ simple modules. Moreover, by applying similar arguments as in the proof of \cite[Theorem 3.7]{BleherCamacho2023} and by Lemmas \ref{lem:valuesbranchpoints} and  \ref{lem:bandconnection}, $\mathrm{Res}^G_H\,U_{\lambda_y}$ is a $kH$-module that is isomorphic to 
$$\left(N_{2l_y,\lambda_y}^{(A,B)}\right)^{\oplus a_{y,1}}\;\oplus\; \left(N_{2(l_y-1),\lambda_y}^{(A,B)}\right)^{\oplus a_{y,2}} \cong
\left(N_{2l_y,\ast_y}^{(C,D)}\right)^{\oplus a_{y,1}}\;\oplus\; \left(N_{2(l_y-1),\ast_y}^{(C,D)}\right)^{\oplus a_{y,2}}$$
where the modules are as Notations \ref{not:K4indecomposables} and \ref{not:kCDmodules}, $\ast_y$ is as in Notation \ref{not:organizebranch}, and $l_y,~a_{y,1},~a_{y,2}$ are as defined in Theorem \ref{thm:BC2023}. 
Letting $\delta_y$ be as in Notation \ref{not:deltalambda}, we define the following $n_y\times n_y$ matrix $\Theta$:

If $\delta_y\in\{0,n_y\}$ then $\Theta$ is defined to be the zero matrix, and if $\delta_y\not\in\{0,n_y\}$ then
$$\Theta:=
\left(\begin{array}{ccccccc}
0&\cdots&0&\theta_{y,\delta_y}&\theta_{y,\delta_y+1}&\cdots&\theta_{y,n_y-1}\\
\vdots&&&0&\theta_{y,\delta_y}&\ddots&\vdots\\
\vdots&&&&\ddots&\ddots&\theta_{y,\delta_y+1}\\
\vdots&&&&&0&\theta_{y,\delta_y}\\
\vdots&&&&&&0\\
\vdots&&&&&&\vdots\\
0&\cdots&\cdots&\cdots&\cdots&\cdots&0
\end{array}\right).$$ 
In particular, if $\delta_y\not\in\{0,n_y\}$ then the first $\delta_y$ columns of $\Theta$ are zero and $\theta_{y,\delta_y}\ne 0$. 
Applying similar arguments as in the proof of \cite[Theorem 3.7]{BleherCamacho2023} and using Remark \ref{rem:AB2}, we obtain that $C$ and $D$ from Definition \ref{def:CD} act as the following matrices on $U_{\lambda_y}$ with respect to the $k$-basis given by $\mathcal{B}_{\lambda_y} \,ds$:
$$C\longleftrightarrow
	\begin{pmatrix}
	0_{n_y} & \zeta^2\, \Theta_1\\
	0_{n_y} & 0_{n_y}
        \end{pmatrix},\quad 
D\longleftrightarrow
	\begin{pmatrix}
	0_{n_y} & \zeta^2\, \Theta_2\\
	0_{n_y} & 0_{n_y}
        \end{pmatrix}$$
where 
$$(\Theta_1,\Theta_2) =\begin{cases}
(I_{n_y}+\Theta,\Theta) & \mbox{if } \lambda_y = \zeta,\\
(\Theta, I_{n_y}+\Theta) & \mbox{if } \lambda_y = \zeta^2
\end{cases}$$
and $0_{n_y}$ (resp. $I_{n_y}$) is the $n_y\times n_y$ zero matrix (resp. identity matrix).

If $\delta_y\in\{0,n_y\}$, then $l_y=1$, $a_{y,1}=\mu_{y,2}-\mu_{y,1}$ and $a_{y,2}=0$, which means $a_{y,2,i}=0$ for $0\le i \le 2$. In this case, writing $a_{y,1}=\mu_{y,2}-\mu_{y,1} = 3q_{y,3,1}+r_{y,3,1}$ for appropriate integers $q_{y,3,1}\ge 0$ and $r_{y,3,1}\in\{0,1,2\}$, we obtain
$$a_{y,1,i} = \begin{cases}
q_{y,3,1}+1 & \mbox{if } r_{y,3,1}\ne 0 \mbox{ and } i\equiv 1-\epsilon_y(\mu_{y,1}+k_y+1)\mod 3, \mbox{ or}\\
&\mbox{if } r_{y,3,1}= 2 \mbox{ and }i\equiv 1-\epsilon_y(\mu_{y,1}+k_y+2)\mod 3,  \\
   q_{y,3,1} & \mbox{otherwise}.
\end{cases}$$

Next suppose $0<\delta_y<n_y$. We perform two changes of bases using the matrices $E_1$ and $E_2$, where
$$E_1=\begin{pmatrix}
I_{n_y} & 0_{n_y}\\
0_{n_y} &  \zeta(I_{n_y}+\Theta)^{-1}
\end{pmatrix}
\quad\mbox{and}\quad
E_2 =\begin{pmatrix}
T_{n_y} & 0_{n_y}\\
0_{n_y} & T_{n_y}
\end{pmatrix},$$
and $T_{n_y}$ is an $n_y\times n_y$ matrix such that
$$T_{n_y}^{-1} \left( \Theta(I_{n_y}+\Theta)^{-1}\right) T_{n_y}$$
is a block diagonal matrix with $a_{y,1}$ blocks of the form $J_{l_y}(0)$ and $a_{y,2}=\delta_y - a_{y,1}$ blocks of the form $J_{l_y-1}(0)$. More precisely, suppose $\{b_1,\ldots, b_{n_y}\}$ is the standard basis of $k^{n_y}$ on which $\Theta(I_{n_y}+\Theta)^{-1}$ acts. Considering $\left(\Theta(I_{n_y}+\Theta)^{-1}\right)^{l_y-1}$ we see that the first $a_{y,1}$ vectors $b_1,\ldots, b_{a_{y,1}}$ are eigenvectors of $a_{y,1}$ Jordan blocks of $\Theta(I_{n_y}+\Theta)^{-1}$ of size $l_y$. On the other hand, the next $a_{y,2}=\delta_y - a_{y,1}$ vectors $b_{a_{y,1}+1}, \ldots,b_{\delta_y}$  are eigenvectors of $a_{y,2}$ Jordan blocks of $\Theta(I_{n_y}+\Theta)^{-1}$ of size $l_y-1$. Define 
$$t_{y,j}:=\begin{cases} l_y & \text{ if } 1\le j \le a_{y,1},\\
l_y-1 &  \text{ if } a_{y,1}+1\le j \le \delta_y.\end{cases}$$
We now find vectors $v_1,\ldots,v_{\delta_y}$ in $k^{n_y}$ such that
\begin{equation}
\label{eq:subtle!}
\left(\Theta(I_{n_y}+\Theta)^{-1}\right)^{t_{y,j}-1} v_j = b_j\quad\text{ for } 1\le j\le \delta_y.
\end{equation}
We therefore obtain a new basis of $k^{n_y}$ given by
$$\{c_1,\ldots, c_{n_y}\} 
= \bigcup_{j=1}^{\delta_y}\{\left(\Theta(I_{n_y} \Theta)^{-1}\right)^{t_{y,j}-1}v_j,\ldots, \Theta(I_{n_y}+\Theta)^{-1} v_j,v_j\}.$$
The matrix $T_{n_y}$ is then defined to be the change of basis matrix expressing the vectors $c_1,\ldots, c_{n_y}$ in terms of the standard basis $\{b_1,\ldots, b_{n_y}\}$ of $k^{n_y}$. By (\ref{eq:subtle!}), there exist $1=i_1<i_2<\ldots< i_{\delta_y}$ in $\{1,\ldots, n_y\}$ such that $c_{i_j}=b_j$ for $1\le j\le \delta_y$. Note that $i_{j+1}-i_j=t_{y,j}$ for $1\le j\le \delta_y-1$. Considering the transition matrix $E_1\cdot E_2$, we see that, for $1\le j\le \delta_y$, the basis element at position $i_j$ in the new basis for $U_{\lambda_y}$ equals the element at position $j$ in the basis $\mathcal{B}_{\lambda_y} \,ds$. 

It follows that, for $1\le j\le a_{y,1}$, the element at position $j$ in the basis $\mathcal{B}_{\lambda_y} \,ds$ corresponds, up to a non-zero scalar multiple, to the second basis vector of the $k$-basis for $N_{2l_y,\ast,i}$, for some $i\in\{0,1,2\}$, according to Remark \ref{rem:stringbandmodules}(ii) applied to the appropriate string in Notation \ref{not:A4indecomposables}(3). Similarly, for $a_{y,1}+1\le j\le \delta_y$, the element at position $j$ in the basis $\mathcal{B}_{\lambda_y} \,ds$ corresponds, up to a non-zero scalar multiple, to the second basis vector of the $k$-basis for $N_{2(l_y-1),\ast,i}$, for some $i\in\{0,1,2\}$.
Moreover, using the strings in Notation \ref{not:A4indecomposables}(3), we see that the action of $\rho$ on the second basis vector for $N_{2n,\ast,i}$ determines $i$ for all positive integers $n$. Therefore, to count the number of $i\in\{0,1,2\}$ that occur, we write, for $x=1,2$, $a_{y,x} = 3q_{y,3,x}+r_{y,3,x}$  for appropriate integers $q_{y,3,x}\ge 0$ and $r_{y,3,x}\in\{0,1,2\}$. Using (\ref{eq:soclesinfinitystrings}),
we conclude that
$$a_{y,1,i} = \begin{cases}
q_{y,3,1}+1 & \mbox{if } r_{y,3,1}\ne 0 \mbox{ and } i\equiv 1-\epsilon_y(\mu_{y,1}+k_y+1)\mod 3,\mbox{ or}\\
& \mbox{if } r_{y,3,1}= 2 \mbox{ and }i\equiv 1-\epsilon_y(\mu_{y,1}+k_y+2)\mod 3, \\
q_{y,3,1} & \mbox{otherwise},
\end{cases}$$
and
$$a_{y,2,i} = \begin{cases}
q_{y,3,2}+1 & \mbox{if } r_{y,3,2}\ne 0 \mbox{ and } i\equiv 1-\epsilon_y(\mu_{y,1}+k_y+a_{y,1}+1)\mod 3, \mbox{ or}\\
&\mbox{if } r_{y,3,2}= 2 \mbox{ and } i\equiv 1-\epsilon_y(\mu_{y,1}+k_y+a_{y,1}+2)\mod 3, \\
q_{y,3,2} & \mbox{otherwise}.
\end{cases}$$

We have now established that if $y\in Y_{\infty,0}$, then the $kH$-module $U_y$ from (\ref{eq:H0decomposition}) is in fact a $kG$-module, and we have determined its decomposition into a direct sum of indecomposable $kG$-modules.

Suppose next that $y\in Y_{\mathrm{br}}-Y_{\infty,0}=\{y_j,y_j',y_j''\,:\,1\le j\le \ell\}$, i.e. suppose $\ell \ge 1$. In particular, $k_y=0$ and $$a(y)=\begin{cases} 2 &\mbox{if } r=0 \mbox{ and } y=y_1,\\
1& \mbox{otherwise}. \end{cases}$$ 
We define
$$\mathcal{B}^{\dagger}_y:=\begin{cases}
\{f_{y,1,1},f_{y,2,1},f_{y,3,1}\} & \mbox{if }r=0\mbox{ and } y\in\{y_1',y_1''\},\\
\emptyset & \mbox{otherwise},
\end{cases}$$
and we define $\mathcal{B}^{\ddagger}_y:=\mathcal{B}_y-\mathcal{B}^{\dagger}_y$. Moreover, we define $U^{\dagger}_y$ to be the $k$-span of $\mathcal{B}^{\dagger}_y\,ds = \{f\,ds\,:\, f\in \mathcal{B}^{\dagger}_y\}$, and we define $U^{\ddagger}_y$ to be the $k$-span of $\mathcal{B}^{\ddagger}_y\,ds = \{f\,ds\,:\, f\in \mathcal{B}^{\ddagger}_y\}$.
Applying similar arguments as in the proof of \cite[Theorem 3.7]{BleherCamacho2023}, we see that $U^{\dagger}_y$ and $U^{\ddagger}_y$ are $kH$-modules and that, as $kH$-modules,
$$U_y=U^{\dagger}_y \oplus U^{\ddagger}_y.$$
We define
$$U^{\dagger}:=\bigoplus_{y\in Y_{\mathrm{br}}-Y_{\infty,0}} U^{\dagger}_y=\begin{cases}
U^{\dagger}_{y_1'}\oplus U^{\dagger}_{y_1''} & \mbox{if }r=0,\\
0 & \mbox{otherwise}.
\end{cases}$$

If $U^{\dagger}$ is not the zero module then $r=0$. In this case, $U^{\dagger}$ is the $k$-span of 
$$\{f_{y_1',1,1}\, ds,  f_{y_1',2,1}\, ds, f_{y_1',3,1}\, ds, f_{y_1'',1,1}\, ds, f_{y_1'',2,1}\, ds, f_{y_1'',3,1}\, ds\}.$$
As in the proof of \cite[Theorem 3.7]{BleherCamacho2023}, it follows that $U^{\dagger}$ is a $kH$-module that is isomorphic to $M_{3,1}\oplus M_{3,1}$. 
Moreover, the action of $\rho$ on the socle of $U^{\dagger}$ is given as follows:
\begin{eqnarray*}
\rho(f_{y_1',1,1}\, ds) &=& \rho((s-\psi_1)^{-1}(s-\zeta\psi_1)^{-1})\,ds) =  (\zeta s-\psi_1)^{-1}(\zeta s-\zeta\psi_1)^{-1}\,d(\zeta s) \\
&=& 
\zeta^2\,(s-\zeta^2\psi_1)^{-1}(s-\psi_1)^{-1}\,ds= (f_{y_1',1,1}+\zeta f_{y_1'',1,1})\,ds
\end{eqnarray*}
and
\begin{eqnarray*}
\rho(f_{y_1'',1,1}\, ds) &=& \rho((s-\zeta\psi_1)^{-1}(s-\zeta^2\psi_1)^{-1})\,ds) = (\zeta s-\zeta\psi_1)^{-1}(\zeta s-\zeta^2\psi_1)^{-1}\,d(\zeta s) \\
&=&
\zeta^2\,(s-\psi_1)^{-1}(s-\zeta\psi_1)^{-1}\,ds=\zeta^2\, f_{y_1',1,1}\,ds.
\end{eqnarray*}
This means that $\rho$ preserves the $k$-span of $\{f_{y_1',1,1}\, ds, f_{y_1'',1,1}\, ds\}$, which is the socle of $U^{\dagger}$. By Lemmas \ref{lem:bandconnection} and \ref{lem:restrictA4toK4}, it follows that $U^{\dagger}$ is a $kG$-module that is isomorphic to $M_{3,1,i_1'}\oplus M_{3,1,i_1''}$ for certain $i_1',i_1''\in\{0,1,2\}$. To determine $i_1'$ and $i_1''$, we compute the eigenspaces of the action of $\rho$ on the socle of $U^{\dagger}$. Since
\begin{eqnarray*}
\rho((f_{y_1',1,1}+ f_{y_1'',1,1})\, ds) &=& \zeta(f_{y_1',1,1}+ f_{y_1'',1,1})\, ds,\quad\mbox{and}\\
\rho((\zeta f_{y_1',1,1}+ f_{y_1'',1,1})\, ds) &=& \zeta^2(\zeta f_{y_1',1,1}+ f_{y_1'',1,1})\, ds,
\end{eqnarray*}
we obtain 
$$U^{\dagger} \cong \begin{cases} M_{3,1,1 }\oplus M_{3,1,2} & \mbox{if }r=0,\\
0 & \mbox{otherwise}.
\end{cases}$$

We next look at the $kH$-modules $U^{\ddagger}_y$. Fix $j\in\{1,\ldots,\ell\}$, and consider the three branch points $y:= y_j$, $y' := y_j'$ and $y'' := y_j''$. 
Using Definition \ref{def:3.5}(b), we have
$$\begin{array}{r@{\;=\;}c@{\;=\;}c@{\;=\;}l}
\rho^{-1}(f_{y,1,a}\, ds) & (\zeta^{-1}s-\psi_j)^{-a}\,d(\zeta^{-1}s) & \zeta^{a-1} \,(s-\zeta\psi_j)^{-a}\,ds & \zeta^{a-1} \, f_{y',1,a}\, ds,\\
\rho^{-2}(f_{y,1,a}\, ds) & (\zeta^{-2}s-\psi_j)^{-a}\,d(\zeta^{-2}s) & \zeta^{2(a-1)} \,(s-\zeta^2\psi_j)^{-a}\,ds & \zeta^{2(a-1)} \, f_{y'',1,a}\, ds,
\end{array}$$
for $2\le a \le \mu_{y,3}+\nu_y$. Moreover,
\begin{eqnarray*}
\rho^{-1}(f_{y,1,1}\, ds)&=&\left\{
\begin{array}{r@{\;=\;}rl}
(\zeta^{-1}s-\psi_j)^{-1}\,d(\zeta^{-1}s) &f_{y',1,1}\, ds &\mbox{if } r\ge 1,\\
(\zeta^{-1}s-\psi_1)^{-1}(\zeta^{-1}s-\psi_j)^{-1}\,d(\zeta^{-1}s) & \zeta f_{y',1,1}\, ds &\mbox{if } r=0\mbox{ and } j\ge 2,
\end{array}\right.\\
\rho^{-2}(f_{y,1,1}\, ds)&=&\left\{
\begin{array}{r@{\;=\;}rl}
(\zeta^{-2}s-\psi_j)^{-1}\,d(\zeta^{-2}s) &f_{y'',1,1}\, ds &\mbox{if } r\ge 1,\\
(\zeta^{-2}s-\psi_1)^{-1}(\zeta^{-2}s-\psi_j)^{-1}\,d(\zeta^{-2}s) & \zeta^2 f_{y'',1,1}\, ds &\mbox{if } r=0\mbox{ and } j\ge 2.
\end{array}\right.
\end{eqnarray*}
This implies that the action of $\rho^{-1}$ on $\HH^0(X,\Omega_X)$ maps the socle of $U^{\ddagger}_y$ bijectively onto the socle of $U^{\ddagger}_{y'}$, and the socle of $U^{\ddagger}_{y'}$ bijectively onto the socle of $U^{\ddagger}_{y''}$. Moreover, we see from Lemma \ref{lem:valuesbranchpoints}(b) that all the parameters needed to write down Theorem \ref{thm:BC2023} are the same for the branch points $y$, $y'$ and $y''$. 
Since, by Lemma \ref{lem:Hbasis}, $\{f\, ds :f\in\mathcal{B}\}$ is a $k$-basis of $\HH^0(X,\Omega_X)$, it follows that there is a unique $kH$-submodule of $\HH^0(X,\Omega_X)$ whose socle is the same as the socle of $U^{\ddagger}_{y'}$ (resp. of $U^{\ddagger}_{y''}$) and this has to be $U^{\ddagger}_{y'}$ (resp. $U^{\ddagger}_{y''}$). This means that the $kG$-submodule of $\HH^0(X,\Omega_X)$ generated by $U^{\ddagger}_y$ equals
$$\mathrm{Ind}_H^G U^{\ddagger}_y = U^{\ddagger}_y\oplus \rho^{-1} U^{\ddagger}_y \oplus \rho^{-2} U^{\ddagger}_y = U^{\ddagger}_y\oplus U^{\ddagger}_{y'} \oplus U^{\ddagger}_{y''}.$$
Therefore, the remainder of Theorem \ref{thm:main} follows now directly from Lemmas \ref{lem:bandconnection} and \ref{lem:restrictA4toK4} and using similar arguments as in the proof of \cite[Theorem 3.7]{BleherCamacho2023}.
\end{proof}  

As a consequence of Lemma \ref{lem:HKGcovers} and Theorem \ref{thm:main}, we obtain the precise structure of the holomorphic differentials of Harbater-Katz-Gabber $G$-covers:

\begin{corollary}
\label{cor:main}
Assume that $\pi:X\to \mathbb{P}^1_k=:Z$ is a Harbater-Katz-Gabber $G$-cover, as defined in Definition $\ref{def:HKGcovers}$.
Then $\pi$ is an alternating four cover satisfying Assumption $\ref{ass:general}$, and we can assume, without loss of generality, that $Y_{\mathrm{br}}=\{\infty\}$. There is a $kG$-module isomorphism between $\HH^0(X,\Omega_X)$ and the direct sum 
$$\bigoplus_{i=0}^2 \left(N_{2l_\infty,\ast_\infty,i}^{\oplus a_{\infty,1,i}} \oplus N_{2(l_\infty-1),\ast_\infty,i}^{\oplus a_{\infty,2,i}}\right)\oplus
\bigoplus_{i=0}^2 M_{3,1,i}^{\oplus b_{\infty,i}} \oplus \bigoplus_{i=0}^2 S_i^{\oplus c_{\infty,i}}$$
where $l_\infty$ is as in the statement of Theorem $\ref{thm:BC2023}$, and the remaining parameters are as follows:
\begin{itemize}
\item[(a)] 
Write $\mu_{\infty,1}-1 = 3q_{\infty,1}+r_{\infty,1}$ for appropriate integers $q_{\infty,1}\ge 0$ and $r_{\infty,1}\in\{0,1,2\}$. Then
$$b_{\infty,i}=\begin{cases}
q_{\infty,1}+1 & \mbox{if } r_{\infty,1}\ne 0 \mbox{ and } i\equiv 1\mod 3,\mbox{ or}\\
& \mbox{if } r_{\infty,1}= 2 \mbox{ and } i\equiv 2\mod 3,\\
q_{\infty,1} & \mbox{otherwise}.
\end{cases}$$
\item[(b)] 
Write $\mu_{\infty,3}-\mu_{\infty,2} = 3q_{\infty,2}+r_{\infty,2}$ for appropriate integers $q_{\infty,2}\ge 0$ and $r_{\infty,2}\in\{0,1,2\}$. Then
$$c_{\infty,i}=\begin{cases}
q_{\infty,2}+1 & \mbox{if } r_{\infty,2}\ne 0 \mbox{ and }i\equiv \mu_{\infty,2}\mbox{ mod }3,\mbox{ or}\\
&\mbox{if } r_{\infty,2}= 2  \mbox{ and } i\equiv \mu_{\infty,2}+1\mod 3, \\
q_{\infty,2} & \mbox{otherwise}.
\end{cases}$$
\item[(c)] 
Let $a_{\infty,1}$ and $a_{\infty,2}$ be as defined in the statement of Theorem $\ref{thm:BC2023}$. For $x\in\{1,2\}$, write
$a_{\infty,x} = 3q_{\infty,3,x}+r_{\infty,3,x}$  for appropriate integers $q_{\infty,3,x}\ge 0$ and $r_{\infty,3,x}\in\{0,1,2\}$. Then
$$a_{\infty,1,i} = \begin{cases}
q_{\infty,3,1}+1 & \mbox{if } r_{\infty,3,1}\ne 0 \mbox{ and } i\equiv \mu_{\infty,1}\mod 3,\mbox{ or}\\
& \mbox{if } r_{\infty,3,1}= 2 \mbox{ and }i\equiv \mu_{\infty,1}+1\mod 3, \\
q_{\infty,3,1} & \mbox{otherwise},
\end{cases}$$
and
$$a_{\infty,2,i} = \begin{cases}
q_{\infty,3,2}+1 & \mbox{if } r_{\infty,3,2}\ne 0 \mbox{ and } i\equiv \mu_{\infty,1}+a_{\infty,1}\mod 3, \mbox{ or}\\
&\mbox{if } r_{\infty,3,2}= 2 \mbox{ and } i\equiv \mu_{\infty,1}+a_{\infty,1}+1\mod 3, \\
q_{\infty,3,2} & \mbox{otherwise}.
\end{cases}$$
\end{itemize}
\end{corollary}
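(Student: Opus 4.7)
The proof plan is to deduce Corollary \ref{cor:main} as a direct specialization of Theorem \ref{thm:main}. The first step is to use Lemma \ref{lem:HKGcovers} to conclude that any Harbater-Katz-Gabber $G$-cover $\pi\colon X\to Z$ is an alternating four cover satisfying Assumption \ref{ass:general} with exactly one totally ramified point on $X$. Tracing through the proof of Lemma \ref{lem:HKGcovers}, one sees that $\#Y_{\mathrm{br}}=1$. Combined with Notation \ref{not:specialbranchY}, after replacing $s$ by $s^{-1}$ if necessary, we may assume $Y_{\mathrm{br}}=Y_{\infty,0}=\{\infty\}$, so that in the notation of Theorem \ref{thm:main} we have $r=1$ and $\ell=0$.

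The second step is to substitute $r=1$ and $\ell=0$ into every direct sum and every parameter appearing in Theorem \ref{thm:main}. All of the direct sums indexed by $j$ in the ranges $1\le j\le\ell_1$, $\ell_1+1\le j\le\ell_2$, $\ell_2+1\le j\le\ell_3$, $\ell_3+1\le j\le\ell$ collapse to empty sums; likewise the correction term $\sum_{j=1}^{\ell}\mu_{y_j,1}$ in the formula for $b_i$ and the correction term $\sum_{j=1}^{\ell}(\mu_{y_j,3}-\mu_{y_j,2})$ in the formula for $c_i$ vanish. Since $r=1$, the Kronecker delta $\delta(r,i)$ is $0$ for all $i\in\{0,1,2\}$. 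The remaining sums over $Y_{\infty,0}$ each contribute only the single summand $y=\infty$, giving the four direct sums that appear in the statement of Corollary \ref{cor:main}.

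The third step is to unwind the specific values of $\epsilon_y$, $k_y$, and $a(y)$ for $y=\infty$. By Notation \ref{not:specialbranchY} and Definition \ref{def:uniformky} we have $\epsilon_\infty=-1$ and $k_\infty=-2$, while $a(\infty)=0$ by Definition \ref{def:3.5}(a), since $r\ge 1$. Plugging these into the formulas of Theorem \ref{thm:main}(a), (b), (c), the three indices that determine the shifts become
\begin{align*}
1-\epsilon_\infty a(\infty)&\equiv 1\pmod 3, & 1-\epsilon_\infty(a(\infty)+1)&\equiv 2\pmod 3,\\
1-\epsilon_\infty(\mu_{\infty,2}+k_\infty+1)&\equiv \mu_{\infty,2}\pmod 3, & 1-\epsilon_\infty(\mu_{\infty,2}+k_\infty+2)&\equiv \mu_{\infty,2}+1\pmod 3,\\
1-\epsilon_\infty(\mu_{\infty,1}+k_\infty+1)&\equiv \mu_{\infty,1}\pmod 3, & 1-\epsilon_\infty(\mu_{\infty,1}+k_\infty+2)&\equiv \mu_{\infty,1}+1\pmod 3,
\end{align*}
with the analogous identities for the shift involving $a_{\infty,1}$. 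Similarly, the integer $\mu_{y,1}+k_y-a(y)+1$ specializes to $\mu_{\infty,1}-1$, exactly matching the definition of $q_{\infty,1}$ and $r_{\infty,1}$ in part (a) of the corollary, and $\mu_{y,3}-\mu_{y,2}$ is already in the required form for part (b).

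No single step is a serious obstacle here; the work is entirely bookkeeping, translating the parameters of Theorem \ref{thm:main} under the specialization $Y_{\mathrm{br}}=\{\infty\}$. The only place where one must take care is in verifying the congruences modulo $3$ for the shifts, since the sign $\epsilon_\infty=-1$ flips the residue classes in the formulas of Theorem \ref{thm:main}; once these are tabulated, one reads off parts (a), (b), and (c) of the corollary directly from the corresponding parts of the theorem. This completes the proof.
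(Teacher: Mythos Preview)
Your proof is correct and follows exactly the approach indicated in the paper, which presents Corollary \ref{cor:main} as an immediate consequence of Lemma \ref{lem:HKGcovers} and Theorem \ref{thm:main} without further argument. You have simply supplied the bookkeeping details (specializing to $r=1$, $\ell=0$, and substituting $\epsilon_\infty=-1$, $k_\infty=-2$, $a(\infty)=0$ into the congruences) that the paper leaves implicit.
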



\section{Examples}
\label{s:examples}

Throughout this section, we make the same assumptions and use the same notations as in the statement of Theorem \ref{thm:main}. The main goal of this section is to show the following result.

\begin{proposition}
\label{prop:list}
The list of isomorphism classes of indecomposable $kG$-modules that actually occur as direct summands of $\HH^0(X,\Omega_X)$ in the situation of Theorem $\ref{thm:main}$ for various $X$ is infinite and given as follows:
\begin{equation}
\label{eq:themodules}
\left\{N_{2n,0,i},N_{2n,\infty,i}, B_{6n,\mu}, M_{3,1,i}, S_i\;:\; n\in\mathbb{Z}^+,\mu\in k^\times, i\in \{0,1,2\}\right\}.
\end{equation}
Moreover, if we restrict ourselves to Harbater-Katz-Gabber $G$-covers $X$, as in Corollary $\ref{cor:main}$, then the list in $(\ref{eq:themodules})$ is replaced by
\begin{equation}
\label{eq:themodulesHKG}
\left\{N_{2n,0,i},N_{2n,\infty,i}, M_{3,1,i}, S_i\;:\; n\in\mathbb{Z}^+, i\in \{0,1,2\}\right\}.
\end{equation}
In particular, both lists $(\ref{eq:themodules})$ and $(\ref{eq:themodulesHKG})$ contain indecomposable $kG$-modules of arbitrarily large $k$-dimension.
\end{proposition}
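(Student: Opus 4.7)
The plan is to split the argument into two parts: a \emph{containment} direction, showing that every indecomposable summand of $\HH^0(X,\Omega_X)$ must belong to the displayed list, and a \emph{realization} direction, exhibiting explicit examples that hit every isomorphism class in that list.

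For containment, we read off Theorem \ref{thm:main}: inspection of the six blocks of the direct sum decomposition shows that the only indecomposable $kG$-modules occurring as summands of $\HH^0(X,\Omega_X)$ are of the form $N_{2n,0,i}$, $N_{2n,\infty,i}$ (from the $y\in Y_{\infty,0}$ block and the blocks with $\lambda_{y_j}\in\{\zeta,\zeta^2\}$), $B_{6n,\phi_j^3}$ and $B_{6n,1}$ (from the blocks indexed by $\ell_2<j\le\ell_3$ and $\ell_3<j\le \ell$), $M_{3,1,i}$, and $S_i$. Since $\phi_j=\tfrac{\zeta+\lambda_j}{\zeta^2+\lambda_j}$ is a Möbius transformation of $\lambda_j$ sending $\{0,1,\zeta,\zeta^2,\infty\}$ into itself, as $\lambda_j$ ranges over $k\setminus\{0,1,\zeta,\zeta^2\}$ the element $\phi_j$ ranges over $k\setminus\{0,1,\zeta,\zeta^2,\infty\}$; cubing then sends this surjectively onto $k^\times\setminus\{1\}$. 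Combined with the $\mu=1$ case from $\lambda_j\in\{0,1,\infty\}$, the list of $\mu$'s arising as $B_{6n,\mu}$ is all of $k^\times$, which matches (\ref{eq:themodules}). The HKG restriction to (\ref{eq:themodulesHKG}) is immediate from Corollary \ref{cor:main}, since only $y=\infty$ contributes.

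For realization in the HKG case, the plan is to take $r=1$, $\ell=0$, and $Y_{\mathrm{br}}=\{\infty\}$, and to cook up $\alpha\in k(s)$ by choosing its Laurent expansion at $\infty$. Concretely, for prescribed odd integers $1\le m_\infty\le M_\infty$ and a prescribed $\lambda_\infty\in\{\zeta,\zeta^2\}$, one writes
\begin{equation*}
\alpha = \lambda_\infty^{-1}\,s^{m_\infty}\bigl(1 + \text{lower order corrections}\bigr) + s^{M_\infty}\bigl(1+\cdots\bigr),
\end{equation*}
inserting only monomials $s^e$ with $e\not\equiv 0\pmod 3$ so that $\mathrm{Tr}_{K/J}(\alpha)=0$ holds automatically, and tuning the correction terms to prescribe $\delta_\infty$ (equivalently $l_\infty$ and the balance between $a_{\infty,1}$ and $a_{\infty,2}$). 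The residue classes mod $3$ of $\mu_{\infty,1},\mu_{\infty,2},\mu_{\infty,3}$ are then fixed by $m_\infty$ and $M_\infty$, and Corollary \ref{cor:main} yields any prescribed $N_{2n,0,i}$, $N_{2n,\infty,i}$, $M_{3,1,i}$ or $S_i$ as a summand for suitable choices; letting $m_\infty,M_\infty\to\infty$ produces arbitrarily large $k$-dimensions, proving the HKG part of the proposition including the list~(\ref{eq:themodulesHKG}).

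For realization of the band modules $B_{6n,\mu}$ we pass to the case $\ell\ge 1$ with branch points at $\psi_j,\zeta\psi_j,\zeta^2\psi_j$ for a chosen $\psi_j\in k^\times$. Using partial fractions in the variable $s^{-1}$ as in the proof of Lemma~\ref{lem:branchpoints} we write
\begin{equation*}
\alpha = \frac{a_{j,0}}{s^{-1}-\psi_j^{-1}} + \frac{a_{j,1}}{\zeta^{-1}s^{-1}-\psi_j^{-1}} - \frac{a_{j,0}+a_{j,1}}{\zeta^{-2}s^{-1}-\psi_j^{-1}} + (\text{higher poles})
\end{equation*}
so that $\mathrm{Tr}_{K/J}(\alpha)=0$; the leading coefficients of $\widetilde{\alpha}$ and $\widetilde{\rho\alpha}$ at $y_j$ then determine $\lambda_j=\lambda_{y_j}$, and by scaling $a_{j,0}$ and $a_{j,1}$ independently we can realize any $\lambda_j\in k\setminus\{0,1,\zeta,\zeta^2\}$, hence by the Möbius argument above any $\phi_j^3=\mu\in k^\times\setminus\{1\}$. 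The degenerate case $\lambda_j\in\{0,1,\infty\}$ (attained, for instance, by making $a_{j,0}=0$ or $a_{j,1}=0$ so that $m_{y_j}<M_{y_j}$) gives $\mu=1$. Tuning the higher poles prescribes $\delta_{y_j}$ and $l_{y_j}$, so $B_{6n,\mu}$ appears for every $n\ge 1$ and $\mu\in k^\times$. Combining with the HKG construction above, every module in (\ref{eq:themodules}) is realized, and $k$-dimensions can be made arbitrarily large by increasing the pole orders.

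The principal technical obstacle in both constructions is the trace condition $\mathrm{Tr}_{K/J}(\alpha)=0$: one must verify simultaneously that $\alpha\ne\xi^2-\xi$ for any $\xi\in K$ (so that Theorem \ref{thm:GaloisA4} produces an honest $A_4$-extension, not a proper Galois subextension), that the ramification invariants $m_y,M_y,\lambda_y,\delta_y$ computed from Remark \ref{rem:BC2023} and Notation \ref{not:deltalambda} match the prescribed ones, and that the congruence arithmetic in parts (a)--(c) of Theorem \ref{thm:main} picks out the desired label $i\in\{0,1,2\}$. Each of these is a bookkeeping check once the polynomial/partial-fraction ansatz is fixed; the main genuinely substantive step is the Möbius-and-cubing calculation that shows $\phi_j^3$ covers all of $k^\times\setminus\{1\}$ as $\lambda_j$ varies over $k\setminus\{0,1,\zeta,\zeta^2\}$.
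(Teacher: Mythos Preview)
Your overall strategy matches the paper's: containment is read off Theorem~\ref{thm:main}, and realization is done by constructing explicit~$\alpha$'s with controlled ramification data. The paper likewise splits the realization into an HKG example (your polynomial ansatz at~$\infty$) for the string modules and examples with $\ell=1$ for the band modules $B_{6n,\mu}$.

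However, your HKG ansatz contains a genuine confusion. By Lemma~\ref{lem:valuesbranchpoints}(a), for $y\in Y_{\infty,0}$ one \emph{always} has $m_y=M_y=p_y$, so you cannot prescribe $m_\infty<M_\infty$; moreover $\lambda_\infty=\zeta^{\epsilon_\infty p_\infty}$ is \emph{forced} by $p_\infty\bmod 3$, not freely choosable. All the variation in $l_\infty$ must therefore come from~$\delta_\infty$, which in turn is controlled by the Artin--Schreier generator $w_\infty$ (Notation~\ref{not:uniform}), not directly by the Laurent expansion of~$\alpha$. The paper's Example~1 handles this by writing down a specific $\alpha = s^{(32n-2r_n+4)x-3}(s^{16x}+1)$, explicitly constructing $w_\infty = \rho u + (\zeta^x+\sum_{j=1}^n s^{-8jx})u$, and computing $\delta_\infty=8x$ to obtain $l_\infty=n+1$; it then checks that $q_{\infty,1},q_{\infty,2},q_{\infty,3,1},q_{\infty,3,2}\ge 1$ so that all three values of $i$ occur simultaneously. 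This is more than bookkeeping: verifying $\mathrm{ord}_{x_\infty}(w_\infty^2-w_\infty)$ and hence $\delta_\infty$ requires a careful cancellation argument. Similarly, for the band modules the paper gives two separate explicit rational functions (Examples~2 and~3) and computes $w_{y_1}$ directly to pin down $\delta_{y_1}$ and $l_{y_1}$. Your partial-fraction sketch is on the right track, and your M\"obius-and-cubing argument for the range of $\phi_j^3$ is correct and a bit more explicit than what the paper writes, but the step ``tuning the higher poles prescribes $\delta_{y_j}$ and $l_{y_j}$'' hides exactly the computation that constitutes the proof.
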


We prove this proposition by giving appropriate examples that show that all modules in (\ref{eq:themodules}) (resp. in (\ref{eq:themodulesHKG})) actually occur. 

\subsection{Example 1}
\label{ss:ex1}

In the first example, we concentrate on Harbater-Katz-Gabber covers and show that all indecomposable modules in (\ref{eq:themodulesHKG}) occur. 

Let $n\in\mathbb{Z}^+$, and suppose $n\equiv r_n\mod 3$, where $r_n\in\{0,1,2\}$. Let $x\in\{1,2\}$, and define
$$\alpha:= s^{(32n-2r_n+4)x-3} (s^{16x}+1).$$
Then 
$$\rho\alpha = \zeta^{2x}\, s^{(32n-2r_n+4)x-3} (s^{16x}+\zeta^{2x}).$$
In particular, it follows immediately that $\mathrm{Tr}_{K/J}(\alpha)=0$.
We see that for $h_\alpha=0$, $\alpha$ has the same form as in Notation \ref{not:branchA4}(b), where 
$$\ell=0, \quad d=16x, \quad  p_\infty-d=(32n-2r_n+4)x-3.$$
In other words, $p_\infty=(32n-2r_n+20)x-3$. Moreover, since $\mathrm{ord}_{s^{-1}}(\alpha) = -(32n-2r_n+20)x+3$ is negative and odd, it follows that $\alpha\neq \xi^2-\xi$ for any $\xi\in k(s)$.
We have $Y_\mathrm{br}=\{\infty\}$. Moreover, if  $x_\infty\in X$ lies above $\infty\in Y$, then
$$\mathrm{ord}_{x_\infty}(u)=\mathrm{ord}_{x_\infty}(\rho u)=-(64n-4r_n+40)x+6.$$
Let 
$$w_\infty = \rho u + \left(\zeta^x+\sum_{j=1}^n s^{-8jx}\right)u.$$
A straightforward calculation shows that 
\begin{eqnarray*}
w_\infty^2-w_\infty&=&s^{(16n-2r_n+4)x-3} + 
\left(1+s^{-8x}+s^{-16x} +\sum_{j=2}^n\left(s^{-8jx}+s^{-16jx}\right)\right)u.
\end{eqnarray*}
Since 
$$\mathrm{ord}_{x_\infty}\left(s^{(16n-2r_n+4)x-3}\right)=-(64n-8r_n+16)x+12
> -(64n-4r_n+40)x+6,$$
we obtain 
$$\mathrm{ord}_{x_\infty}(w_\infty^2-w_\infty)=-(64n-4r_n+40)x+6$$
which implies $\mathrm{ord}_{x_\infty}w_\infty=-(32n-2r_n+20)x+3$.
By Notations \ref{not:deltalambda} and \ref{not:organizebranch}, we have
$$\delta_\infty= 8x, \quad \lambda_\infty=\zeta^x,
\quad\mbox{and}\quad 
\ast_\infty = \begin{cases}
0 & \mbox{if } x=1, \\
\infty &  \mbox{if } x=2.
\end{cases}$$

We next show that, for $x\in\{1,2\}$,  $q_{\infty,1},~q_{\infty,2},~q_{\infty,3,1},~q_{\infty,3,1}$ from Theorem \ref{thm:main} (and also from Corollary \ref{cor:main}) are at least one and that $l_\infty=n+1$, which will show that all indecomposable modules in (\ref{eq:themodulesHKG}) occur. 

We have $m_\infty=M_\infty=(32n-2r_n+20)x-3$, which implies
\begin{eqnarray*}
\mu_{\infty,1} &=& (8n+5)x-\left\lceil \frac{r_nx}{2}\right\rceil,\\
\mu_{\infty,2} &=& (16n+10-r_n)x-1,\\
\mu_{\infty,3} &=& (24n+15-r_n)x-1 -\left\lceil \frac{r_nx+1}{2}\right\rceil.
\end{eqnarray*}
Since
$$\mu_{\infty,1}-1 = (8n+5)x-1-\left\lceil \frac{r_nx}{2}\right\rceil
\ge 8+5-1-2 = 10> 3,$$
$q_{\infty,1}\ge 1$. This shows that, for all $i\in\{0,1,2\}$, $M_{3,1,i}$ is a direct summand of  $\HH^0(X,\Omega_X)$. Since 
$$\mu_{\infty,3}-\mu_{\infty,2} = (8n+5)x-\left\lceil \frac{r_nx+1}{2}\right\rceil
\ge 8+5-3 = 10 > 3,$$
$q_{\infty,2}\ge 1$. This shows that, for all $i\in\{0,1,2\}$, $S_i$ is a direct summand of  $\HH^0(X,\Omega_X)$. Finally, we consider
$$\mu_{\infty,2}-\mu_{\infty,1} = (8n+5-r_n)x-1+\left\lceil \frac{r_nx}{2}\right\rceil.$$
Since 
$$0< (5-r_n)x-1+\left\lceil \frac{r_nx}{2}\right\rceil < 8x$$
and since $\delta_\infty=8x$, we obtain
$$l_\infty = n+1\quad \mbox{and}\quad
a_{\infty,1} = (5-r_n)x-1+\left\lceil \frac{r_nx}{2}\right\rceil\ge 3,$$
which implies $q_{\infty,3,1}\ge 1$.
On the other hand,
$$a_{\infty,2} = 8x-a_{\infty,1} = (3+r_n)x+1-\left\lceil \frac{r_nx}{2}\right\rceil\ge 3,$$
which implies $q_{\infty,3,2}\ge 1$.
This shows that, for all $i\in\{0,1,2\}$, both $N_{2(n+1),\ast_\infty, i}$ and $N_{2n,\ast_\infty, i}$ are direct summands of $\HH^0(X,\Omega_X)$.


\subsection{Example 2}
\label{ss:ex2}

Let $n\in\mathbb{Z}^+$, and define
$$\alpha:=\frac{\zeta\, s^{12n+2}}{(s-1)^{4n-1}(s-\zeta)^{4n-3}(s-\zeta^2)^{4n-1}}.$$ 
Then
$$\rho\alpha=
\frac{\zeta^2\,s^{12n+2}}{(s-1)^{4n-3}(s-\zeta)^{4n-1}(s-\zeta^2)^{4n-1}}.$$
Since
$$\alpha=\frac{\zeta \,s^{12n+2}(s^2-\zeta^2)}{(s^3-1)^{4n-1}},$$
it follows that $\mathrm{Tr}_{K/J}(\alpha)=0$.
We see that for $h_\alpha=0$, $\alpha$ has the same form as in Notation \ref{not:branchA4}(b), where 
$$\ell=1, \quad \psi_1=1,\quad d=0, \quad p_1=p_1''=4n-1,p_1'=4n-3,\quad p_\infty+(p_1+p_1'+p_1'')-d=12n+2.$$
In other words, $p_\infty=7$. Moreover, since $\mathrm{ord}_{s^{-1}}(\alpha) = -7$
is negative and odd, it follows that $\alpha\neq \xi^2-\xi$ for any $\xi\in k(s)$.
We have $Y_\mathrm{br}=\{\infty,y_1,y_1',y_1''\}$, where $y_1$ corresponds to $s-\psi_1=s-1$ and $m_{y_1}=4n-3<4n-1=M_{y_1}$. By Notations \ref{not:deltalambda} and \ref{not:organizebranch}, we have
$$\delta_{y_1}=-1,\quad \lambda_1\in \{0,1,\infty\}\quad \mbox{and}\quad \phi_1\in \{1,\zeta,\zeta^2\}.$$
Moreover,
$$\mu_{y_1,2}-\mu_{y_1,1} +\frac{M_{y_1}-m_{y_1}}{2}= 
(2n-1)-n+1=n$$
which implies, by Theorem \ref{thm:BC2023}, that 
$$l_{y_1}=n,\quad a_{y_1,1}=1,\quad\mbox{and}\quad
a_{y_1,2}=0.$$
By Theorem \ref{thm:main}, we obtain that $B_{6n,1}$ occurs as a direct summand of $\HH^0(X,\Omega_x)$ with multiplicity one.


\subsection{Example 3}
\label{ss:ex3}

Let $n\in\mathbb{Z}^+$, let $\mu\in k^\times-\{1\}$, and let $\psi \in k^\times-\{1,\zeta,\zeta^2\}$ be such that $\psi^3=\mu$. Define 
$$\alpha:=\frac{\psi^{3(4n+1)}s^{12n+2}(s^2+1)}{(s^3-\psi^3)^{4n+1}},$$ 
which implies
$$\rho\alpha=
\frac{\psi^{3(4n+1)}\zeta^2s^{12n+2}(\zeta^2s^2+1)}{(s^3-\psi^3)^{4n+1}}$$
and $\mathrm{Tr}_{K/J}(\alpha)=0$. 
We see that for $h_\alpha=0$, $\alpha$ has the same form as in Notation \ref{not:branchA4}(b), where 
$$\ell=1, \quad \psi_1=\psi,\quad d=2, \quad p_1=p_1'=p_1''=4n+1,\quad p_\infty+(p_1+p_1'+p_1'')-d=12n+2.$$
In other words, $p_\infty=1$. Moreover, since $\mathrm{ord}_{s^{-1}}(\alpha) = -1$
is negative and odd, it follows that $\alpha\neq \xi^2-\xi$ for any $\xi\in k(s)$.
We have $Y_\mathrm{br}=\{\infty,y_1,y_1',y_1''\}$, where $y_1$ corresponds to $s-\psi_1=s-\psi$ and $m_{y_1}=M_{y_1}=4n+1$. Moreover, if  $x_1\in X$ lies above $y_1$, then
$$\mathrm{ord}_{x_1}(u)=\mathrm{ord}_{x_1}(\rho u)=-(8n+2).$$
Let 
$$w_{y_1} = \rho u + \left(\frac{\zeta+\zeta^2\psi}{1+\psi}+\sum_{j=1}^n
\frac{(s-\psi)^j}{(1+\psi)^{j+1}}\right)u.$$
A straightforward calculation shows that then
\begin{eqnarray*}
w_{y_1}^2-w_{y_1}&=&\frac{\psi^{3(4n+1)}s^{12n+2}}{(1+\psi^2)^{n+1}(s-\zeta\psi)^{4n+1}(s-\zeta^2\psi)^{4n+1}}\,(s-\psi)^{-(2n-1)}\\
&+& \left(\frac{1+\psi+\psi^2}{(1+\psi)^2}+\frac{(s-\psi)}{(1+\psi)^2}+\frac{(s-\psi)^2}{(1+\psi)^4}+\sum_{j=2}^n\left(\frac{(s-\psi)^j}{(1+\psi)^{j+1}}+\frac{(s-\psi)^{2j}}{(1+\psi)^{2(j+1)}}\right)\right)u.
\end{eqnarray*}
Since $1+\psi+\psi^2\ne 0$ and since $\mathrm{ord}_{x_1}((s-\psi)^{-(2n-1)})=-(8n-4)>-(8n+2)$, we obtain 
$$\mathrm{ord}_{x_1}(w_{y_1}^2-w_{y_1})=-(8n+2)$$
which implies $\mathrm{ord}_{x_1}w_{y_1}=-(4n+1)$.
By Notations \ref{not:deltalambda} and \ref{not:organizebranch}, we have
$$\delta_{y_1}= 1, \quad \lambda_1=\lambda_{y_1}=\frac{\zeta+\zeta^2\psi}{1+\psi}, \quad\mbox{and}\quad
\phi_1 =  \frac{\zeta+\lambda_1}{\zeta^2+\lambda_1}=\psi.$$
Since $m_{y_1}=M_{y_1}=4n+1$, we have
$$\mu_{y_1,2}-\mu_{y_1,1} = 
(2n+1)-(n+1)=n$$
which implies, by Theorem \ref{thm:BC2023}, that 
$$l_{y_1}=n,\quad a_{y_1,1}=1,\quad\mbox{and}\quad
a_{y_1,2}=0.$$
By Theorem \ref{thm:main}, we obtain that
$$B_{6n,\mu} = B_{6l_{y_1},\phi_1^3}$$
occurs as a direct summand of $\HH^0(X,\Omega_x)$ with multiplicity one.
In particular, Example 3 finishes the proof of Proposition \ref{prop:list}.


\section{Appendix: Indecomposable modules in characteristic two}
\label{s:repV4A4}

Let $k$ be an algebraically closed field of characteristic 2, and let 
$G=\langle \sigma,\rho\rangle\cong A_4$, $H=\langle \sigma,\tau\rangle\cong\mathbb{Z}/2\times\mathbb{Z}/2$ and $C=G/H=\langle \overline{\rho}\rangle \cong \mathbb{Z}/3$ be as in Notation \ref{not:general}.

In this appendix, we describe the finitely generated indecomposable modules for $kH$ and $kG$, and we also describe the relationship between them, given by induction and restriction.


\subsection{Basic string algebras over $k$}
\label{ss:stringalg}

Let $\Omega$ be a basic string algebra over $k$, in the sense of \cite[Section 3]{ButlerRingel1987}. This means that $\Omega\cong k\mathcal{Q}/\mathcal{I}$ for a finite quiver $\mathcal{Q}$ and an admissible ideal $\mathcal{I}$ of $k\mathcal{Q}$ satisfying the following three conditions:
\begin{enumerate}
\item[(i)] Every vertex of $\mathcal{Q}$ is the starting point of at most two arrows, and every vertex is the end point of at most two arrows.
\item[(ii)] Given an arrow $\beta$ in $\mathcal{Q}$, there is at most one arrow $\gamma$ with $s(\beta)=e(\gamma)$ and $\beta\gamma\not\in \mathcal{I}$, and there is at most one arrow $\alpha$ with $s(\alpha)=e(\beta)$ and $\alpha\beta\not\in \mathcal{I}$.
\item[(iii)] The ideal $\mathcal{I}$ is generated by zero relations, i.e. by paths in $k\mathcal{Q}$.
\end{enumerate}
By \cite[Section 3]{ButlerRingel1987}, it follows that the isomorphism classes of all finitely generated indecomposable $\Omega$-modules are given by so-called string and band modules, which we describe in the next definition and remark.

\begin{definition}
\label{def:stringsbands}
Let $\Omega = k\mathcal{Q}/\mathcal{I}$ be a basic string algebra.
\begin{enumerate}
\item[(a)] A word $w$ of length $l\ge 1$ is a sequence $w=w_1 w_2\cdots w_l$ where each $w_i$ is either an arrow $\alpha$ of $\mathcal{Q}$ or its formal inverse $\alpha^{-1}$ such that $s(w_i)=e(w_{i+1})$ for $1\leq i\leq l-1$. Moreover, to each vertex $v$ of $\mathcal{Q}$, we associate a word $e_v$ of length $0$. We also denote by $e_v$ the primitive central idempotent of $\Omega$ associated with the vertex $v$.
\item[(b)] We define two different equivalence relations on the set of words:
\begin{enumerate}
\item[(i)] $\sim$ identifies each word with its inverse, and 
\item[(ii)] $\sim_r$ identifies each word with its rotations and their inverses, if they are defined.
\end{enumerate}
\item[(c)] A string is either of the form $w=e_v$ for a vertex $v$ of $\mathcal{Q}$, or
a representative of words $w=w_1\dots w_l$ of length $l\ge 1$ under the relation $\sim$ such that $w_i\neq w_{i+1}^{-1}$ for $1\leq i\leq l-1$ and no subword of $w$ or its inverse belongs to $\mathcal{I}$.
\item[(d)] A band is a representative of words $w=w_1\dots w_l$ of length $l\ge 1$ under the relation $\sim_r$ such that $w_i\neq w_{i+1}^{-1}$ for $1\le i \le l-1$,  $w_l \neq w_1^{-1}$, $w$ is not a proper power of a subword, and for all $a\ge 1$, the power $w^a$ is defined and no subword of $w^a$ or its inverse belongs to $\mathcal{I}$.
\end{enumerate}
\end{definition}

\begin{remark}
\label{rem:stringbandmodules}
Let $\Omega = k\mathcal{Q}/\mathcal{I}$ be a basic string algebra. By \cite[Section 3]{ButlerRingel1987} (see also \cite[Chapter II]{Erdmann1990}), the following list of $\Omega$-modules provides a complete set of representatives for the isomorphism classes of finitely generated indecomposable $\Omega$-modules.
\begin{enumerate}
\item[(i)] For each vertex $v$ of $\mathcal{Q}$, there is precisely one isomorphism class of simple $\Omega$-modules, corresponding to the string $e_v$ of length $0$. 
\item[(ii)] For each string $w=w_1 w_2\cdots w_l$ of length $l\ge 1$, there is precisely one isomorphism class of string modules $M(w)$ for $\Omega$ whose $k$-dimension is $l+1$. More precisely, the $\Omega$-action on $M(w)$ is given by a $k$-linear map
$$\varphi_w:\Omega \to \mathrm{Mat}_{l+1}(k)$$
such that for each vertex $v$ and each arrow $\alpha$ in $\mathcal{Q}$, the $(i,j)$-entry of $\varphi_w(v)$ and $\varphi_w(\alpha)$, respectively, is as follows:
\begin{eqnarray*}
\varphi_w(v)_{i,j} &=& \begin{cases}
1, & \mbox{if $1\le i=j\le l$ and $e(w_j)=v$, or }\\
& \mbox{if $i=j=l+1$ and $s(w_l)=v$,}\\
0, & \mbox{otherwise},
\end{cases}\\[2ex]
\varphi_w(\alpha)_{i,j} &=& \begin{cases}
1, & \mbox{if $1\le i=j-1\le l$ and $w_{j-1}=\alpha$, or }\\
& \mbox{if $2\le i=j+1\le l+1$ and $w_j=\alpha^{-1}$,}\\
0, & \mbox{otherwise}.
\end{cases}
\end{eqnarray*}
\item[(iii)] For each band $w=w_1 w_2\cdots w_l$ and each $n\in\mathbb{Z}^+$ and each $\mu\in k^\times$, there is precisely one isomorphism class of band modules $M(w,n,\mu)$ for $\Omega$ whose $k$-dimension is $l\cdot n$. More precisely, we can assume, without loss of generality, that $w_1$ is an arrow. Then the $\Omega$-action on $M(w,n,\mu)$ is given by a $k$-linear map
$$\varphi_{w,n,\mu}:\Omega \to \mathrm{Mat}_{n\cdot l}(k)$$
such that for each vertex $v$ and each arrow $\alpha$ in $\mathcal{Q}$, $\varphi_{w,n,\mu}(v)$ and $\varphi_{w,n,\mu}(\alpha)$ are $l\times l$ block matrices with blocks of size $n\times n$, where the $(i,j)$-block is as follows:
\begin{eqnarray*}
\varphi_{w,n,\mu}(v)_{i,j} &=& \begin{cases}
I_n, & \mbox{if $1\le i=j\le l$ and $e(w_j)=v$,}\\
0_n, & \mbox{otherwise},
\end{cases}\\[2ex]
\varphi_{w,n,\mu}(\alpha)_{i,j}&=& \begin{cases}
J_n(\mu), & \mbox{if $1=i=j-1$ and $w_1=\alpha$,}\\
I_n, & \mbox{if $2\le i=j-1\le l-1$ and $w_{j-1}=\alpha$, or}\\
& \mbox{if $i=l$ and $j=1$ and $w_l=\alpha$, or} \\
& \mbox{if $2\le i=j+1\le l$ and  $w_j=\alpha^{-1}$, or} \\
& \mbox{if $i=1$ and $j=l$ and $w_nl=\alpha^{-1}$,}\\
0_n, & \mbox{otherwise},
\end{cases}
\end{eqnarray*}
where $I_n$ denotes the $n\times n$ identity matrix, $0_n$ denotes the $n\times n$ zero matrix, and  $J_n(\mu)$ is the upper triangular Jordan block of size $n\times n$ with eigenvalue $\mu$.
\end{enumerate}
\end{remark}


\subsection{Indecomposable $kH$-modules}
\label{ss:K4char2}

\begin{definition}
\label{def:AB}
Define 
$$A:=\sigma-1 \quad \mbox{and} \quad B:=\tau-1.$$
\end{definition}

\begin{remark}
\label{rem:kH1}
We have
$$kH\cong k[x,y]/\langle x^2,y^2,xy-yx\rangle.$$
Since $A$ and $B$ from Definition \ref{def:AB} satisfy the relations
$$A^2=0, \quad B^2=0 \quad \mbox{and} \quad AB-BA = 0,$$
it follows that $kH=k[A,B]$. Since $\mbox{socle}(kH) = \langle AB-BA\rangle$, we obtain that 
$$\overline{kH}:=kH/\text{socle}(kH) \cong k[x,y]/\langle x^2,y^2,xy,yx \rangle$$
is a string algebra. By \cite[Section II.1.3]{Erdmann1990}, it follows that the finitely generated non-projective indecomposable $kH$-modules coincide with the finitely generated indecomposable $\overline{kH}$-modules, viewed as $kH$-modules by inflation. 
\end{remark}

The next notation is a consequence of Remark \ref{rem:kH1}.

\begin{nota}
\label{not:K4indecomposables}
The following infinite list of $kH$-modules provides a complete set of representatives for the isomorphism classes of finitely generated non-projective indecomposable $kH$-modules.
\begin{enumerate}[(1)]
\item There is precisely one isomorphism class of simple $kH$-modules corresponding to the unique string of length zero, which we will denote by $k$.
\item For each positive integer $n$, there are precisely two isomorphism classes of $kH$-modules of $k$-dimension $2n+1$ corresponding to strings of length $2n$, which we will denote as follows:
\begin{itemize}
\item[(i)] $M_{2n+1,1}=M_{2n+1,1}^{(A,B)}$ corresponding to the string $(B^{-1}A)^n$,
\item[(ii)] $M_{2n+1,2}=M_{2n+1,2}^{(A,B)}$ corresponding to the string $(AB^{-1})^n$.
\end{itemize}
\item For each positive integer $n$, there are precisely two isomorphism classes of indecomposable $kH$-modules of $k$-dimension $2n$ corresponding to strings of length $2n-1$, which we will denote as follows:
\begin{itemize}
\item[(i)] $N_{2n,0}=N_{2n,0}^{(A,B)}$ corresponding to the string $(A^{-1}B)^{n-1}A^{-1}$,
\item[(ii)] $N_{2n,\infty}=N_{2n,\infty}^{(A,B)}$ corresponding to the string $(B^{-1}A)^{n-1}B^{-1}$.
\end{itemize}
\item For each positive integer $n$ and each $\mu \in k^\times$, there is precisely one isomorphism class of indecomposable $kH$-modules of dimension $2n$ corresponding to the unique band $BA^{-1}$, which we will denote by $N_{2n,\mu}=N_{2n,\mu}^{(A,B)}$.
\end{enumerate}
\end{nota}


\subsection{Indecomposable $kG$-modules}
\label{ss:A4char2}

\begin{definition} 
\label{def:A4quiver}
Define $\Lambda=kQ/I$, where  
\begin{eqnarray*}
Q&=&\begin{tikzcd}
0 &&&& 1 \\
\\
&& 2
\arrow["\gamma_{10}", curve={height=-6pt}, from=1-1, to=1-5]
\arrow["\gamma_{21}", curve={height=-6pt}, from=1-5, to=3-3]
\arrow["\gamma_{02}", curve={height=-6pt}, from=3-3, to=1-1]
\arrow["\delta_{01}", curve={height=-6pt}, from=1-5, to=1-1]
\arrow["\delta_{20}", curve={height=-6pt}, from=1-1, to=3-3]
\arrow["\delta_{12}", curve={height=-6pt}, from=3-3, to=1-5]
\end{tikzcd}\quad \mbox{and}\\
I&=&\langle \delta_{01}\gamma_{10}-\gamma_{02}\delta_{20}, \delta_{12}\gamma_{21}-\gamma_{10}\delta_{01}, \delta_{20}\gamma_{02}-\gamma_{21}\delta_{12},\\
&&\gamma_{21}\gamma_{10}, \gamma_{10}\gamma_{02}, \gamma_{02}\gamma_{21},
\delta_{12}\delta_{20}, \delta_{20}\delta_{01},\delta_{01}\delta_{12}\rangle.
\end{eqnarray*}
\end{definition}

\begin{remark} 
\label{rem:kA4}
Let $\Lambda=kQ/I$ be as in Definition \ref{def:A4quiver}.
By \cite[Lemma V.2.4 and Corollary V.2.4.1]{Erdmann1990}, $\Lambda$ is isomorphic to $kG\cong kA_4$. In particular, $\Lambda$ is a symmetric algebra.
Since 
$$\text{socle}(\Lambda)=\langle \delta_{01}\gamma_{10}-\gamma_{02}\delta_{20}, \delta_{12}\gamma_{21}-\gamma_{10}\delta_{01},
\delta_{20}\gamma_{02}-\gamma_{21}\delta_{12}\rangle,$$
we have that 
$$\overline{\Lambda}:=\Lambda/\text{socle}(\Lambda) \cong kQ/(kQ^+)^2$$
is a string algebra, where $(kQ^+)^2$ is the ideal of $kQ$ generated by all paths of length $2$. By \cite[Section II.1.3]{Erdmann1990}, it follows that the finitely generated non-projective indecomposable $\Lambda$-modules coincide with finitely generated the indecomposable $\overline{\Lambda}$-modules, viewed as $\Lambda$-modules by inflation.
\end{remark}

The next notation is a consequence of Remark \ref{rem:kA4}.

\begin{nota}
\label{not:A4indecomposables}
The following infinite list of $\Lambda$-modules provides a complete set of representatives for the isomorphism classes of finitely generated non-projective indecomposable $\Lambda$-modules. In particular, since $\Lambda\cong kG$, this provides a complete set of representatives for the isomorphism classes of finitely generated non-projective indecomposable $kG$-modules.

\begin{enumerate}[(1)]
\item There are precisely three isomorphism classes of simple $\Lambda$-modules, 
which we will denote by $S_0$, $S_1$ and $S_2$. These correspond to the three strings of length zero $e_0$, $e_1$ and $e_2$, respectively, that are associated to the three vertices of $Q$.

\item For each positive integer $n$, there are precisely six isomorphism classes of $\Lambda$-modules of $k$-dimension $2n+1$ corresponding to strings of length $2n$, which we will denote as follows:
\begin{itemize}
\item[(i)]  $M_{2n+1,1,0}$ corresponding to the string 
$\delta_{01}^{-1}\gamma_{02}\delta_{12}^{-1}\gamma_{10}\delta_{20}^{-1}\gamma_{21}\delta_{01}^{-1}\cdots$,
\item[(ii)] $M_{2n+1,1,1}$ corresponding to the string 
$\delta_{12}^{-1}\gamma_{10}\delta_{20}^{-1}\gamma_{21}\delta_{01}^{-1}\gamma_{02}\delta_{12}^{-1}\cdots$,
\item[(iii)] $M_{2n+1,1,2}$ corresponding to the string 
$\delta_{20}^{-1}\gamma_{21}\delta_{01}^{-1}\gamma_{02}\delta_{12}^{-1}\gamma_{10}\delta_{20}^{-1}\cdots$,
\item[(iv)] $M_{2n+1,2,0}$ corresponding to the string 
$\gamma_{02}\delta_{12}^{-1}\gamma_{10}\delta_{20}^{-1}\gamma_{21}\delta_{01}^{-1}\gamma_{02}\cdots$,
\item[(v)] $M_{2n+1,2,1}$ corresponding to the string 
$\gamma_{10}\delta_{20}^{-1}\gamma_{21}\delta_{01}^{-1}\gamma_{02}\delta_{12}^{-1}\gamma_{10}\cdots$,
\item[(vi)] $M_{2n+1,2,2}$ corresponding to the string 
$\gamma_{21}\delta_{01}^{-1}\gamma_{02}\delta_{12}^{-1}\gamma_{10}\delta_{20}^{-1}\gamma_{21}\cdots$.
\end{itemize}

\item For each positive integer $n$, there are precisely six distinct isomorphism classes of $\Lambda$-modules of $k$-dimension $2n$ corresponding to strings of length $2n-1$, which we will denote as follows:
\begin{itemize}
\item[(i)] $N_{2n,0,0}$ corresponding to the string 
$\gamma_{02}^{-1}\delta_{01}\gamma_{21}^{-1}\delta_{20}\gamma_{10}^{-1}\delta_{12}\cdots$,
\item[(ii)] $N_{2n,0,1}$ corresponding to the string 
$\gamma_{10}^{-1}\delta_{12}\gamma_{02}^{-1}\delta_{01}\gamma_{21}^{-1}\delta_{20}\cdots$,
\item[(iii)] $N_{2n,0,2}$ corresponding to the string 
$\gamma_{21}^{-1}\delta_{20}\gamma_{10}^{-1}\delta_{12}\gamma_{02}^{-1}\delta_{01}\cdots$,
\item[(iv)] $N_{2n,\infty,0}$ corresponding to the string 
$\delta_{01}^{-1}\gamma_{02}\delta_{12}^{-1}\gamma_{10}\delta_{20}^{-1}\gamma_{21}\cdots$,
\item[(v)] $N_{2n,\infty,1}$ corresponding to the string 
$\delta_{12}^{-1}\gamma_{10}\delta_{20}^{-1}\gamma_{21}\delta_{01}^{-1}\gamma_{02}\cdots$,
\item[(vi)] $N_{2n,\infty,2}$ corresponding to the string 
$\delta_{20}^{-1}\gamma_{21}\delta_{01}^{-1}\gamma_{02}\delta_{12}^{-1}\gamma_{10}\cdots$
\end{itemize}

\item For each positive integer $n$ and each $\mu \in k^\times$, there is precisely one isomorphism class of $\Lambda$-modules of dimension $6n$ corresponding to the unique band 
$\delta_{01}\gamma_{21}^{-1}\delta_{20}\gamma_{10}^{-1}\delta_{12}\gamma_{02}^{-1}$, which we will denote by $B_{6n,\mu}$.
\end{enumerate}
\end{nota}


\subsection{Induction and restriction of modules for $kH$ and $kG$}
\label{ss:inducerestrict}

\begin{definition} 
\label{def:CD}
Define
$$C:=\gamma_{10}+\gamma_{21}+\gamma_{02} \quad \text{and} \quad D:=\delta_{01}+\delta_{12}+\delta_{20}$$
viewed as elements of $\Lambda$. 
\end{definition}

\begin{remark}
\label{rem:CD}
Let $C$ and $D$ be as in Definition \ref{def:CD}. Then
\begin{equation}
\label{eq:seemtoneed1}
\left\{
\begin{array}{rclrclrcl}
\gamma_{10}&=&e_1Ce_0, \quad &\gamma_{21}&=&e_2Ce_1, \quad &\gamma_{02}&=&e_0Ce_2,\\
\delta_{01}&=&e_0De_1, \quad &\delta_{12}&=&e_1De_2, \quad &\delta_{20}&=&e_2De_0.
\end{array}
\right.
\end{equation}
Moreover, we have
$$C^2=0,\quad D^2=0,\quad\mbox{and}\quad CD-DC=0.$$
In particular, the subalgebra $k[C,D]$ of $\Lambda$ is isomorphic to $kH$.
\end{remark}

The next notation is the analogue of Notation \ref{not:K4indecomposables} for finitely generated non-projective indecomposable $k[C,D]$-modules.

\begin{nota}
\label{not:kCDmodules}
The following infinite list of $k[C,D]$-modules provides a complete set of representatives for the isomorphism classes of finitely generated non-projective indecomposable $k[C,D]$-modules:
\begin{enumerate}[(1)]
\item $k$ corresponding to the unique string of length zero;
\item for each positive integer $n$, 
\begin{itemize}
\item[(i)] $M_{2n+1,1}^{(C,D)}$ corresponding to the string $(D^{-1}C)^n$,
\item[(ii)] $M_{2n+1,2}^{(C,D)}$ corresponding to the string $(CD^{-1})^n$;
\end{itemize}
\item for each positive integer $n$, 
\begin{itemize}
\item[(i)] $N_{2n,0}^{(C,D)}$ corresponding to the string $(C^{-1}D)^{n-1}C^{-1}$,
\item[(ii)] $N_{2n,\infty}^{(C,D)}$ corresponding to the string $(D^{-1}C)^{n-1}D^{-1}$;
\end{itemize}
\item for each positive integer $n$ and each $\mu \in k^\times$, $N_{2n,\mu}^{(C,D)}$ corresponding to the unique band $DC^{-1}$.
\end{enumerate}
\end{nota}

The next remark gives the relationship between $C,~D$ and $A,~B$ from Definition \ref{def:AB}.

\begin{remark} 
\label{rem:AB2}
The element $\rho$ of $G$ acts (on the left) by conjugation on the radical $\mathrm{rad}(kH)$, viewed as a $k$-subalgebra of $kG$. Identifying $kG=\Lambda$, it follows from the proof of \cite[Lemma V.2.4]{Erdmann1990} that $C$ and $D$ lie in $\mathrm{rad}(kH)$ and that they are eigenvectors of this action of $\rho$ with eigenvalues $\zeta$ and $\zeta^2$, respectively, where $\zeta\in k$ is a primitive cube root of unity. Moreover, $\{C,D,CD\}$ is a $k$-basis of $\mathrm{rad}(kH)$.

On the other hand, if $A,~B$ are as in Definition \ref{def:AB}, then $\{A,B,AB\}$ is also $k$-basis of $\text{rad}(kH)$. Using $\tau=\rho\sigma\rho^{-1}$ and $(\sigma\rho)^3=1$, we see that 
$$\rho\tau\rho^{-1}=\rho^2\sigma\rho^{-2}=(\rho^{-1}\sigma\rho^{-1})\rho^{-1} = \sigma\rho\sigma\rho^{-1}=\sigma\tau.$$
We obtain
$$\begin{array}{rcccccl}
\rho A \rho^{-1} &=& \rho\sigma\rho^{-1} -1 &=& \tau-1 &=& B,\\
\rho B \rho^{-1} &=& \rho\tau\rho^{-1}-1 &=& \sigma\tau-1 &=& A+B+AB ,\\
\rho AB \rho^{-1} &=&\multicolumn{3}{c}{B(A+B+AB)} &=& AB.
\end{array}$$
This means that the action of $\rho$ on $\text{rad}(kH)$, with respect to the basis $\{A,B,AB\}$, is given by the matrix
$$\begin{pmatrix}
    0 & 1 & 0\\
    1 & 1 & 0\\
    0 & 1 & 1\\
\end{pmatrix}$$
which has eigenvalues $\zeta$, $\zeta^2$ and $1$.  
Computing the corresponding eigenvectors and noticing that $C$ and $D$ are only unique up to non-zero scalar multiples, we make the following choices from now on:
\begin{eqnarray*}
C&=&\zeta A +\zeta^2 B + AB \quad\mbox{and}\\
D&=&A +\zeta^2 B +\zeta AB,
\end{eqnarray*}
which leads to
$$CD=\zeta AB.$$
We obtain the following change-of-basis matrices:
\begin{eqnarray}
\label{eq:changeAB1}
\begin{pmatrix}
        C\\
        D\\
        CD
    \end{pmatrix}&=&
    \begin{pmatrix}
        \zeta & \zeta^2 & 1\\
        1 & \zeta^2 & \zeta\\
        0 & 0 & \zeta
    \end{pmatrix}
    \begin{pmatrix}
        A\\
        B\\
        AB\\
    \end{pmatrix}\quad\mbox{and}\\
\label{eq:changeAB2}
    \begin{pmatrix}
        A\\
        B\\
        AB
    \end{pmatrix}&=&
    \begin{pmatrix}
        \zeta & \zeta & \zeta^2\\
        \zeta^2 & 1 & \zeta^2\\
        0 & 0 & \zeta^2
    \end{pmatrix}
    \begin{pmatrix}
        C\\
        D\\
        CD
    \end{pmatrix}.
 \end{eqnarray}
\end{remark}

\begin{nota} 
\label{not:arrgh}
Using Notation \ref{not:general} and Definitions \ref{def:A4quiver} and \ref{def:AB}, 
we identify $kG$ with $\Lambda$ and we identify $kH=k[A,B]$ with $k[C,D]$, where we use Equations (\ref{eq:changeAB1}) and (\ref{eq:changeAB2}) to go back and forth between $A,~B$ and $C,~D$.
\end{nota}

Straightforward matrix calculations show the following connection between the finitely generated non-projective indecomposable $kH$-modules with respect to the action of $A,~B$ versus the action of $C,~D$:

\begin{lemma} 
\label{lem:bandconnection}
We use Notation $\ref{not:arrgh}$ together with Notations $\ref{not:K4indecomposables}$ and $\ref{not:kCDmodules}$.
\begin{itemize}
\item[(i)] For all positive integers $n$ and $x\in\{1,2\}$, we have
$$M_{2n+1,x}^{(C,D)} \cong M_{2n+1,x}^{(A,B)}.$$
\item[(ii)] For all positive integers $n$ and $\phi\in k\cup\{\infty\}$, we have
$$N_{2n,\phi}^{(C,D)} \cong N_{2n,\zeta(1+\zeta\phi)(1+\phi)^{-1}}^{(A,B)}.$$
In particular, $N_{2n,0}^{(C,D)} \cong N_{2n,\zeta}^{(A,B)}$ and $N_{2n,\infty}^{(C,D)} \cong N_{2n,\zeta^2}^{(A,B)}$, and
$N_{2n,\zeta^2}^{(C,D)} \cong N_{2n,0}^{(A,B)}$ and
$N_{2n,1}^{(C,D)} \cong N_{2n,\infty}^{(A,B)}$.
\end{itemize}
\end{lemma}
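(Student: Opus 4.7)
The plan is to prove parts (i) and (ii) separately. In both, the key simplification is that on every indecomposable $kH$-module listed in Notations \ref{not:K4indecomposables}(2)--(3) and \ref{not:kCDmodules}(2)--(4), the product $AB$ (equivalently $CD$) acts as zero, so the change of basis formula \eqref{eq:changeAB2} collapses to $A=\zeta C+\zeta D$ and $B=\zeta^2 C+D$ on the module.

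For part (i), I will rely on intrinsic invariants of a $kH$-module. By Notation \ref{not:K4indecomposables}, there are exactly two isomorphism classes of indecomposable $kH$-modules of each odd dimension $2n+1$ (for $n\ge 1$), and they are distinguished by the pair (top-dimension, socle-dimension), both of which are intrinsic and so do not depend on whether we label the radical generators as $A,B$ or as $C,D$. Reading off the string $(D^{-1}C)^n$ for $M_{2n+1,1}^{(C,D)}$ via Remark \ref{rem:stringbandmodules}(ii) gives top-dimension $n+1$ and socle-dimension $n$, matching the invariants of $M_{2n+1,1}^{(A,B)}$ (string $(B^{-1}A)^n$); the string $(CD^{-1})^n$ gives the reverse pair for $M_{2n+1,2}^{(C,D)}$. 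The isomorphisms $M_{2n+1,x}^{(C,D)}\cong M_{2n+1,x}^{(A,B)}$ follow.

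For part (ii), fix $\phi\in k^\times-\{1\}$. On the natural basis of $N_{2n,\phi}^{(C,D)}$ from Remark \ref{rem:stringbandmodules}(iii), the elements $C$ and $D$, viewed as linear maps from the $n$-dimensional top $T$ to the $n$-dimensional socle $S$, are given respectively by the identity matrix $I$ and by the Jordan block $J_n(\phi)$; consequently $A=\zeta J_n(1+\phi)$ and $B=J_n(\zeta^2+\phi)$. Since $1+\phi\ne 0$, $A$ is invertible, and after changing socle coordinates via $A^{-1}$ the module is classified by the conjugacy class of $A^{-1}B=\zeta^2 J_n(1+\phi)^{-1}J_n(\zeta^2+\phi)$. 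Expanding both factors in the nilpotent shift $N$ (in characteristic two, $J_n(a)^{-1}=\sum_{k=0}^{n-1}a^{-(k+1)}N^k$) and using $1+\zeta^2=\zeta$, one finds that $A^{-1}B$ has constant coefficient $\mu=\zeta(1+\zeta\phi)(1+\phi)^{-1}$ and nonzero linear coefficient $\zeta/(1+\phi)^2$. Thus $A^{-1}B$ is conjugate to the single Jordan block $J_n(\mu)$, yielding $N_{2n,\phi}^{(C,D)}\cong N_{2n,\mu}^{(A,B)}$.

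The exceptional values $\phi\in\{0,1,\zeta^2,\infty\}$ are treated by the same calculation: for $\phi\in\{0,\infty\}$ the source is itself a string module with $D$ or $C$ nilpotent, and one repeats the expansion; for $\phi=\zeta^2$, $B$ becomes nilpotent while $A$ remains invertible, giving the string module $N_{2n,0}^{(A,B)}$; for $\phi=1$, $A$ becomes nilpotent while $B$ remains invertible, and swapping roles produces $N_{2n,\infty}^{(A,B)}$. In every case the outcome is consistent with the M\"obius transformation $\phi\mapsto \zeta(1+\zeta\phi)(1+\phi)^{-1}$ of $\mathbb{P}^1_k$. The main obstacle is the case analysis in these four exceptional values, where one must verify that the relevant super-diagonal coefficient is non-vanishing so the resulting operator is a single Jordan block (of the expected size), and then match the output to the correct string or band module.
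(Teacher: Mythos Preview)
Your proof is correct and is exactly the kind of ``straightforward matrix calculation'' the paper invokes without writing out; the paper gives no further detail beyond that phrase, so you are supplying the argument it omits. The one cosmetic slip is that the linear coefficient of $A^{-1}B$ works out to $(1+\phi)^{-2}$ rather than $\zeta(1+\phi)^{-2}$ once the factor $\zeta^2$ from $A^{-1}$ is absorbed, but since only its nonvanishing matters, this does not affect the argument.
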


\begin{remark}
\label{rem:morebandconnection}
Suppose $n$ is a positive integer and $\lambda\in k\cup\{\infty\}$. Define
$$\phi = \phi(\lambda):=\frac{\zeta+\lambda}{\zeta^2+\lambda}.$$
Then $\phi$ is the unique element in $k\cup\{\infty\}$ such that $\lambda = \zeta(1+\zeta\phi)(1+\phi)^{-1}$. It follows that
$$\frac{1+\lambda}{\lambda} = \frac{\zeta(1+\zeta^2\phi)}{1+\zeta\phi}\quad\mbox{and}\quad
\frac{1}{1+\lambda} = \frac{\zeta(1+\phi)}{1+\zeta^2\phi}.$$
Therefore,
if $N_{2n,\phi}^{(C,D)} \cong N_{2n,\lambda}^{(A,B)}$, then
$$N_{2n,\zeta\phi}^{(C,D)} \cong N_{2n,(1+\lambda)\lambda^{-1}}^{(A,B)}\quad\mbox{and}\quad
N_{2n,\zeta^2\phi}^{(C,D)} \cong N_{2n,(1+\lambda)^{-1}}^{(A,B)}.$$
Notice that if $\lambda\in\{\zeta,\zeta^2\}$, then $\lambda=\frac{1+\lambda}{\lambda}=\frac{1}{1+\lambda}$. 
\end{remark}

The next result gives the restrictions of the non-projective indecomposable $kG$-modules to $kH$ and the inductions of the non-projective indecomposable $kH$-modules to $kG$.

\begin{lemma} 
\label{lem:restrictA4toK4}
Suppose Assumption $\ref{ass:general}$ holds, and use Notations $\ref{not:general}$ and $\ref{not:arrgh}$. Consider the indecomposable $kG$-modules listed in Notation $\ref{not:A4indecomposables}$ and the indecomposable $kH$-modules listed in 
Notation $\ref{not:kCDmodules}$. Let $n\in\mathbb{Z}^+$, $x\in \{1,2\}$, $\ast\in\{0,\infty\}$, and $\phi\in k^\times$.
\begin{itemize}
\item[(a)] For $i\in\{0,1,2\}$, we have
\begin{itemize}
\item[(1)] $\mathrm{Res}^G_H S_i \cong k \quad (\mbox{the trivial simple $kH$-module})$,
\item[(2)] $\mathrm{Res}^G_H M_{2n+1,x,i} \cong M_{2n+1,x}^{(C,D)}$,
\item[(3)] $\mathrm{Res}^G_H N_{2n,\ast,i} \cong N_{2n,\ast}^{(C,D)}$, 
\item[(4)] $\mathrm{Res}^G_H B_{6n,\phi^3} \cong N_{2n,\phi}^{(C,D)} \oplus N_{2n,\zeta\phi}^{(C,D)} \oplus N_{2n,\zeta^2\phi}^{(C,D)}$.
\end{itemize}
\item[(b)] We have
\begin{itemize}
\item[(1)] $\mathrm{Ind}_H^G k \cong S_0\oplus S_1\oplus S_2$,
\item[(2)] $\mathrm{Ind}^G_H M_{2n+1,x}^{(C,D)} \cong M_{2n+1,x,0} \oplus M_{2n+1,x,1} \oplus M_{2n+1,x,2}$,
\item[(3)] $\mathrm{Ind}^G_H N_{2n,\ast}^{(C,D)} \cong N_{2n,\ast,0} \oplus N_{2n,\ast,1} \oplus N_{2n,\ast,2}$,
\item[(4)] $\mathrm{Ind}^G_H N_{2n,\phi}^{(C,D)} \cong B_{6n,\phi^3}$.
\end{itemize}
\end{itemize}
\end{lemma}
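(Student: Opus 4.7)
The plan is to prove part (a) by explicit computation on the string/band bases provided by Remark \ref{rem:stringbandmodules}, and then deduce part (b) either by Frobenius reciprocity and a dimension count, or equivalently by applying $\mathrm{Ind}_H^G$ directly and using part (a) as a consistency check.

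For part (a)(1), since $H$ is a $2$-group and $\mathrm{char}(k)=2$, the algebra $kH$ is local with unique simple module $k$, so any one-dimensional $kG$-module restricts to $k$. For parts (a)(2) and (a)(3), I would take the explicit $k$-basis $\{v_0,v_1,\ldots,v_l\}$ of a string module $M(w)$ from Remark \ref{rem:stringbandmodules}(ii), where $w$ is one of the strings listed in Notation \ref{not:A4indecomposables}(2),(3), and compute the action of $C=\gamma_{10}+\gamma_{21}+\gamma_{02}$ and $D=\delta_{01}+\delta_{12}+\delta_{20}$. The key observation is that at each basis vector $v_j$ sitting at a specific vertex of $Q$, only one of the three summands $\gamma_{ab}$ (respectively $\delta_{ab}$) can act non-trivially, so $C$ (resp. $D$) reduces on $v_j$ exactly to the corresponding $\gamma_{ab}$ (resp. $\delta_{ab}$); moreover, a relation $w_j=\gamma_{ab}$ or $w_j=\delta_{ab}^{-1}$ in the $\Lambda$-string translates into a relation $w_j=C$ or $w_j=D^{-1}$ in the $k[C,D]$-string, independently of the starting vertex $i$. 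This immediately matches the strings in Notation \ref{not:kCDmodules}, giving the isomorphisms in (a)(2) and (a)(3).

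For part (a)(4), I would work with the $6n$-dimensional basis of $B_{6n,\phi^3}=M(w,n,\phi^3)$ from Remark \ref{rem:stringbandmodules}(iii), where $w=\delta_{01}\gamma_{21}^{-1}\delta_{20}\gamma_{10}^{-1}\delta_{12}\gamma_{02}^{-1}$. The matrices for each $\gamma_{ab}$ and $\delta_{ab}$ are block matrices with at most one non-zero $n\times n$ block (either $I_n$ or $J_n(\phi^3)$ at the "wrap-around" position). Summing to get $C$ and $D$ yields a $6n\times 6n$ matrix that, after reordering the basis so the three "layers" corresponding to the three vertices of $Q$ are grouped separately, becomes block-diagonal into three $2n\times 2n$ summands; each summand is a band representation for $k[C,D]$ on the unique band $DC^{-1}$, where the scalar parameter of each summand is one of the three cube roots of $\phi^3$, i.e.\ $\phi$, $\zeta\phi$, and $\zeta^2\phi$. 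The main computational obstacle is bookkeeping the wrap-around Jordan block $J_n(\phi^3)$ correctly to see that it distributes as three parameters $\phi,\zeta\phi,\zeta^2\phi$ rather than a single Jordan block; this is where the assumption that $\zeta\in k$ and the $3$-cycle structure of the quiver $Q$ is essential.

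Finally, for part (b), I would deduce each isomorphism from part (a). Statements (b)(1)--(b)(3) follow from Frobenius reciprocity together with the fact that each $kG$-module occurring in part (a)(1)--(a)(3) has exactly one indecomposable summand in each of the three "$i$-labels" and restricts in a way that is independent of $i$; a dimension count ($3\cdot \dim_k M$) then forces the claimed direct sum. For (b)(4), $\mathrm{Ind}_H^G N_{2n,\phi}^{(C,D)}$ has dimension $6n$ and its restriction to $H$, computed via the Mackey decomposition using $\rho$-conjugation on $H$, is $N_{2n,\phi}^{(C,D)}\oplus N_{2n,\zeta\phi}^{(C,D)}\oplus N_{2n,\zeta^2\phi}^{(C,D)}$; comparing with (a)(4) identifies the induced module as $B_{6n,\phi^3}$, since this is the only indecomposable $kG$-module of $k$-dimension $6n$ with that restriction.
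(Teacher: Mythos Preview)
Your treatment of (a)(1)--(a)(3) matches the paper's. For (b)(1)--(b)(3) the paper instead decomposes $\mathrm{Ind}_H^G U$ explicitly using the idempotents $e_0,e_1,e_2$ to produce three submodules $U_0,U_1,U_2$, each restricting to $U$; your Frobenius reciprocity argument is a valid alternative, provided you actually invoke the socle (or top) multiplicities to pin down that each $i\in\{0,1,2\}$ occurs exactly once rather than, say, three copies of the same $i$.

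The genuine divergence is in (a)(4)/(b)(4): you go (a)(4) $\Rightarrow$ (b)(4), while the paper goes (b)(4) $\Rightarrow$ (a)(4). The paper proves (b)(4) by writing an explicit basis for $\mathrm{Ind}_H^G N_{2n,\phi}^{(C,D)}$ via the $e_i$ and using that $J_n(\phi)^3$ is similar to $J_n(\phi^3)$ in characteristic $2$; then (a)(4) follows from Mackey and a dimension count. Your direct route to (a)(4) works but one claim is misleading: reordering the basis of $B_{6n,\phi^3}$ by vertex does \emph{not} make the $k[C,D]$-action block-diagonal, because the band cyclically links the three vertices. What you actually obtain (after normalizing $C$ to act as the identity from top to socle) is that $D$ acts via a single $3n\times 3n$ block-companion matrix with block $J_n(\phi^3)$; you must then show this matrix is cyclic with characteristic polynomial $(T^3-\phi^3)^n$ and hence has Jordan form $J_n(\phi)\oplus J_n(\zeta\phi)\oplus J_n(\zeta^2\phi)$. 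That is the real content of your ``bookkeeping'' step, and it is slightly harder than the fact $J_n(\phi)^3\sim J_n(\phi^3)$ that the paper uses. Finally, your deduction of (b)(4) from (a)(4) needs one more sentence: you must check, via Krull--Schmidt on the restriction and the full list in part (a), that no proper direct sum of $kG$-indecomposables (including projectives) has restriction $N_{2n,\phi}^{(C,D)}\oplus N_{2n,\zeta\phi}^{(C,D)}\oplus N_{2n,\zeta^2\phi}^{(C,D)}$.
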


\begin{proof}
The isomorphisms in parts (a)(1) through (a)(3) follow almost immediately from the observation that the action of each of $\gamma_{10},~\gamma_{21},~\gamma_{02}$ restricts to the action of $C$ and the action of each of $\delta_{01},~\delta_{12},~\delta_{20}$ restricts to the action of $D$. 

For the isomorphism in part (a)(4), we first prove the isomorphism in part (b)(4). By Remark \ref{rem:AB2}, we know that 
\begin{equation}
\label{eq:rhoaction}
\rho C \rho^{-1} = \zeta C\quad \mbox{and}\quad \rho D \rho^{-1} = \zeta^2 D.
\end{equation}
Let $\{b_{1,1},\ldots,b_{1,n},b_{2,1},\ldots,b_{2,n}\}$ be a $k$-basis of $N_{2n,\phi}^{(C,D)}$ on which $C$ and $D$ act as:
$$C\longleftrightarrow\begin{pmatrix}
            0_n & I_n\\
            0_n & 0_n
        \end{pmatrix}\quad \mbox{and} \quad
     D\longleftrightarrow\begin{pmatrix}
            0_n & J_n(\phi)\\
            0_n & 0_n
        \end{pmatrix}.$$
By the proof of \cite[Lemma V.2.4]{Erdmann1990}, the primitive central idempotents of $\Lambda=kG$ are $e_0,~e_1,~e_2$, where
\begin{equation}
\label{eq:A4idempotents}
e_i = 1 + \zeta^{-i}\rho+ \zeta^{-2i}\rho^2,
\end{equation}
which implies $\rho e_i=\zeta^ie_i$, for $i\in\{0,1,2\}$. 
For $j_1\in\{1,2\}$ and $j_2\in\{1,\ldots,n\}$, we have
\begin{eqnarray*}
e_0b_{j_1,j_2} &=& b_{j_1,j_2} + \rho b_{j_1,j_2} + \rho^2 b_{j_1,j_2},\\
e_1b_{j_1,j_2} &=& b_{j_1,j_2} + \zeta^2\rho b_{j_1,j_2} + \zeta\rho^2 b_{j_1,j_2},\\
e_2b_{j_1,j_2} &=& b_{j_1,j_2} + \zeta\rho b_{j_1,j_2} + \zeta^2\rho^2 b_{j_1,j_2}.
\end{eqnarray*}
By (\ref{eq:rhoaction}), we have
$$C\rho = \zeta^2 \rho C \quad\mbox{and}\quad D\rho =\zeta \rho D .$$
This implies, for all $1\le j\le n$,
\begin{equation}
\label{eq:ABactionband}
\left\{ \begin{array}{rcl@{\quad}rcl@{\quad}rcl}
C e_0 b_{2,j} &=& e_1 b_{1,j},&Ce_1 b_{2,j} &=& e_2 b_{1,j}, & C e_2 b_{2,j} &=& e_0 b_{2,j},\\
D e_0 b_{2,j} &=& J_n(\phi)e_2 b_{1,j}, &D e_1 b_{2,j} &=& J_n(\phi)e_0 b_{1,j}, &D e_2 b_{2,j} &=& J_n(\phi)e_1 b_{1,j}.
\end{array}\right.
\end{equation}
Since $\phi\in k^\times$ and since
$$\mathrm{Ind}^G_H N_{2n,\phi}^{(C,D)} = N_{2n,\phi}^{(C,D)} \oplus \rho\, N_{2n,\phi}^{(C,D)} \oplus \rho^2\,N_{2n,\phi}^{(C,D)},$$
we obtain the following ordered $k$-basis for $\mathrm{Ind}^G_H N_{2n,\phi}^{(C,D)}$:
$$\{e_0 b_{1,j}\}_{j=1}^n \cup 
\{J_n(\phi)^2e_1 b_{2,j}\}_{j=1}^n \cup 
\{J_n(\phi)^2e_2 b_{1,j}\}_{j=1}^n \cup 
\{J_n(\phi)e_0 b_{2,j}\}_{j=1}^n \cup 
\{J_n(\phi)e_1 b_{1,j}\}_{j=1}^n \cup
\{e_2 b_{2,j}\}_{j=1}^n.$$
Because $k$ has characteristic two, it is straightforward to show that the matrix $J_n(\phi)^3$ is similar to the matrix $J_n(\phi^3)$.
Using Remark \ref{rem:CD}, and in particular (\ref{eq:seemtoneed1}), this implies the isomorphism in part (b)(4).

By Mackey's Theorem (see, for example, \cite[Lemma III.8.7]{Alperin1986}), every $kH$-module $M$ is a direct summand of $\mathrm{Res}^G_H\,\mathrm{Ind}^G_H M$.
Since $\phi\in k^\times$ and since $\phi^3=(\zeta\phi)^3=(\zeta^2\phi)^3$, it follows that 
$$N_{2n,\phi}^{(C,D)}\oplus N_{2n,\zeta \phi}^{(C,D)} \oplus N_{2n,\zeta^2 \phi}^{(C,D)}$$
must be isomorphic to a direct summand of $\mathrm{Res}^G_H B_{6n,\phi^3}$. Comparing $k$-dimensions, we obtain the isomorphism in part (a)(4).

To show the remaining isomorphisms in parts (b)(1) through (b)(3), let $U$ be one of the indecomposable $kH$-modules $k$ or $M_{2n+1,x}^{(C,D)}$ or $N_{2n,\ast}^{(C,D)}$, respectively, and let $d$ be the $k$-dimension of $U$. Let $\{b_1,\ldots, b_d\}$ be a $k$-basis of $U$ on which $C$ and $D$ act according to Remark \ref{rem:stringbandmodules} using the string $w_U=w_1\cdots w_{d-1}$ associated to $U$, as given in Notation \ref{not:kCDmodules}. In particular, if $U=k$, then $w_U$ is the unique string of length zero. Utilizing the idempotents $e_0$, $e_1$ and $e_2$ from (\ref{eq:A4idempotents}), we see that
$$\{e_i b_j\;:\; i\in\{0,1,2\}, j\in\{1,\ldots,d\} \}$$
is a $k$-basis of $\mathrm{Ind}^G_H U$. 
Similarly to (\ref{eq:ABactionband}), we see that, for $1\le j\le d$,
\begin{equation}
\label{eq:ABactionstring}
\left\{ \begin{array}{rcl@{\quad}rcl@{\quad}rcl}
C e_0 b_j &=& e_1 Cb_j,&Ce_1 b_j &=& e_2 Cb_j, & C e_2 b_j &=& e_0 Cb_j,\\
D e_0 b_j &=& e_2 Cb_j, &D e_1 b_j &=& e_0 Cb_j, &D e_2 b_j &=& e_1 Cb_j,
\end{array}\right.
\end{equation}
where, for $E\in\{C,D\}$,
$$E b_j=\begin{cases}
b_{j-1}&\mbox{ if } j\ge 2 \mbox{ and } w_{j-1}=E,\\
b_{j+1}&\mbox{ if } j\le d-1 \mbox{ and } w_j=E^{-1},\\
0&\mbox{ otherwise}.
\end{cases}$$
Fix $a\in\{0,1,2\}$. Then it follows from (\ref{eq:ABactionstring}) and from (\ref{eq:seemtoneed1}) that, for each $j\in\{2,\ldots,d\}$, there exists a unique $i(a,j)\in\{0,1,2\}$ such that the $k$-span of
$$\{e_ab_1, e_{i(a,2)}b_2, \ldots, e_{i(a,d)}b_d\}$$
is a $kG$-module $U_a$ with $\mathrm{Res}^G_H U_a = U$. Moreover, defining $i(a,1):=a$, we see that, for all $1\le j\le d$, 
$$\{i(0,j), i(1,j), i(2,j)\} = \{0,1,2\}.$$
We obtain
$$\mathrm{Ind}^G_H U = U_0\oplus U_1\oplus U_2.$$
Using parts (a)(1) through (a)(3) together with Notation \ref{not:A4indecomposables}, this proves the isomorphisms in parts (b)(1) through (b)(3).
\end{proof}


\bibliographystyle{plain}

\end{document}